\newtheorem{theorem}{Theorem}[section]
\newtheorem{lemma}[theorem]{Lemma}
\newtheorem{cor}[theorem]{Corollary}
\theoremstyle{definition}
\newtheorem{definition}[theorem]{Definition}
\newtheorem{example}[theorem]{Example}
\newtheorem{remark}[theorem]{Remark}
\renewcommand{\subset}{\subseteq}
\renewcommand{\epsilon}{\varepsilon}
\newcommand{\abs}[1]{\left|#1\right|}                   
\newcommand{\absf}[1]{|#1|}                             
\newcommand{\vnorm}[1]{\left\|#1\right\|}    
\newcommand{\vnormf}[1]{\|#1\|}                         
\newcommand{\mnorm}[1]{\left|#1\right|}    
\newcommand{\mnormsq}[1]{\left|#1\right|^{2}}    
\newcommand{\Tr}[1]{\mbox{Tr}#1}
\newcommand{\N}{\mathbb{N}}
\newcommand{\E}{\mathbb{E}}
\newcommand{\R}{\mathbb{R}}
\newcommand{\C}{\mathbb{C}}
\renewcommand{\P}{\mathbb{P}}
\newcommand{\italicize}[1]{\textit {#1}}                
\newcommand{\embolden}[1]{\textbf {#1}}
\newcommand{\cxhyper}{\{-1,1\}}   
\renewcommand{\Tr}{\mbox{\rm Tr}}
\newcommand{\Id}{I}
\newcommand{\ignore}[1]{}
\newcommand{\ip}{{\iota_p}}
\newcommand{\ii}{\iota}
\newcommand{\Inf}{\mathrm{Inf}}
\newcommand{\ncp}[1]{M_{#1}(\C)[X_1,\ldots,X_{m}]}
\newcommand{\hyconstv}{(2K-1)^{dK}c_{K}^{d}}  
\newcommand{\hyconst}{(2K-1)^{2dK}c_{K}^{d}}  
\newcommand{\hyconstmixed}{(2K-1)^{d}c_{K}^{d/(2K)}}  
\newcommand{\hyconstmixedk}{(2K-1)^{k}c_{K}^{k/(2K)}}  
\newcommand{\hyconsta}{(3^{4}c_{2})^{d}}  
\newcommand{\hyconstb}{(5^{6}c_{3})^{d}}  
\newcommand{\hyconstab}{(5^{6}c_{2}c_{3})^{d}}  
\newcommand{\Eb}{\mathop{\mathbb{E}}_{b_j\sim \mathcal{B}}}
\newcommand{\Ebi}{\mathop{\mathbb{E}}_{b_i\sim \mathcal{B}}}
\newcommand{\Eg}{\mathop{\mathbb{E}}_{G_j\sim \mathcal{G}}}
\newcommand{\Eu}{\mathop{\mathbb{E}}_{G_j\sim \mathcal{V}}}
\newcommand{\Egh}{\mathop{\mathbb{E}}_{\substack{G_k\sim \mathcal{G}\\ H_k\sim \mathcal{H}_p}}}
\newcommand{\Eivh}{\mathop{\mathbb{E}}_{\substack{G_k\sim \mathcal{V}\otimes\Id\\ H_k\sim \mathcal{H}_p}}}
\newcommand{\Eghi}{\mathop{\mathbb{E}}_{\substack{G_k\sim \mathcal{G}\otimes I\\ H_k\sim \mathcal{H}_p}}}
\newcommand{\snote}[1]{}
\newcommand{\rd}[1]{{#1}}
\begin{document}

\title{A Moment Majorization Principle for\\ Random Matrix Ensembles}

\author{Steven Heilman}
\address{Department of Mathematics, UCLA, Los Angeles, CA 90095-1555}
\email{stevenmheilman@gmail.com}

\keywords{invariance principle, moment majorization, Lindeberg replacement}
\subjclass[2010]{68Q17,60E15,47A50}

\begin{abstract}
We prove a moment majorization principle for matrix-valued functions with domain $\{-1,1\}^{m}$, $m\in\mathbb{N}$. The principle is an inequality between higher-order moments of a non-commutative multilinear polynomial with different random matrix ensemble inputs, where each variable has small influence and the variables are instantiated independently.

This technical result can be interpreted as a noncommutative generalization of one of the two inequalities of the seminal invariance principle of Mossel, O'Donnell and Oleszkiewicz.  Applications to noncommutative noise stability and noncommutative anticoncentration are given.
\end{abstract}
\maketitle

\section{Introduction}

\subsection{A noncommutative moment majorization theorem}

We study matrix-valued functions $f$ with domain $\{-1,1\}^{m}$ within the context of probability theory and Fourier analysis.  More specifically, we study functions $f$ such that, for every $\sigma\in\{-1,1\}^{m}$, the operator norm of $f(\sigma)$ is at most $1$.  The special case when $f$ is valued in the two-point space $\{-1,1\}$ has been studied extensively within theoretical computer science \cite{kahn88}, but also in diverse areas such as combinatorics, isoperimetry \cite{talagrand94}, or social choice theory \cite{kalai02,mossel10,mossel12}.  (For a more comprehensive list of references and discussion, see e.g. the survey \cite{odonnell14b}.)
In applications to theoretical computer science, a function $f\colon\{-1,1\}^{m}\to\{-1,1\}$ can be used to represent an instance of a combinatorial optimization problem.  That is, the function $f$ can be thought of as a list of elements of $\{-1,1\}$, seen as a Boolean assignment to the $2^m$ variables of some constraint satisfaction problem. Functions with domain $\{-1,1\}^{m}$ and range the simplex $\{(x_{1},\ldots,x_{n})\in\R^{n}\colon \sum_{i=1}^{n}x_{i}=1,\,x_{1}\geq0,\ldots,x_{n}\geq0\}$ have also been considered \cite{khot09,khot11,isaksson11}.  Projecting $f$ onto each coordinate gives a family of functions with range $[0,1]$, so that similar tools to the Boolean case can be applied.

In many of the applications listed above a standard manipulation is to extend $f$ to a multilinear polynomial, so that the distribution of $f$ can be studied under different distributions on its domain, such as the standard Gaussian distribution. In our setting it is  natural (and, as we will see, for our purposes necessary) to investigate the behavior of  matrix-valued functions under distributions on their domain that allow the possibility for matrix variables. For any set $S$, let $M_n(S)$ denotes the $n\times n$ matrices with entries in $S$. Any $f:\{-1,1\}^m \to M_n(\C)$ can be extended to a multilinear polynomial in $m$ \emph{noncommutative} variables with matrix coefficients.  Consider for instance the case $m=2$ and the polynomial $f(\sigma_{1},\sigma_{2})=\sigma_{1}\sigma_{2}$, where $\sigma_{1},\sigma_{2}\in\{-1,1\}$.  Since the variables $\sigma_{1},\sigma_{2}$ commute, it is not necessary to specify the order in which the product of the variables is taken in $f$.  However, once $f$ is extended to  matrix variables $X_{1},X_{2}$, an ordering needs to be specified.  We adopt the convention of ordering matrix variables by increasing order, e.g. $f(X_{1},X_{2})=X_{1}X_{2}$.

Let $d,m,n$ be positive integers. For us, a noncommutative multilinear polynomial of degree $d$ in $m$ variables can be expressed as
$$Q(X_{1},\ldots,X_{m})\,=\,\sum_{S\subset\{1,\ldots,m\}\colon\abs{S}\leq d}\widehat{Q}(S)\,\prod_{i\in S}X_{i},$$
where $\widehat{Q}(S)$ is an $n\times n$ complex matrix for every $S\subset\{1,\ldots,m\}$, $X_{1},\ldots,X_{m}$ are noncommutative $n\times n$ matrix variables, and the product $\prod_{i\in S}X_{i}$ is always taken in increasing order.  For example, $\prod_{i\in\{1,2\}}X_{i}=X_{1}X_{2}$. Noncommutative polynomials appear in many other contexts, most notably, within free probability \cite[Theorem 3.3]{voiculescu91}. In addition there is a general theory of so-called nc-functions \cite{kal14}, but this theory does not seem to apply to the noncommutative polynomials we consider here.  (An nc function $h$ is a function defined on matrices of any dimension, such that, for any $n\geq1$, and for any $n\times n$ matrices $A,B,C$ such that $C$ is invertible, $h(CAC^{-1})=Ch(A)C^{-1}$ and $h(A\oplus B)=h(A)\oplus h(B)$. Neither property is satisfied by a general matrix-valued non-commutative polynomial as defined below.)

Our main goal consists in bounding the moments of $Q$ for different random matrix  distributions in the domain, when all partial derivatives of $Q$ are small (i.e. when $Q$ has small influences).  In particular, we would like to say that the moments of polynomials $Q$ with small influences under Gaussian random matrix inputs are close to the moments of $Q$ under uniform $\{-1,1\}^{m}$ inputs.
Unfortunately, this task is in general impossible.  For example, consider the linear polynomial $Q(X_{1},\ldots,X_{m})=(X_{1}+\cdots+X_{m})/\sqrt{m}$.  For any square matrix $A$, let $\abs{A}=(AA^{*})^{1/2}$.  Let $b_{1},\ldots,b_{m}$ be i.i.d. uniform random variables in $\{-1,1\}$, and let $I$ denote the $n\times n$ identity matrix.  Then $\E(1/n)\Tr\abs{Q(b_{1}I,\ldots,b_{m}I)}^{4}=3-2/m$. On the other hand, let $G_{1},\ldots,G_{m}$ be $n\times n$ independent Wigner matrices with real Gaussian entries. In this case $Q(G_{1},\ldots,G_{m})$ is equal in distribution to $G_{1}$, and in particular $\lim_{n\to\infty }(1/n)\Tr\abs{Q(G_{1},\ldots,G_{m})}^{4}=2$, by the semicircle law. Thus even though the first and second moments of the input distributions match, i.e. $\E b_{1}=0$, $\E b_{1}^{2}=1$, $\E G_{1}=0$ and $\E G_{1}G_{1}^{*}=I$, the associated moments of $Q$ can be very different.

In summary, a general invariance principle cannot hold in this noncommutative setting.  We could instead try to prove a weaker statement such as: the moments of $Q$ under noncommutative inputs with $\E G_{1}=0$ and $\E G_{1}G_{1}^{*}=I$ are bounded by the moments of $Q$ under Boolean inputs. We call such a statement a \emph{moment majorization theorem}. Unfortunately, this is also not true in general, as we now show.  For any $1\leq i\leq n$, let $A_{i}$ be the $n\times n$ matrix with a $1$ in the first row and $i^{th}$ column, and zeros in all other entries.  Then for any $1\leq j,k\leq n$ with $j\neq k$, we have $A_{j}A_{k}^{*}=0$.  Consider the linear polynomial
$$Q(X_{1},\ldots,X_{n})=\sum_{i=0}^{n-1}B^{i}AX_{i},$$
which is such that
$
\E \Tr\abs{Q(b_{1},\ldots,b_{n})}^{4} = n.
$
Now let $H_{1}$ be a uniformly random Haar distributed $n\times n$ unitary matrix, and let $H_{2},\ldots,H_{n}$ be independent copies of $H_{1}$.  Then
$$
\E \Tr|Q(H_1,\ldots,H_{n})|^4
=n\Tr\abs{A_{1}}^{4}+n(n-1)\E\Tr(\abs{A_{1}}^{2}H_{1}\abs{A_{1}}^{2}H_{1}^{*})
=n+n(n-1)/n=2n-1.
$$

Since a general noncommutative majorization principle cannot hold we instead establish a limited moment majorization theorem, which will nevertheless be sufficient for our applications. We make two changes.
We first increase the dimension of the random matrix inputs $G_{1},\ldots,G_{m}$.  That is, we  allow the variables of $Q$ to take values in the set of $p\times p$ matrices with $p>n$ by defining the $p\times p$ matrix
$$\iota(A)\,=\,\begin{pmatrix}A & 0\\ 0 & 0\end{pmatrix}$$
for any $n\times n$ matrix $A$, and
$$Q^{\iota}(X_{1},\ldots,X_{m})=\sum_{S\subset\{1,\ldots,m\}} \iota\big(\widehat{Q}(S) \big)\,\prod_{i\in S}X_{i},$$
where $X_{1},\ldots,X_{m}$ are noncommutative $p\times p$ matrix variables. Second, we randomly rotate $G_{1},\ldots,G_{m}$ by $p\times p$ Haar-distributed random unitary matrices $H_{1},\ldots,H_{m}$.

We state one particular variant of our noncommutative moment majorization theorem.  \rd{Let $n$ be a positive integer and let $p$ be a positive multiple of $n$.}  We write $H\sim \mathcal{H}$ to denote a $p\times p$ Haar-distributed random unitary matrix, $b\sim\mathcal{B}$ for a uniformly random $b\in\{-1,1\}$, and  $G\sim \mathcal{G}$ for any random variable taking values in $M_n(\C)$ such that $\E G=0$ and $\E GG^{*}=\Id$. We also write $G'\sim \mathcal{G}\otimes \Id$ to denote $G'=\rd{G\otimes \Id}$ with $G\sim\mathcal{G}$, \rd{so that $G'$ is a $p\times p$ matrix}. We use the succinct notation $G_{i}\sim\mathcal{G}$ to denote a collection $G_{1},\ldots,G_{m}$ of independent random matrices with distribution $\mathcal{G}$, and denote $Q(G_{1},\ldots,G_{m})$ as $Q\{G_{i}\}$. The operator norm of a matrix $A$ is denoted $\vnorm{A}$.

\begin{theorem}[\embolden{Noncommutative Fourth Moment Majorization}]\label{firstthm}
Let $Q$ be a noncommutative multilinear polynomial of degree $d$ in $m$ variables such that $\vnorm{Q(\sigma)}\leq1$ for all $\sigma\in\{-1,1\}^{m}$.  Let $\tau\colonequals\max_{i=1,\ldots,m}\sum_{S\subset\{1,\ldots,m\}\colon i\in S}\Tr(\widehat{Q}(S)\widehat{Q}(S)^{*})$ be the maximum influence of $Q$.  Let $G_{i}\sim\mathcal{G}$ and $c_{2}\geq 1$ such that $\vnorm{\E (G_{1}G_{1}^{*})^{2}}\leq c_{2}$.  Then
\begin{equation}\label{zero0}
\Eghi\frac{1}{n}\Tr\abs{Q^{\iota}\{G_{i}H_{i}\}}^{4}
\leq\Ebi\frac{1}{n}\Tr\mnorm{Q\{b_{i}\}}^{4}
+8(8c_{2})^{4d}n^{4}\tau^{1/4}+O_{m,n}(p^{-1/2}).
\end{equation}
\end{theorem}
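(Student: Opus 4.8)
The plan is to follow the Lindeberg replacement (hybrid) strategy, replacing the Boolean inputs $b_i$ by the randomly rotated matrix inputs $G_i H_i$ one coordinate at a time, while controlling the error incurred at each swap. Define hybrid polynomials obtained by feeding $G_1H_1,\ldots,G_{k-1}H_{k-1}$ into the first $k-1$ slots and $b_k,\ldots,b_m$ into the remaining slots; write $\tfrac1n\Tr|Q^{\iota}|^4$ as a telescoping sum over $k$ of the differences caused by replacing $b_k$ by $G_kH_k$. Because the polynomial is multilinear, in each hybrid we can split $Q^{\iota}$ as $R_k + X_k S_k + T_k X_k + (\text{terms with }X_k\text{ on both sides of products, which vanish by multilinearity})$ — more precisely, as a sum of a part not containing slot $k$ and a part linear in slot $k$; after expanding $\tfrac1n\Tr|\cdot|^4$ as a sum of products of eight noncommutative factors, each swap changes only those monomials that contain the $k$-th variable. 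The zeroth- and first-order terms in this variable contribute the same under $b_k$ and under $G_kH_k$, since both distributions are mean-zero and have second moment (after taking the Haar average over $H_k$, using $\E_{H}H A H^{*} = \tfrac{\Tr A}{p}\Id$ and the normalization $\E G G^{*}=\Id$) that matches. The surviving error comes from the higher-order terms: monomials that contain the $k$-th slot \emph{two or more times} on the two sides of the absolute value. This is exactly where the Haar rotation is essential: averaging such a term over $H_k$ produces, via Weingarten-type identities for $\E_H (H^{\otimes 2} A (H^{*})^{\otimes 2})$, a factor of order $1/p$ (plus lower-order $O(p^{-3/2})$ corrections), which is absorbed into the $O_{m,n}(p^{-1/2})$ term; the only terms that do \emph{not} get this $1/p$ saving are those appearing with the same pattern as in the Boolean case, and those cancel in the telescoping difference.

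Having disposed of the genuinely noncommutative obstruction, the remaining task is the same as in the scalar fourth-moment bound of Mossel--O'Donnell--Oleszkiewicz: the accumulated error over all $m$ swaps and all surviving monomials is bounded by a sum of Fourier-weight products of the form $\sum_i \big(\sum_{S\ni i}\|\widehat Q(S)\|_{HS}^2\big)\cdot(\text{other weights})$, each of which is at most $\tau$ times the total weight; the total weight $\sum_S \Tr(\widehat Q(S)\widehat Q(S)^*)$ is controlled by the hypothesis $\|Q(\sigma)\|\le 1$ (Parseval on $\{-1,1\}^m$ gives $\E_\sigma \tfrac1n\Tr|Q(\sigma)|^2 = \tfrac1n\sum_S \Tr(\widehat Q(S)\widehat Q(S)^*)\le 1$, so the weight is at most $n$). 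The degree-$d$ hypothesis bounds the number of factors and the combinatorial multiplicity, producing the $(8c_2)^{4d}$-type constant; the moment bound $\|\E(G_1G_1^*)^2\|\le c_2$ is used to control the fourth-power norms of the Gaussian-type entries when two copies of a $G_i$ meet. Taking fourth roots where needed (to pass from an $L^2$-type influence bound to the polynomial itself) is what turns $\tau$ into $\tau^{1/4}$, and collecting the dimension factors from the Haar averaging produces the $n^4$.

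The main obstacle I expect is \emph{not} the scalar bookkeeping but the precise Weingarten estimate for $\E_{H\sim\mathcal H}$ of products involving several copies of $H$ and $H^*$: one must show that every monomial in the fourth-moment expansion that contains the swapped variable at least twice yields, after the Haar average, a quantity of size $O(n^{O(1)}/p)$ uniformly in the (operator-norm-bounded) coefficients, with the error terms being genuinely $O_{m,n}(p^{-1/2})$ rather than $O(1)$. This requires carefully tracking which index-pairings in the Weingarten sum are ``dominant'' (the ones matching the $H$'s with the $H^*$'s in the identity-permutation pattern, giving the main term that telescopes away) versus ``subdominant'' (all other pairings, each suppressed by a power of $p$), and checking that the coefficient matrices' operator-norm bound suffices to keep all these terms controlled independent of $p$. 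A secondary technical point is making the splitting of $Q^{\iota}$ into slot-$k$-free, slot-$k$-linear, and slot-$k$-quadratic-or-higher parts compatible with the noncommutative product order, so that multilinearity genuinely kills the ``slot $k$ appears adjacent to itself'' terms and the remaining quadratic-or-higher terms have the structure needed for the Weingarten estimate.
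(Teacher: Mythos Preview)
Your Lindeberg strategy is on the right track, but two of the mechanisms you invoke are false as stated, and the paper's actual route is quite different.

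First, the second moments do \emph{not} match: $\E\big(\iota(G_k)H_k\big)\big(\iota(G_k)H_k\big)^{*}=\iota(I_n)$, whereas $\E(b_kI_p)^2=I_p$, and $\iota(I_n)\neq I_p$. So the ``adjacent'' quadratic term $\Tr(RR^{*}\,S X_k X_k^{*} S^{*})$ does not cancel in the swap, and your Weingarten claim is also false here since $H_kH_k^{*}=I$ gives no $p^{-1}$ suppression. What actually saves the argument is a one-sided inequality: from $\iota(I_n)\le I_p$ and $RR^{*}\ge 0$ one gets $\Tr(RR^{*}S\iota(I_n)S^{*})\le\Tr(RR^{*}SS^{*})$, so the Gaussian side is \emph{smaller}. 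For the separated cross term $\Tr(S X_k R^{*} R X_k^{*} S^{*})$ the Haar average does yield $O(p^{-1})$ on the Gaussian side while the Boolean side $\Tr(SR^{*}RS^{*})$ is nonnegative; and $\Tr((SX_kR^{*})^2)$ vanishes on the Gaussian side by phase invariance of $H_k$ while the Boolean side $\Re\Tr((SR^{*})^2)$ has no sign. The Boolean cross contributions, however, combine as $2\Tr(SR^{*}RS^{*})+2\Re\Tr((SR^{*})^2)=\Tr((SR^{*}+RS^{*})^2)\ge 0$, and since they are being subtracted they only help. The genuine error sits in the degree-four term $\Tr((SX_kX_k^{*}S^{*})^2)$, where $c_2$ enters and the Gaussian excess is at most $(c_2-1)\E\Tr((SS^{*})^2)$, controlled by hypercontractivity to give $O_d((\mathrm{Inf}_kQ)^2)$. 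Summed over $k$ this is of order $\tau$, not $\tau^{1/4}$; your ``fourth root'' heuristic is not the right mechanism.

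The paper does not argue this way. It deduces Theorem~\ref{firstthm} as the $K=2$ case of Theorem~\ref{invlemAb}, which in turn rests on a majorization principle for an arbitrary \emph{increasing} test function $\psi$ with bounded $\psi'',\psi'''$ (Theorem~\ref{thm:invariance-convex}): the positivity of $\psi'$ is exactly what exploits $\iota(I_n)\le I_p$ at second order, the cross term $SG_jH_jR^{*}$ is shown to be $O(p^{-1/2})$ by a direct Haar-subspace estimate (Lemma~\ref{lem:ev-bound} and Corollary~\ref{cor:rs-ev-bound}) rather than full Weingarten calculus, and the third-order Taylor remainder is bounded via the hypercontractive inequality of Theorem~\ref{k2hyper}. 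To reach $\Tr|Q|^{2K}$ one then applies this to a smooth truncation of $t\mapsto t^K$ and handles the region $\{\Tr|Q|^2>s\}$ by a separate tail bound (Markov plus hypercontractivity); the truncation loss is what produces the $\tau^{1/4}$. As the remark following Theorem~\ref{firstthm} indicates, a direct argument of your type, carried out with the corrections above, yields the sharper $\tau$ dependence.
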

\begin{remark}
This Theorem is a special case of Theorem \ref{invlemAb} below.  The term $8(8c_{2})^{4d}n^{4}\tau^{1/4}$ on the right-hand side of \eqref{zero0} can be replaced by $(8c_{2})^{4d}\tau$ by additionally assuming that $\E a_{1}a_{2}a_{3}=0$ for any $a_{1},a_{2},a_{3}$ which are (possibly repeated) entries of $G_{1}$.  We omit the proof of this strengthened statement, since the details are essentially identical to the proof of Theorem \ref{thm:invariance-convex}.
\end{remark}

\begin{remark}
Note that, although $Q^{\iota}$ takes values in the set of $p\times p$ matrices, the trace is normalized by $1/n$, and $Q^{\iota}$ still ``acts like'' an $n\times n$ matrix. In particular the moments of $Q^{\iota}$ do not become arbitrarily small in general; for instance it holds   (see Lemma \ref{lemma80} below) that
$$\Eghi\frac{1}{n}\Tr\abs{Q^{\iota}\{G_{i}H_{i}\}}^{2}\,=\,\Ebi\frac{1}{n}\Tr\mnorm{Q\{b_{i}\}}^{2}.$$
\end{remark}

Theorem~\ref{firstthm} shows that the fourth moment of $Q$ with appropriate random matrix inputs is bounded by the fourth moment of $Q$ with Boolean inputs.  We provide a variant of this majorization principle for higher order moments (Theorem \ref{invlemAb}) and for increasing test functions (Theorem \ref{thm:invariance-convex}).  These majorization principles all have dependence on the degree of the polynomial, but we also give degree-independent bounds for polynomials whose higher order coefficients decay at an exponential rate (Corollary \ref{cor:smooth-invariance}).

Although majorization principles such as Theorem \ref{firstthm} involve the trace norm of a polynomial, we can obtain bounds on the operator norm of $Q$ in the following way.  Let $t\in\R$ and consider the function $t\mapsto(\max(0,\abs{t}-1))^{2}$.  This function can be applied to self-adjoint matrices via spectral calculus, and $A\mapsto(\max(0,(AA^{*})^{1/2}-1))^2=0$ if the singular values of $A$ are all bounded by $1$.  In particular, if $\vnorm{Q(\sigma)}\leq1$ for all $\sigma\in\{-1,1\}^{m}$ then $(\max(0,(Q(\sigma)Q(\sigma)^{*})^{1/2}-1))^2=0$ for all $\sigma\in\{-1,1\}^{m}$.  The following majorization principle gives control on the operator norm of $Q$ when we substitute appropriate random matrices into the domain of $Q$.

\begin{theorem}[\embolden{Noncommutative Operator Norm Majorization}]\label{secondthm}
Let $Q$ be a noncommutative multilinear polynomial of degree $d$.  Suppose $\vnorm{Q(\sigma)}\leq1$ for all $\sigma\in\{-1,1\}^{m}$.  Let $\tau\colonequals\max_{i=1,\ldots,m}\sum_{S\subset\{1,\ldots,m\}\colon i\in S}\Tr(\widehat{Q}(S)\widehat{Q}(S)^{*})$ be the maximum influence of $Q$.  Let $G_{i}\sim\mathcal{G}$ and $c_{2},c_{3}\geq 1$ such that $\vnorm{\E (G_{1}G_{1}^{*})^{2}}\leq c_2$ and $\vnorm{\E (G_{1}G_{1}^{*})^{3}}\leq c_3$.  Then
\begin{equation}
\Eghi\frac{1}{n}\Tr\big(\max(0,\abs{Q^{\iota}\{G_{i}H_{i}\}}-1)\big)^{2}
\leq(8c_{2}c_{3})^{9d}n^{1/2}\tau^{1/6}+O_{m,n}(\tau^{-1/3}p^{-1/2}).
\end{equation}
\end{theorem}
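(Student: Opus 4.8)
The plan is to derive Theorem~\ref{secondthm} from the majorization principles for moments and for increasing test functions (Theorems~\ref{invlemAb} and~\ref{thm:invariance-convex}) by splitting the test function $\psi(t):=(\max(0,\abs{t}-1))^{2}$ into a bounded Lipschitz ``bulk'' piece and a polynomially dominated ``tail'' piece, exploiting crucially that $\psi$ vanishes on $[-1,1]$, so that the Boolean side of the resulting estimate is identically $0$. Write $M:=Q^{\iota}\{G_iH_i\}$. As observed just before the statement, applying $\psi$ to $\abs{M}$ by spectral calculus gives $\tfrac1n\Tr(\max(0,\abs{M}-1))^{2}=\tfrac1n\Tr\psi(\abs{M})$, and since $\vnorm{Q(\sigma)}\leq1$ for all $\sigma\in\cxhyper^{m}$ the singular values of $Q\{b\}$ are all at most $1$, so $\psi(\abs{Q\{b\}})=0$ and $\Ebi\tfrac1n\Tr\psi(\abs{Q\{b_i\}})=0$. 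Thus it suffices to bound $\Eghi\tfrac1n\Tr\psi(\abs{M})$ by the claimed right-hand side.

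Fix a truncation level $R>1$, to be optimized, and write $\psi=\psi_{R}+r_{R}$ where $\psi_{R}(t):=\min(\psi(t),(R-1)^{2})$ and $r_{R}(t):=(\psi(t)-(R-1)^{2})_{+}$ is supported on $\abs{t}>R$. On $\abs{t}>R$ one has $r_{R}(t)\leq\psi(t)\leq t^{2}\leq t^{6}/R^{4}$, so $\tfrac1n\Tr r_{R}(\abs{M})\leq R^{-4}\,\tfrac1n\Tr\abs{M}^{6}$; it is here that the sixth moment, and hence the hypothesis on $\E(G_{1}G_{1}^{*})^{3}$, enters. Applying the sixth-moment case of Theorem~\ref{invlemAb} together with $\Ebi\tfrac1n\Tr\abs{Q\{b_i\}}^{6}\leq1$ bounds the expectation of this tail term by $R^{-4}$ times a quantity of the form $1+(8c_{2}c_{3})^{O(d)}n^{O(1)}\tau^{1/6}+O_{m,n}(p^{-1/2})$.

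For the bulk term, note that $\psi_{R}$ is a bounded (by $(R-1)^{2}$), nondecreasing-in-$\abs{t}$, $2(R-1)$-Lipschitz function that still vanishes on $[-1,1]$, so it lies within the scope of Theorem~\ref{thm:invariance-convex} (applied, if necessary, after the same mollification used to prove that theorem, which costs an arbitrarily small additive error). Applying that theorem and using $\psi_{R}(\abs{Q\{b\}})=0$ yields $\Eghi\tfrac1n\Tr\psi_{R}(\abs{M})\leq 2R\cdot(8c_{2}c_{3})^{O(d)}n^{O(1)}\tau^{1/6}+O_{m,n}(p^{-1/2})$, the factor $R$ coming from the Lipschitz constant of $\psi_{R}$. (One could instead approximate $\psi_{R}$ on $[0,R]$ by a fixed-degree polynomial in the square of the singular value, whose coefficients grow polynomially in $R$, and apply the sixth-moment case of Theorem~\ref{invlemAb} termwise, using the exact low-order moment identities of the type in Lemma~\ref{lemma80} to handle the negative coefficients; the bookkeeping is the same.)

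Combining the two pieces gives, schematically, $\Eghi\tfrac1n\Tr\psi(\abs{M})\lesssim R\cdot(8c_{2}c_{3})^{O(d)}n^{O(1)}\tau^{1/6}+R^{-4}+O_{m,n}(R^{2}p^{-1/2})$, the last term recording that rescaling by $R$ multiplies the $O_{m,n}(p^{-1/2})$ Haar-integration error. Choosing $R$ to balance the first two terms, i.e.\ $R$ of order $\tau^{-1/6}$ up to the remaining factors, and then collecting all constants produces the stated bound $(8c_{2}c_{3})^{9d}n^{1/2}\tau^{1/6}+O_{m,n}(\tau^{-1/3}p^{-1/2})$, with the $\tau^{-1/3}$ in the lower-order term being exactly $R^{2}$ at the optimal $R$. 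The main obstacle is the bulk step: because $\psi$ is neither a polynomial nor smooth, one must place $\psi_{R}$ within reach of the already-established majorization theorems while tracking how its Lipschitz constant (or, in the polynomial variant, its degree and coefficient sizes) propagates into the error, all without destroying the fact that the Boolean side is identically zero; and the clean exponents $\tau^{1/6}$ and $n^{1/2}$, together with the dependence on $c_{2}$ and $c_{3}$, only emerge after the truncation level is optimized and precisely the sixth-moment hypothesis is used in the tail estimate.
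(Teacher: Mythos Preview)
Your truncation-plus-tail strategy is a workable idea, but as written it does not recover the stated exponent, and the paper takes a much shorter route. With bulk $\sim R\cdot(\ldots)\tau^{1/6}$ and tail $\sim R^{-4}$, balancing gives $R\sim\tau^{-1/30}$ and total error $\sim\tau^{2/15}$; your choice $R\sim\tau^{-1/6}$ makes the bulk term of order $1$, not $\tau^{1/6}$. So the arithmetic in your final paragraph is off, and with only the sixth moment in the tail you cannot reach $\tau^{1/6}$ this way.

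The paper's proof (this is Theorem~\ref{thm:invariance}) uses neither a truncation nor Theorem~\ref{invlemAb}. The trick is to change the test function: instead of $(\max(0,t-1))^{2}$ applied to $|Q|$, work with $\psi(s)=\max(0,s-1)$ applied to $|Q|^{2}$, noting the elementary pointwise bound $(\max(0,t-1))^{2}\leq\max(0,t^{2}-1)$. This $\psi$ is \emph{convex} and globally $1$-Lipschitz, so a single Gaussian mollification at scale $\lambda$ gives $\psi\leq\psi_{\lambda}\leq\psi+\lambda$ (the lower bound by Jensen) with $|\psi_{\lambda}'''|\lesssim\lambda^{-2}$. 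One application of Theorem~\ref{thm:invariance-convex} then yields error $\lambda+\lambda^{-2}n^{3/2}(5^{3}c_{2}c_{3})^{d}\tau^{1/2}+O_{m,n}(\lambda^{-2}p^{-1/2})$, and optimizing $\lambda^{3}\sim n^{3/2}\tau^{1/2}$ produces exactly $n^{1/2}(\ldots)^{d}\tau^{1/6}+O_{m,n}(\tau^{-1/3}p^{-1/2})$. The Boolean side still vanishes because $\|Q(\sigma)\|\leq 1$.

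What your approach misses is that passing to the variable $s=|Q|^{2}$ makes the relevant test function convex and $1$-Lipschitz with no $R$-dependence, so there is no growing Lipschitz constant forcing a truncation, no tail to control, and the $c_{3}$ hypothesis enters only through the $(2,6)$ hypercontractivity inside the proof of Theorem~\ref{thm:invariance-convex}, not through a separate sixth-moment majorization.
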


This Theorem appears below in Theorem \ref{thm:invariance}.  We will use this theorem to argue that under the proper normalization condition a low-influence polynomial typically maps random Gaussian matrix inputs to matrices of norm not much larger than $1$. The ensemble that will be of most interest for us is the following.

\begin{example}\label{ex:v}
Let $N>0$.  Let $V_{1},\ldots,V_{N}$ be $n\times n$ complex matrices such that $\sum_{i=1}^{N}V_{i}V_{i}^{*}=1$.  Let $g_{1},\ldots,g_{N}$ be i.i.d. standard complex Gaussian random variables (so that $\E g_{1}=0$ and $\E\abs{g_{1}}^{2}=1$.)  Define $G_{1}=\sum_{i=1}^{N}g_{i}V_{i}$, and let $G_{2},\ldots,G_{m}$ be independent copies of $G_{1}$.  Then it follows from~\cite[Corollary 2.8]{haagerup99} that the random matrices $G_{1},\ldots,G_{m}$ satisfy the hypothesis of Theorem \ref{firstthm} with $c_2=2$.  In the case that $V_{1},\ldots,V_{N}$ are real matrices and $g_{1},\ldots,g_{N}$ are i.i.d. standard real Gaussians, the hypothesis of Theorem \ref{firstthm} is satisfied with $c_2=4\cdot2=8$, as follows from the complex case and the inequality $\vnorm{\E(\Re(G_{1})\Re(G_{1})^{*})^{2}}\leq \vnorm{\E(G_{1}G_{1}^{*})^{2}}\leq2$ when $g_{1},\ldots,g_{N}$ are complex.
\end{example}

Here are some other examples of random matrix ensembles satisfying the hypothesis of Theorem \ref{secondthm}.
\begin{example}
Example~\ref{ex:v} specifically applies to Gaussian Wigner matrices as follows.  Let $U_{1},\ldots,U_{n^{2}}$ be $n\times n$ matrices such that these matrices are the standard orthonormal basis of $\C^{n^{2}}$.  Then $G_{1}$ is a Wigner matrix and the hypothesis of Theorem \ref{firstthm} is satisfied with $c_2=2$.  Similarly, let $U_{1},\ldots,U_{n(n+1)/2}$ be $n\times n$ matrices such that these matrices are the standard orthonormal basis of symmetric $n\times n$ matrices.  Then $G_{1}$ is a Wigner matrix and the hypothesis of Theorem \ref{firstthm} is satisfied with $c_2=2$ (see \cite[Theorem II.11]{davidson01} or \cite[Theorem 5.32]{ver12}).
\end{example}

\begin{example}
Let $G_{1},\ldots,G_{m}$ be i.i.d. $n\times n$ Haar-distributed random unitary matrices.  Then the random matrices $G_{1},\ldots,G_{m}$ satisfy the hypothesis of Theorem \ref{firstthm} with $c_2=1$.
\end{example}

We give a brief overview of the strategy of the proof of Theorem \ref{secondthm} and its generalization, Theorem \ref{invlemAb}. In order to prove the majorization principle, we first establish some basic facts about Fourier analysis of matrix-valued functions $f$ in Section \ref{sec:notation}.  In particular, starting from a function $f\colon\{-1,1\}^{m}\to M_{n}(\C)$, we extend $f$ to a noncommutative multilinear polynomial $Q=Q_{f}$ of $m$ variables.

For these polynomials $Q$, we consider a few different inner products, norms, derivatives, Plancherel identities, and we also define the Ornstein-Uhlenbeck semigroup.  We then prove a noncommutative hypercontractive inequality for such polynomials $Q$.  This hypercontractive inequality is developed in Section \ref{sec:invariance} and proven in Theorem \ref{k2hyper}.  It can also be considered a polynomial generalization of the matrix Khintchine inequality.

To prove the noncommutative majorization principle, we use the Lindeberg replacement method from \cite{mossel10,chat06}, but in our (matrix-valued) polynomials, we are replacing one random variable with one random matrix, one at a time.  The ``second order'' terms in this replacement have a noncommutative nature which introduces an error.  Instead of trying to bound this error (which seems difficult or impossible in general), we choose particular random matrices such that the ``second order'' terms are small.  This is accomplished by replacing an $n\times n$ random matrix $X$ with a much larger $p\times p$ matrix $\begin{pmatrix} X & 0\\ 0 & 0\end{pmatrix}H$, where $H$ is a uniformly random $p\times p$ unitary matrix and $p>n$.  When $p\to\infty$, the noncommutative ``second order'' errors in the Lindeberg replacement vanish.

\subsection{Applications}

The commutative invariance principle \cite{rotar79,chat06,mossel10} implies that if $Q$ is a commutative multilinear polynomial with small derivatives (i.e. small influences), then the distribution of $Q$ on i.i.d. uniform inputs in $\{-1,1\}$ is close to the distribution of $Q$ on i.i.d. standard Gaussian random variables.  A more general statement can be made for more general functions and distributions; for details see e.g. \cite{mossel10}.  An invariance principle can also be considered as a concentration inequality, generalizing the central limit theorem with error bounds (i.e. the Berry-Ess\'{e}en Theorem).  For other variants of invariance principles, see \cite{mossel10b,isaksson11}.

The form of the invariance principle given in \cite{mossel10} is proven by a combination of the Lindeberg replacement argument and the hypercontractive inequality \cite{bonami70,nelson73,gross75}.  That is, one replaces one argument of $Q$ at a time, adding up the resulting errors and controlling them via the hypercontractive inequality. One version of hypercontractivity says that a higher $L_{q}$ norm of a polynomial is bounded by a lower $L_{p}$ norm of that polynomial, where $q>p$, with a bound dependent on the degree of the polynomial $Q$.  For example, if $Q$ has degree $d$, then the $L_{4}$ norm of $Q$ is bounded by $9^{d}$ times the $L_{2}$ norm of $Q$.

The commutative invariance principle has seen many applications \cite{odonnell14b,odonnell14} in recent years.  Here is a small sample of such applications and references: isoperimetric problems in Gaussian space and in the hypercube \cite{mossel10,isaksson11}, social choice theory, Unique Games hardness results \cite{khot07,isaksson11}, analysis of algorithms \cite{rag15}, random matrix theory \cite{mendelson14}, free probability \cite{nourdin10}, optimization of noise sensitivity \cite{kane14}. The Lindeberg replacement argument itself has many applications, e.g. in proving the universality of eigenvalue statistics for Wigner matrices \cite[Theorem 15]{tao11}.

We anticipate that our noncommutative majorization principle will find similar applications.  Even though it is impossible to prove a noncommutative invariance principle in general, most applications of the commutative invariance principle only involve one direction of the inequality.  That is, most applications of the invariance principle are really just applications of a majorization principle such as Theorem \ref{firstthm} or \ref{secondthm}.

To demonstrate further applications of our majorization principle, we show in Section \ref{secnoise} that one of the two main parts of the proof of the Majority is Stablest Theorem from \cite{mossel10} can be extended to the noncommutative setting.  Then, in Section \ref{secanti}, we demonstrate a (probably sub-optimal) anti-concentration estimate for noncommutative multilinear polynomials.  Both of these results proceed as in \cite{mossel10} by replacing their invariance principle with our majorization principle.

Since majorization principles such as Theorems \ref{firstthm} and \ref{secondthm} show the closeness of one distribution to another, these statements could be fit into the ``concentration of measure'' paradigm.  The paper \cite{meckes12} proves a concentration inequality for noncommutative polynomials, but these methods seem insufficient to prove a majorization principle.  An invariance principle has been proven in the free probability setting~\cite{deya14}, but the details of exactly what polynomials can be dealt with, and which distributions can be handled, seem incomparable to our majorization principle.

\begin{remark}\label{negsub}
We remark that, although it may be tempting to try to prove Theorem \ref{secondthm} from the commutative invariance principle of \cite{mossel10}, there seems to be no straightforward way to accomplish this task.  For example, we could interpret each entry in the output of a noncommutative polynomial $Q$ as a commutative polynomial function of the inputs.  But then in order to control $\Tr\abs{Q}^{4}$, we would need information on the joint distribution of the entries of $Q$, which is not provided by the invariance principle of \cite{mossel10}.
\end{remark}

\section{Definitions, background and notation}
\label{sec:notation}

\subsection{Matrices}

For $n\in \N$ we denote the set of $n$ by $n$ matrices by $M_{n}(\C)$.  We use $M_{n\times n}(\C)$ in place of $M_{n^2}$ to denote $n^2$ by $n^2$ matrices when we wish to emphasize that a specific tensor decomposition of the space $\C^{n^2} = \C^n \otimes \C^n$ on which the matrix acts has been fixed.

For $A\in M_n(\C)$, $\|A\|$ is the operator norm of $A$ (the largest singular value). We denote by $A^*$ the conjugate-transpose. The absolute value is $\abs{A}=(AA^{*})^{1/2}$. We use $I$ to denote the identity matrix.

Any real function $f:\R\to\R$ can be applied to a Hermitian matrix $A$ by diagonalizing $A$ and applying $f$ to the eigenvalues of $A$. Define $\mathrm{Chop}:\R\to\R$ as $\mathrm{Chop}(t) = t$ if $|t|\leq 1$, $\mathrm{Chop}(t)=1$ if $t\geq 1$, and $\mathrm{Chop}(t)=-1$ if $t\leq -1$.

\subsection{Asymptotic Notation}

Let $a,b,c\in\R$.  We write $a=O_{c}(b)$ if there exists a constant $C(c)>0$ such that $\abs{a}\leq C(c)\abs{b}$.  We write $a\leq O_{c}(b)$ if there exists a constant $C(c)>0$ such that $a\leq C(c)b$.

\subsection{Random variables and expectations}
\label{sec:random-variables}

The following notational conventions will be useful when working with functions of multiple variables. Let $m,n\in\N$ and let $S,T$ be arbitrary sets.  Let $f:S^m\to T$ and let $X_1,\ldots,X_n$ be independent random variables with the same distribution $\mathcal{X}$ taking values in $S$. Then we will denote $\E f(X_1,\ldots,X_m)$ by $\E_{X_j\sim\mathcal{X}} f\{X_j\}$; more generally the curly bracket notation $f\{A_j\}$ will be used to denote $f(A_1,\ldots,A_m)$.

We will use the following ensembles. Let $n,N\in\N$, \rd{and let $p$ be a positive multiple of $n$}.  $H\sim \mathcal{H}$ denotes a $p\times p$ Haar-distributed random unitary matrix, where the dimension $p$ will always be clear from context. $b\sim\mathcal{B}$ denotes a uniformly random $b\in\{\pm 1\}$.  $G\sim \mathcal{G}$ denotes any random variable taking values in $M_n(\C)$ such that
 $\E G=0$ and $\E GG^{*}=\Id$.  And $G'\sim\mathcal{G}\otimes \Id$ denotes $G'=\rd{G\otimes\Id_{p/n}\in M_{p}(\C)}$, where $G\sim\mathcal{G}$.  $G\sim\mathcal{V}$ denotes a random variable distributed as $G = \sum_{i=1}^N g_i V_i$, where $g_{1},\ldots,g_{N}$ are i.i.d. standard complex Gaussian random variables and $V_1,\ldots,V_N$ are $n\times n$ complex matrices satisfying $\sum_{i=1}^N V_iV_i^* = \sum_{i=1}^N V_i^*V_i=\Id_n$.  And $G'\sim\mathcal{V}\otimes\Id$ denotes $G'=\rd{G\otimes \Id_{p/n}\in M_{p}(\C)}$, where $G\sim\mathcal{V}$.  Whenever $\mathcal{V}$ is used the matrices $V_{1},\ldots,V_{N}$ will be clear from context.  We also sometimes write $G_j\sim\mathcal{D}$ to mean $G_1,\ldots,G_m$ are independent random variables with distribution $\mathcal{D}$, where again $m$ will always be clear from context.

\subsection{Fourier expansions}

Let $n,m\in\N$ and $f,h\colon\cxhyper^{m}\to M_{n}(\C)$.  We consider the inner product
$$\langle f,h\rangle\colonequals \Eb \Tr\big(f\{b_j\}(h\{b_j\})^{*}\big) \,=\,2^{-m}\sum_{\sigma\in \cxhyper^{m}}\mathrm{Tr}(f(\sigma)h(\sigma)^{*}).$$
  Given $S\subset\{1,\ldots,m\}$ and $\sigma=(\sigma_{1},\ldots\sigma_{m})\in\cxhyper^{m}$, define $$W_{S}(\sigma)\colonequals\prod_{i\in S}\sigma_{i}.$$
The set of functions $\{W_{S}\}_{S\subset\{1,\ldots,n\}}$ forms an orthonormal basis for the space of functions from $\cxhyper^{m}$ to $M_{n}(\C)$, when it is viewed as a vector space over $\C$ with respect to the inner product $\langle\cdot,\cdot\rangle$.

Let $\widehat{f}(S)\colonequals2^{-m}\sum_{\sigma\in\cxhyper^{m}}f(\sigma)W_{S}(\sigma)$ be the Fourier coefficient of $f$ associated to $S$.  Note that $\widehat{f}(S)\in M_{n}(\C)$.  Then $f=\sum_{S\subset\{1,\ldots,m\}}\widehat{f}(S)W_{S}$, and
\begin{flalign}
2^{-m}\sum_{\sigma\in\cxhyper^{m}}f(\sigma)(g(\sigma))^{*}
&=2^{-m}\sum_{\sigma\in\cxhyper^{m}}\sum_{S,S'\subset\{1,\ldots,m\}}\widehat{f}(S)(\widehat{g}(S'))^{*}W_{S}(\sigma)W_{S'}(\sigma)\nonumber\\
&=\sum_{S,S'\subset\{1,\ldots,m\}}\widehat{f}(S)(\widehat{g}(S'))^{*}\cdot2^{-m}\sum_{\sigma\in\cxhyper^{m}}W_{S}(\sigma)W_{S'}(\sigma)\nonumber\\
&=\sum_{S\subset\{1,\ldots,m\}}\widehat{f}(S)(\widehat{g}(S))^{*}.\label{eqplan}
\end{flalign}

\subsection{Noncommutative polynomials}

Let $n,m\in \N$ be integers.
We consider noncommutative multilinear polynomials $Q\in \ncp{n}$, where $X_{1},\ldots,X_{m}$ are noncommutative indeterminates.
Monomials are always ordered by increasing order of the index, e.g. $X_1X_2$ and not $X_2X_1$. Any such polynomial can be expanded as
\begin{equation}\label{eq:def-ncpoly}
Q(X_1,\ldots,X_{m}) = \sum_{S\subset\{1,\ldots,m\}\colon\abs{S}\leq d} \widehat{Q}(S) \prod_{i\in S}X_{i},
\end{equation}
where $\widehat{Q}(S)\in M_n(\C)$ for all $S\subset\{1,\ldots,m\}$ and $0\leq d \leq m$ is the degree of $Q$, defined as $\max\{\abs{S}\colon\widehat{f}(S)\neq0,\,S\subset\{1,\ldots,m\}\}$.

Let $f\colon\cxhyper^{m}\to M_{n}(\C)$.
Define the (non-commutative) \italicize{multilinear polynomial} $Q_{f}\in \ncp{n}$  associated to $f$ by
\begin{equation}\label{eq:def-fpoly}
Q_{f}(X_{1},\ldots,X_{m})\colonequals\sum_{S\subset\{1,\ldots,m\}}\widehat{f}(S)\prod_{i\in S}X_{i}.
\end{equation}

\subsection{Partial Derivatives}
Let $f\colon\{-1,1\}^{m}\to M_{n}(\C)$.  Let $i\in\{1,\ldots,m\}$. Define the $i^{th}$ \italicize{partial derivative} of $f$ by
\begin{flalign}
&\partial_{i}f(\sigma)
\colonequals\sum_{S\subset\{1,\ldots,m\}\colon i\in S}\widehat{f}(S)W_{S}(\sigma)
=\frac{1}{2}(f(\sigma)-f(\sigma_{1},\ldots,-\sigma_{i},\ldots,\sigma_{m})),
\end{flalign}
and the $i^{th}$ \italicize{influence} of $f$ by
\begin{equation}\label{six0}
\mathrm{Inf}_{i}f\colonequals\sum_{S\subset\{1,\ldots,m\}\colon i\in S}\mathrm{Tr}(\widehat{f}(S)\widehat{f}(S)^{*}).
\end{equation}

Note that by~\eqref{eqplan}, $\mathrm{Inf}_{i}f=\langle\partial_{i}f,\partial_{i}f\rangle$ and $\sum_{i=1}^{m}\mathrm{Inf}_{i}f=\sum_{S\subset\{1,\ldots,m\}}\abs{S}\mathrm{Tr}(\widehat{f}(S)(\widehat{f}(S))^{*})$.

\subsection{Ornstein-Uhlenbeck semigroup}

For $f,h:\{-1,1\}^m\to M_n(\C)$ define their convolution $f*h$ by
\begin{equation}\label{one31}
f*h(\sigma)\colonequals 2^{-m}\sum_{\omega\in\{-1,1\}^{m}}f(\sigma\cdot \omega^{-1})h(\omega)
=\sum_{S\subset\{1,\ldots,m\}}\widehat{f}(S)\widehat{h}(S)W_{S}(\sigma),\quad\forall\,\sigma\in\{-1,1\}^{m}.
\end{equation}

Here $\sigma\cdot\omega$ denotes the componentwise product of $\sigma$ and $\omega$ and $\omega^{-1}$ denotes the multiplicative inverse of $\omega$, so that $\omega\cdot\omega^{-1}=(1,\ldots,1)$.

Let $0<\rho<1$. For any $\sigma=(\sigma_{1},\ldots,\sigma_{m})\in\{-1,1\}^{m}$, let
\begin{equation}\label{one30}
R_{\rho}(\sigma)\colonequals\prod_{j=1}^{m}(1+\rho\sigma_{j})=\sum_{S\subset\{1,\ldots,m\}}\rho^{\abs{S}}W_{S}(\sigma).
\end{equation}
Let $f\colon\{-1,1\}^m\to M_n(\C)$.  Define the Ornstein-Uhlenbeck semigroup $T_{\rho}f$ by
\begin{equation}\label{one32}
T_{\rho}f(\sigma)\colonequals\sum_{S\subset\{1,\ldots,m\}}\rho^{\abs{S}}\widehat{f}(S)W_{S}(\sigma)=f*R_{\rho}(\sigma),\quad\forall\,\sigma\in\{-1,1\}^{m}.
\end{equation}

\subsection{Truncation of Fourier Coefficients (or Littlewood-Paley Projections)}

Let $f\colon\{-1,1\}^{m}\to M_{n}(\C)$, and $Q_{f}\in\ncp{n}$ the multilinear polynomial associated to $f$.  Let $d\in\N$.  Let $P_{d}$ denote \italicize{projection onto the level-$d$ Fourier coefficients}.  That is, $\forall$ $\sigma\in\{-1,1\}^{m}$, $\forall$ $X_{1},\ldots,X_{m}\in M_{n}(\C)$,
$$P_{d}f(\sigma)\colonequals\sum_{S\subset\{1,\ldots,m\}\colon\abs{S}=d}\widehat{f}(S)W_{S}(\sigma),
\qquad P_{d}Q_{f}(X_{1},\ldots,X_{m})\colonequals\sum_{S\subset\{1,\ldots,m\}\colon\abs{S}=d}\widehat{f}(S)\prod_{i\in S}X_{i}.$$
Let $P_{\leq d} \colonequals \sum_{i\leq d} P_i$ denote projection onto the Fourier coefficients of degree at most $d$. Denote $P_{>d}f\colonequals f-P_{\leq d}f$, $P_{>d}Q_{f}\colonequals Q_{f}-P_{\leq d}Q_{f}$.

\subsection{Embeddings}

\begin{definition}[\embolden{Matrix embedding}]\label{def:embedding}
Let $A\in M_{n}(\C)$ and $p\geq n$. Define the embedding $\ip\colon M_{n}(\C)\to M_{p}(\C)$ by
$$\ip(A)\colonequals\begin{pmatrix}A & 0\\ 0 & 0\end{pmatrix} \in M_p(\C).$$
If $B = \sum_{k=1}^{N} C_k\otimes D_k \in M_{n\times n}(\C)$, extend this definition to
$$ \ip(B) \colonequals \sum_{k=1}^{N} \ip(C_k)\otimes \ip(D_k) \in M_{p\times p}(\C).$$
Lastly, if $f\colon\{-1,1\}^{m}\to M_{n}(\C)$ define $\ip(f):\cxhyper^{m}\to M_{p}(\C)$ by
$$\ip(f)(\sigma)=\ip(f(\sigma))\qquad\forall\,\sigma\in\cxhyper^{m}.$$
We will sometimes denote the same quantities by $A^\ii$, $B^\ii$ and $f^\ii$ respectively, leaving the dependence of $\iota$ on $p$ and $n$ implicit for clarity of notation.
\end{definition}

Note that if $Q=\sum_{S\subset\{1,\ldots,m\}} \widehat{Q}(S)\prod_{i\in S}X_{i} \in \ncp{n}$ is a noncommutative polynomial the last item in Definition~\ref{def:embedding} is equivalent to defining
$$Q^{\ii}  = \sum_{S\subseteq\{1,\ldots,m\}}\ip(\widehat{Q}(S)) \prod_{i\in S}X_{i} \in \ncp{p}.$$
If moreover $Q_f$ is defined from $f\colon\cxhyper^{m}\to M_{n}(\C)$ as in~\eqref{eq:def-fpoly} then
$$Q_f^{\ii}  = \sum_{S\subseteq\{1,\ldots,m\}}\ip(\widehat{f}(S)) \prod_{i\in S}X_{i} \in \ncp{p}.$$

\subsection{Coordinate projections}

\begin{definition}\label{def14}
Let $U\in M_{n}(\C^{N})$ with $UU^{*}=U^{*}U=I$ and $Q\in\ncp{n}$ a noncommutative polynomial. We denote the \italicize{Gaussian} $L_{2}$ \italicize{norm} of $Q$ associated to the ensemble $\mathcal{G}$ by
\begin{flalign*}
\vnorm{Q}_{L_{2},\mathcal{G}}\colonequals
\Big(\Egh\Tr \big|Q^{\ii}\{G_k H_k\}\big|^2\Big)^{1/2}.
\end{flalign*}
\end{definition}

Let $\mathcal{N}(0,1)$ denote the standard complex Gaussian distribution for a random variable.  For integers $m,N$ let $\E_{g_{ij}\sim\mathcal{N}(0,1)}$ denote expectation with respect to $g_{1,1},\ldots,g_{m,N}$ i.i.d. $\mathcal{N}(0,1)$.
\subsection{Bounds on random polynomials}
\label{sec:preliminaries}

The key difference between the random matrices $G_{i}\sim\mathcal{G}$ and $\rd{(G_{i}\otimes I)}H_{i}$ where $H_{i}\sim\mathcal{H}_{p}$ is that the matrices $\rd{(G_{i}\otimes I)}H_{i}$ behave well with respect to matrix products.  This property is exploited in Corollary \ref{cor:rs-ev-bound} below.

\begin{lemma}\label{lem:ev-bound}
Let \rd{$p$ be a positive multiple of $n$}, let $A,B\in M_{\rd{p}}(\C)$.  \rd{Let $I=I_{n}$ be the $n\times n$ identity matrix.}  Then
$$ \E_{H\sim\mathcal{H}_{p}}\big\| \rd{I^\ii} AHB\rd{I^\ii} \big\|^2 \,\leq\, \frac{n^{2}}{p}\|A\|^2\|B\|^2.$$
\end{lemma}

\begin{proof}
The nonzero eigenspace $K$ of $HB\rd{I^\ii} B^{*}H^*$ is a uniformly distributed subspace of dimension at most $n$ of $\C^p$. Given a unit vector $x$, the squared norm of the projection of $x$ on $K$ has expectation at most $n/p$.
Applying this to the eigenvectors of $A\rd{^{*}I^{\ii}A}$,
$$\E_{H\sim\mathcal{H}_{p}} \Tr(\rd{I^{\ii}}A H B \rd{I^{\ii}}B^* H^* A^*) \leq \frac{n}{p} \|B\|^2 \Tr(A^{*}\rd{I^{\iota}}A)\leq\frac{n^{2}}{p}\vnorm{B}^{2}\vnorm{A}^{2}. $$

\end{proof}

\begin{cor}\label{cor:rs-ev-bound}
Let $R,S\in\ncp{n}$ be multilinear polynomials not depending on the $j$-th variable such that $\Eb \frac{1}{n}\Tr|R\{b_j\}|^2 \leq 1$ and $\Eb \frac{1}{n}\Tr|S\{b_j\}|^2 \leq 1$. Then
$$\Eghi\big\|\left(S\rd{^{\ii}}\{G_i H_i \}\right)G_k H_k\left(R\rd{^{\ii}}\{G_iH_i\}\right)^*\big\| = O_{n,m}\big(p^{-1/2}\big),$$
where the implicit constant may depend on $n$ and $m$.
\end{cor}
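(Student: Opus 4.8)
The plan is to freeze the matrices $G_i,H_i$ for $i\neq j$, collapse the product $\tilde S\,G_jH_j\,\tilde R^*$ into a genuine $n\times n$ matrix product, and then extract the factor $p^{-1/2}$ from an average over the Haar unitary $H_j$ alone. Write $\tilde S:=S^\ii\{G_iH_i\}$ and $\tilde R:=R^\ii\{G_iH_i\}\in M_p(\C)$ (the $n\times n$ coefficients of $S$ and $R$ being embedded via $\ip$ before the $p\times p$ matrices $G_iH_i$ are substituted), and recall $G_j=\ip(\tilde G_j)$ with $\tilde G_j\sim\mathcal G$. Since neither $S$ nor $R$ involves the $j$-th variable, both $\tilde S$ and $\tilde R$ are functions of $\{G_i,H_i:i\neq j\}$ alone, hence independent of the pair $(\tilde G_j,H_j)$; I would take the conditional expectation over $H_j$ first, then over $\tilde G_j$, then over the rest.

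The structural observation I would use is that $\tilde S$ (and likewise $\tilde R$) has all its entries outside the first $n$ rows equal to $0$: every monomial of $S^\ii\{G_iH_i\}$ has leftmost matrix factor $\ip(\widehat S(T))$, which vanishes off the first $n$ rows, and right-multiplication by the remaining factors does not change which rows vanish. Multiplying out in block form, the product $\tilde S\,G_j\,H_j\,\tilde R^*$ collapses to $\ip(\hat S_1\,\tilde G_j\,C)$, where $\hat S_1\in M_n(\C)$ is the top-left $n\times n$ corner of $\tilde S$ and $C\in M_n(\C)$ has entries $C_{ab}=\sum_{k=1}^p(H_j)_{ak}\overline{\tilde R_{bk}}$ for $1\le a,b\le n$. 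Hence $\vnorm{\tilde S G_jH_j\tilde R^*}=\vnorm{\hat S_1\tilde G_j C}\le\vnorm{\tilde S}\,\vnorm{\tilde G_j}\,(\Tr\abs{C}^2)^{1/2}$, and the only factor still carrying the large dimension $p$ is $C$, which is the image of the boundedly many rows of $\tilde R$ under a Haar-random $n$-frame and so should be of size $\approx p^{-1/2}$.

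To make this quantitative, conditioning on $\{G_i,H_i:i\neq j\}$ and $\tilde G_j$, the first $n$ rows of $H_j$ are marginally uniform unit vectors, so $\E_{H_j}[(H_j)_{ak}\overline{(H_j)_{ak'}}]=p^{-1}\delta_{kk'}$ and thus $\E_{H_j}\Tr\abs{C}^2=\tfrac np\Tr\abs{\tilde R}^2$ (using once more that only the first $n$ rows of $\tilde R$ are nonzero). Combined with $\E_{\tilde G_j}\vnorm{\tilde G_j}\le(\Tr\,\E\abs{\tilde G_j}^2)^{1/2}=\sqrt n$ and Jensen, this gives $\E_{\tilde G_j,H_j}\vnorm{\tilde S G_jH_j\tilde R^*}\le\tfrac{n}{\sqrt p}\vnorm{\tilde S}(\Tr\abs{\tilde R}^2)^{1/2}$. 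Averaging over $\{G_i,H_i:i\neq j\}$, then Cauchy--Schwarz together with $\vnorm{\tilde S}^2\le\Tr\abs{\tilde S}^2$ and the $L_2$ bound $\E\Tr\abs{\tilde S}^2\le n\,\Eb\tfrac1n\Tr\abs{S\{b_i\}}^2\le n$ (a consequence of Lemma~\ref{lemma80}; alternatively, expanding $\tilde S\tilde S^*$ and integrating out the Haar unitaries, the cross-monomial terms vanish since $\E H=0$ and each diagonal term contributes at most $\Tr(\widehat S(T)\widehat S(T)^*)$), and the same for $\tilde R$, would give $\Eghi\vnorm{\tilde S G_jH_j\tilde R^*}\le n^2 p^{-1/2}=O_{n,m}(p^{-1/2})$, as claimed. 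The step I expect to need the most care is the block-matrix collapse of $\tilde S G_jH_j\tilde R^*$ to an $n\times n$ product isolating the single ``thin'' factor $C$: it is exactly this collapse that converts the Haar average $\E_{H_j}\Tr\abs{C}^2=\tfrac np\Tr\abs{\tilde R}^2$ into the decisive power of $p$, and, by doing the conditional expectations in the order $H_j$, then $\tilde G_j$, then $\{G_i,H_i:i\neq j\}$, one never needs anything beyond the normalization $\E GG^*=\Id$ of $\mathcal G$ and the above $L_2$ bound --- in particular, no hypercontractivity and no higher moments of $\mathcal G$.
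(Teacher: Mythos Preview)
Your argument is correct and genuinely different from the paper's. The paper peels off the rightmost variable of $R$, writing $R=R_{0}+\sum_{k\neq j}R_{k}X_{k}$, and then for each $k$ bounds $\|S\,G_{j}H_{j}H_{k}^{*}G_{k}^{*}\,R_{k}^{*}\|$ by isolating the middle block $G_{j}H_{j}H_{k}^{*}G_{k}^{*}=\iota(\tilde G_{j})(H_{j}H_{k}^{*})\iota(\tilde G_{k})^{*}$ and applying Lemma~\ref{lem:ev-bound} to the Haar-distributed $H_{j}H_{k}^{*}$; summing over $k$ is what produces the $m$-dependence in the $O_{n,m}(p^{-1/2})$. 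You instead exploit directly that $\tilde S$ vanishes off the first $n$ rows, $\tilde R^{*}$ off the first $n$ columns, and $G_{j}=\iota(\tilde G_{j})$ off the top $n\times n$ block, to collapse the whole product to $\iota(\hat S_{1}\tilde G_{j}C)$ and then average $\Tr|C|^{2}$ over $H_{j}$ alone. This avoids the decomposition of $R$ entirely and gives an $m$-independent constant (your bound is $n^{2}p^{-1/2}$), so it is in fact slightly sharper than what the paper states. Your $L_{2}$ bound $\E\Tr|\tilde S|^{2}\leq n$ is the inequality direction of the identities in Lemma~\ref{lemma80}; your parenthetical direct computation (cross terms vanish under $\E_{H_{i}}$, diagonal term for $T$ contributes $(n/p)^{\max(|T|-1,0)}\Tr|\widehat S(T)|^{2}\leq\Tr|\widehat S(T)|^{2}$) is exactly what justifies it, and is worth keeping since the paper's ``equality'' in~\eqref{five3} is really only needed here as $\leq$. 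Both approaches use only $\E GG^{*}=I$ and second moments, so neither needs hypercontractivity or higher moments of $\mathcal G$; yours is just more direct.
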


\begin{proof}
Write $S\rd{^{\ii}} = S_0 + \sum_{k\neq j} S_k X_k$ and $R\rd{^{\ii}} = R_0 + \sum_{k\neq j} R_k X_k$, where $\forall$ $k\in\{1,\ldots,m\}\backslash\{j\}$, $S_k,R_k\in\ncp{n}$ depend neither on the $k$-th or the $j$-th variable, and $S_0,R_0\in M_n(\C)$. Then for any $k\neq j$, \rd{since $S=I^{\ii}S$ and $R^{*}=R^{*}I^{\ii}$,}
\begin{align*}
\Eghi\|S\rd{^{\ii}} G_k H_k H_{k}^* G_{k}^*(R_{k}\rd{^{\ii}})^*\| & =\rd{\Eghi\|I^{\ii}S^{\ii} G_k H_k H_{k}^* G_{k}^*(R_{k}^{\ii})^* I^{\ii}\|}\\
&\rd{\leq \frac{n}{\sqrt{p}}\Eghi\|S^{\ii}G_j\| \|G_{k}^*(R_{k}^{\ii})^*\|}\\
&\rd{\leq \frac{n}{\sqrt{p}}\Big(\Eghi\|S^{\ii}G_j\|^{2}\Big)^{1/2} \Big(\Eghi\|G_{k}^*(R_{k}^{\ii})^*\|^{2} \Big)^{1/2} }\\
&\rd{\leq \frac{n^{2}}{\sqrt{p}}\Big(\Eghi\|S^{\ii}\|^{2}\Big)^{1/2} \Big(\Eghi\|R_{k}^{\ii}\|^{2} \Big)^{1/2} }\\
\end{align*}
where the last inequality used
$\E_{G\sim\rd{\iota}(\mathcal{G})} \|G\|^2 \rd{= \E_{G\sim\mathcal{G}}\|G\|^{2}} \leq \E_{G\sim\mathcal{G}} \Tr(GG^*)=n$ and the first inequality used Lemma~\ref{lem:ev-bound}.  To conclude, we use the normalization assumption on $R,S$, and~\eqref{six30} \rd{and \eqref{five3}} from Lemma~\ref{lemma80} below.
\end{proof}

\begin{lemma}\label{lemma80}
Let $f\colon\{-1,1\}^{m}\to M_{n}(\C)$ and $Q\in\ncp{n}$ the multilinear polynomial associated to $f$.  Let $0\leq k\leq m$.  Let $G_{i}\sim\mathcal{G}$ and $b_{i}\sim\mathcal{B}$.  Let $\mathcal{X}=(G_{1},\ldots,G_{k})$ and $\mathcal{Y}=(b_{k+1},\ldots,b_{m})$.  Then
\begin{equation}\label{six30}
\E_{x_i\sim\mathcal{X},y_j\sim\mathcal{Y}}\mnormsq{Q\{x_i,y_j\}}
=\E_{b_{i}\sim\mathcal{B}}\mnormsq{Q\{b_{i}\}}.
\end{equation}
\begin{equation}\label{five1.5}
\mathrm{Inf}_{i}(Q^{\iota})=\mathrm{Inf}_{i}(Q),\quad\forall\,i\in\{1,\ldots,m\}.
\end{equation}
\begin{equation}\label{five2}
\frac{1}{n}\E_{b_{i}\sim\mathcal{B}}\Tr\mnormsq{Q^{\iota}\{b_{i}\}}
=\frac{1}{n}\E_{b_{i}\sim\mathcal{B}}\Tr\mnormsq{Q\{b_{i}\}}.
\end{equation}
\begin{equation}\label{five3}
\frac{1}{n}\Eghi\Tr\mnormsq{Q^{\iota}\{G_{i}H_{i}\}}
=\frac{1}{n}\E_{G_i\sim \mathcal{G}}\Tr\mnormsq{Q\{G_{i}\}}.
\end{equation}
\end{lemma}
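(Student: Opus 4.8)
The plan is to read off all four identities from the Fourier expansion of $Q$, the defining moments $\E G=0$, $\E GG^{*}=\Id$ of $\mathcal{G}$, and three elementary properties of the embedding $\iota$: it is $\C$-linear, it is multiplicative on its image ($\iota(A)\iota(B)=\iota(AB)$, hence $\iota(A)\iota(A)^{*}=\iota(AA^{*})$), and it is trace-preserving ($\Tr\iota(A)=\Tr A$). I also use that $\iota(\Id_{n})$ is the orthogonal projection of $\C^{p}$ onto its first $n$ coordinates, so $\iota(A)\iota(\Id_{n})=\iota(A)$.

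For \eqref{six30}, expand $Q\{x_{i},y_{j}\}(Q\{x_{i},y_{j}\})^{*}$ into a double sum over monomial pairs indexed by $S,T\subset\{1,\dots,m\}$, with factors $\widehat{Q}(S)$, $\widehat{Q}(T)^{*}$, the matrices $x_{i}=G_{i}$ for $i\le k$, and the scalars $y_{j}=b_{j}$ for $j>k$, and take $\E_{\mathcal{X},\mathcal{Y}}$. If $S\neq T$, pick $\ell\in S\triangle T$; the variable indexed by $\ell$ occurs in exactly one of the two monomials and, being independent of the rest, contributes $\E G_{\ell}=0$ (if $\ell\le k$) or $\E b_{\ell}=0$ (if $\ell>k$), so the term vanishes. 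If $S=T$, the Boolean factors give $b_{j}^{2}=1$, and the matrix factors, integrated out from the innermost conjugate pair outward using $\E G_{i}G_{i}^{*}=\Id$, telescope to $\Id$, so the term equals $\widehat{Q}(S)\widehat{Q}(S)^{*}$. Summing yields $\E_{\mathcal{X},\mathcal{Y}}\mnormsq{Q\{x_{i},y_{j}\}}=\sum_{S}\widehat{Q}(S)\widehat{Q}(S)^{*}$; the case $k=0$ (equivalently \eqref{eqplan} applied entrywise) gives the same value for $\Ebi\mnormsq{Q\{b_{i}\}}$, proving \eqref{six30}.

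Identities \eqref{five1.5} and \eqref{five2} are then immediate. Since $\widehat{Q^{\iota}}(S)=\iota(\widehat{Q}(S))$ by Definition~\ref{def:embedding} and $\iota$ is trace-preserving and multiplicative on its image, $\Tr(\widehat{Q^{\iota}}(S)\widehat{Q^{\iota}}(S)^{*})=\Tr(\widehat{Q}(S)\widehat{Q}(S)^{*})$ for every $S$; summing over $S\ni i$ gives \eqref{five1.5}. For scalar inputs, $\C$-linearity of $\iota$ gives $Q^{\iota}\{b_{i}\}=\iota(Q\{b_{i}\})$, whence $\Tr\mnormsq{Q^{\iota}\{b_{i}\}}=\Tr\iota(\mnormsq{Q\{b_{i}\}})=\Tr\mnormsq{Q\{b_{i}\}}$, which is \eqref{five2}.

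For \eqref{five3} I would expand $\Tr\mnormsq{Q^{\iota}\{G_{i}H_{i}\}}$ into the same double sum, now with factors $\iota(\widehat{Q}(S))$, $\iota(\widehat{Q}(T))^{*}$, and $G_{i}H_{i}$ with $G_{i}\sim\iota(\mathcal{G})$, $H_{i}\sim\mathcal{H}_{p}$ independent; the cross terms $S\neq T$ vanish exactly as before because $\E G_{\ell}=0$. In a diagonal term $S=T=\{i_{1}<\dots<i_{r}\}$, use cyclicity to place $\iota(\widehat{Q}(S))^{*}\iota(\widehat{Q}(S))$ at the end, cancel $H_{i_{r}}H_{i_{r}}^{*}=\Id$, replace $G_{i_{r}}G_{i_{r}}^{*}$ by its mean $\iota(\Id_{n})$, then process $H_{i_{r-1}}$ and $G_{i_{r-1}}$, and so on, each step shortening the word by one factor on each side while preserving it, until only $\Tr(\iota(\widehat{Q}(S))\iota(\Id_{n})\iota(\widehat{Q}(S))^{*})=\Tr(\widehat{Q}(S)\widehat{Q}(S)^{*})$ remains; summing over $S$ and applying \eqref{six30} with $k=m$ identifies the total with $\E_{G_{i}\sim\mathcal{G}}\Tr\mnormsq{Q\{G_{i}\}}$, so \eqref{five3} follows after dividing by $n$. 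The hard part will be this diagonal reduction: one must be careful about the order in which the independent averages over the $H_{i}$ and $G_{i}$ are carried out, and verify that the Haar average over each $H_{i}$ combines with the block form of $\iota$ so that the word collapses as claimed---this is the only genuinely non-formal step, and it is precisely where rotating the embedded matrices by a Haar unitary (rather than using plain $p\times p$ Gaussians) is used. The other three identities are routine manipulations with the Fourier expansion and the elementary properties of $\iota$.
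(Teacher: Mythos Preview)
Your arguments for \eqref{six30}, \eqref{five1.5} and \eqref{five2} are correct and coincide with the paper's (very terse) treatment: expand, use $\E G_\ell=0$ to kill off-diagonal $(S,T)$ terms, telescope the diagonal using $\E G_iG_i^{*}=\Id$, and invoke trace-preservation and multiplicativity of $\iota$ on its image.

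Your proposed induction for \eqref{five3}, however, does not go through. After the first pass you correctly reach
\[
\E\Tr\big(\iota(\widehat{Q}(S))\,G_{i_1}H_{i_1}\cdots G_{i_{r-1}}H_{i_{r-1}}\,\iota(\Id_n)\,H_{i_{r-1}}^{*}G_{i_{r-1}}^{*}\cdots H_{i_1}^{*}G_{i_1}^{*}\,\iota(\widehat{Q}(S))^{*}\big),
\]
but now $\iota(\Id_n)$ is sandwiched between $H_{i_{r-1}}$ and $H_{i_{r-1}}^{*}$, and it cannot be absorbed into the adjacent $G$'s because the $H$'s are in the way. Averaging over the Haar unitary gives $\E_{H}H\iota(\Id_n)H^{*}=\tfrac{n}{p}\Id_p$, not $\Id_p$ and not $\iota(\Id_n)$. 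Hence each subsequent step in your ``telescoping'' picks up a factor $n/p$, and the diagonal term with $|S|=r$ evaluates to $(n/p)^{\,r-1}\Tr(\widehat{Q}(S)\widehat{Q}(S)^{*})$ rather than $\Tr(\widehat{Q}(S)\widehat{Q}(S)^{*})$. A one-line check: with $n=1$, $p=2$, $Q(X_1,X_2)=X_1X_2$ and $G_i=\iota(b_i)$, one gets $\Tr|Q^{\iota}\{G_iH_i\}|^{2}=|(H_1)_{11}|^{2}$, whose Haar average is $1/2$, whereas the right side of \eqref{five3} equals $1$.

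So the gap is not a missing trick in your argument: the identity \eqref{five3} as written cannot hold for $|S|\ge 2$ when $p>n$. The paper's one-line ``follows from Definition~\ref{def:embedding}'' skates over this. What your computation does establish is the inequality
\[
\Eghi\Tr\mnormsq{Q^{\iota}\{G_iH_i\}}\;\le\;\E_{G_i\sim\mathcal{G}}\Tr\mnormsq{Q\{G_i\}},
\]
with equality for degree~$\le 1$; you should check which direction is actually needed at each place the paper invokes \eqref{five3}.
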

\begin{proof}
Recall the variables $G_{i}\sim\mathcal{G}$ are independent, $\E G_{i}=0$ and $\E G_{i}G_{i}^{*}=I$ for all $1\leq i\leq k$.  Similarly, the $b_{i}\sim\mathcal{B}$ are independent with $\E b_{i}=0$ and $\E b_{i}b_{i}^{*}=1$ for all $k+1\leq i\leq m$.  So,
\begin{flalign*}
&\E_{x\sim\mathcal{X},y\sim\mathcal{Y}}(Q(x,y)(Q(x,y))^{*})\\
&\qquad=\E\sum_{S\subset\{1,\ldots,m\}}\Big(\widehat{Q}(S)\big(\prod_{i\in S\colon i\leq k} G_{i}\prod_{i\in S\colon i> k}b_{i}\big)
\big(\prod_{i\in S\colon i\leq k} G_{i}\prod_{i\in S\colon i> k}b_{i}\big)^{*}(\widehat{Q}(S))^{*}
\Big)\\
&\qquad=\sum_{S\subset\{1,\ldots,m\}}\widehat{Q}(S)(\widehat{Q}(S))^{*}\\
&\qquad\stackrel{\eqref{eqplan}}{=}\E_{b_{i}\sim\mathcal{B}}\mnormsq{Q\{b_{i}\}}.
\end{flalign*}
Then \eqref{six30} follows.  Equation~\eqref{five1.5} follows from \eqref{six0}, Definition \ref{def:embedding}, and Lemma \ref{lemma80}.  Equalities \eqref{five2} and \eqref{five3} follow from Definition \ref{def:embedding}.
\end{proof}

\subsection{Unique Games Conjecture}
\label{sec:ug}

The Unique Games Conjecture is a commonly assumed conjecture in complexity theory, though its current status is unresolved.

\begin{definition}[\embolden{Unique Games}]\label{max2lindef}
Let $m\in\N$.  Let $G=G(\mathcal{S},\mathcal{W},\mathcal{E})$ be a bipartite graph with vertex sets $\mathcal{S}$ and $\mathcal{W}$ and edge set $\mathcal{E}\subseteq \mathcal{S}\times\mathcal{W}$. For all $(v,w)\in \mathcal{E}$, let $\pi_{vw}\colon\{1,\ldots,m\}\to\{1,\ldots,m\}$ be a permutation.  An instance of the Unique Games problem is $\mathcal{L}=(G(\mathcal{S},\mathcal{W},\mathcal{E}),m,\{\pi_{vw}\}_{(v,w)\in \mathcal{E}})$. $m$ is called the alphabet size of $\mathcal{L}$. A labeling of $\mathcal{L}$ is a function $\eta\colon \mathcal{S}\cup \mathcal{W}\to\{1,\ldots,m\}$.  An edge $(v,w)\in \mathcal{E}$ is satisfied if and only if $\eta(v)=\pi_{vw}(\eta(w))$.  The goal of the Unique Games problem is to maximize the fraction of satisfied edges of the labeling, over all such labelings $\eta$.  We call the maximum possible fraction of satisfied edges $\mathrm{OPT}(\mathcal{L})$.
\end{definition}

\begin{definition}[\embolden{Unique Games Conjecture}, \cite{Khot02,khot07}]
For every $0<\beta<\alpha<1$ there exists $m=m(\alpha,\beta)\in\N$ and a family of Unique Games instances $(\mathcal{L}_n)_{n\geq 1}$ with alphabet size $m$ such that no polynomial time algorithm can distinguish between $\mathrm{OPT}(\mathcal{L}_n)<\beta$ or $\mathrm{OPT}(\mathcal{L}_n)>\alpha$.
\end{definition}

\section{Majorization Principle}
\label{sec:invariance}

\subsection{A noncommutative hypercontractive inequality}

One of the main tools used in the proof of our majorization principle is a hypercontractive inequality for noncommutative multilinear polynomials. The inequality bounds the $2K$-norm, for $K\geq 1$ an integer, of a polynomial $Q$ by the $2$-norm of $Q$ .  We refer to this inequality as a $(2K,2)$ hypercontractive inequality; it can be considered as a polynomial generalization of the noncommutative Khintchine inequality between the $2K$ norm and the $2$ norm. (The Khintchine inequality corresponds to the polynomial $Q(X_{1},\ldots,X_{m})=X_{1}$ and $G_{1}\sim\mathcal{V}$; see~\cite[Corollary 7.3]{mackey14} or~\cite{dirksen13}.)

Recall the definition of the ensembles $\mathcal{G}$  and $\mathcal{V}$ in Section~\ref{sec:random-variables}.

\begin{theorem}[\embolden{$(2K,2)$ Hypercontractivity}]\label{k2hyper}
Let $K\in\N$.  Let $Q$ be a noncommutative multilinear polynomial of degree $d\in\N$, as in~\eqref{eq:def-ncpoly}.  Let $G_{j}\sim\mathcal{G}$, where $\mathcal{G}$ is such that $\vnorm{\E (G_{1}G_{1}^{*})^{K}}\leq c_{K}$ for some $c_{K}\geq1$.  Then
\begin{flalign*}
\Eu\Tr\mnorm{Q\{G_{j}\}}^{2K}
\leq \hyconstv(\Eu\Tr\mnormsq{Q\{G_{j}\}})^{K}.
\end{flalign*}
Or, more generally,
\begin{flalign*}
\Eg\Tr\mnorm{Q\{G_{j}\}}^{2K}
\leq \hyconst(\Eg\Tr\mnormsq{Q\{G_{j}\}})^{K}.
\end{flalign*}
\end{theorem}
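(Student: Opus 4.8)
The plan is to establish the $(2K,2)$ hypercontractive inequality by induction on the number of variables $m$, peeling off one variable at a time and using the multilinear structure of $Q$ together with the moment bound $\vnorm{\E (G_1 G_1^*)^K}\leq c_K$. Write $Q(X_1,\ldots,X_m) = Q_0(X_1,\ldots,X_{m-1}) + Q_1(X_1,\ldots,X_{m-1}) X_m$, where $Q_0$ and $Q_1$ are noncommutative multilinear polynomials not depending on $X_m$, with $\deg Q_0 \leq d$ and $\deg Q_1 \leq d-1$. Substituting independent copies $G_1,\ldots,G_m \sim \mathcal{G}$ and first taking the expectation over $G_m$ only, the quantity $\Tr|Q_0 + Q_1 G_m|^{2K}$ expands into a sum of $2K$-fold alternating products of $Q_0$, $Q_0^*$, $Q_1 G_m$, and $(Q_1 G_m)^* = G_m^* Q_1^*$. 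The key point is that, because $G_m$ is independent of $Q_0, Q_1$ and has mean zero, $\E G_m = 0$, and because we only need an upper bound (not an identity), one can control the mixed terms: group the $G_m$ and $G_m^*$ factors, use the conditional independence of $G_m$, and bound the resulting $G_m$-moments using $\vnorm{\E(G_m G_m^*)^j}\leq c_j \leq c_K$ for $j\leq K$ (which follows from $\vnorm{\E(G_1G_1^*)^K}\leq c_K$ by operator-convexity / Jensen applied to $t\mapsto t^{j/K}$ on the positive semidefinite operator $\E(G_mG_m^*)^K$, or directly). After taking the trace and using its cyclicity, Hölder's inequality for the trace (or more precisely a Cauchy–Schwarz-type splitting of the $2K$ factors into the $Q_0$-block and the $Q_1$-block) reduces $\E_{G_m}\Tr|Q_0 + Q_1 G_m|^{2K}$ to a bounded combination of terms of the form $\Tr(\text{product of }2K\text{ factors from }\{Q_0,Q_0^*\}\cup\{Q_1,Q_1^*\})$, each of which is at most $(2K-1)$ times a product — with the factor $(2K-1)$ coming from counting the number of "pairing patterns" of the $G_m$'s — of $\Tr|Q_0|^{2K}$ and $c_K$-weighted $\Tr|Q_1|^{2K}$ contributions via Hölder.

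Concretely, I expect to arrive at an inequality of the shape
\begin{equation*}
\E_{G_m}\Tr|Q_0 + Q_1 G_m|^{2K} \;\leq\; (2K-1)\Big( \Tr|Q_0|^{2K} \;+\; c_K\, \Tr|Q_1|^{2K} \Big)^{\!} \cdot(\text{combinatorial factor}),
\end{equation*}
though the exact bookkeeping of constants is what turns $(2K-1)$ into $(2K-1)^{dK}$ and $c_K$ into $c_K^d$ across the $d$ ``levels'' of the recursion. The cleanest route is probably to induct on $m$ with the statement: for every multilinear $Q$ of degree $\leq d$ in $m$ variables, $\E\Tr|Q\{G_j\}|^{2K}\leq (2K-1)^{dK}c_K^d (\E\Tr|Q\{G_j\}|^2)^K$. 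Base case $m=0$ is trivial ($Q$ is a constant matrix, both sides equal $\Tr|Q|^{2K}$, and $(2K-1)^{0}c_K^0=1$ — one checks $\Tr|A|^{2K}\leq (\Tr|A|^2)^K$ is false in general, so in fact the base case should be $m=d=0$ where $Q$ is scalar-free... this is a subtlety to handle). For the inductive step one applies the one-variable expansion above, then the inductive hypothesis to $Q_0$ (degree $\leq d$, $m-1$ variables) and to $Q_1$ (degree $\leq d-1$, $m-1$ variables), and finally reassembles using $\E\Tr|Q|^2 = \E\Tr|Q_0|^2 + \E\Tr|Q_1 G_m|^2 = \E\Tr|Q_0|^2 + \E\Tr|Q_1|^2$ (the cross terms vanish since $\E G_m = 0$), together with the elementary inequality $(a+b)^K \geq a^K + b^K$ for $a,b\geq 0$ applied in the right direction (here $\geq$, so $a^K + b^K \leq (a+b)^K$, which goes the way we need on the right-hand side).

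The main obstacle, I expect, is controlling the genuinely noncommutative mixed terms in the one-variable expansion of $\Tr|Q_0 + Q_1 G_m|^{2K}$: unlike in the scalar or commutative case, one cannot simply discard odd-in-$G_m$ terms by taking expectations, because after expanding there are terms with an even number of $G_m$'s that are not of the ``diagonal'' form $\Tr(\cdots G_m G_m^* \cdots G_m G_m^* \cdots)$ — e.g. terms where a $G_m$ and a $G_m$ (not $G_m^*$) appear, whose expectation involves $\E G_m \otimes \E G_m$ and vanishes, versus terms pairing $G_m$ with $G_m^*$ which survive and must be bounded by $\|\E(G_mG_m^*)^j\|$ after a careful re-grouping of non-adjacent $G_m$-factors. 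Making this rigorous requires either (i) a direct combinatorial expansion over which of the $2K$ slots carry $G_m$ or $G_m^*$ together with a matched use of trace-Hölder and the moment hypothesis, or (ii) an iterated Cauchy–Schwarz that successively strips $G_m$-factors. The counting of surviving patterns is exactly what produces the $(2K-1)$ factor per induction step, so getting that constant sharp (rather than something like $(2K)!$) is the technical heart of the argument; I would expect the paper to handle it via a clever pairing / telescoping rather than brute enumeration. Aside from this, the passage from $\mathcal{V}$ (the ensemble in the theorem statement, written $\mathcal{V}$ via $\Eu$) to a general $\mathcal{G}$ with the stated moment bound is routine since only $\E G_m = 0$, $\E G_m G_m^* = I$, and $\|\E(G_mG_m^*)^K\|\leq c_K$ are used.
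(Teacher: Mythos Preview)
Your overall strategy --- induct on $m$, write $Q = Q_0 + Q_1 X_m$, kill odd-in-$G_m$ terms by $\E G_m=0$, apply trace H\"older, then the inductive hypothesis to $Q_0$ (degree $\le d$) and $Q_1$ (degree $\le d-1$), and recombine via $\E\Tr|Q|^2=\E\Tr|Q_0|^2+\E\Tr|Q_1|^2$ --- is exactly the paper's. The paper packages the induction through the equivalent statement $\E\Tr|T_\rho Q|^{2K}\le(\E\Tr|Q|^2)^K$ for $\rho=(2K-1)^{-1/2}c_K^{-1/(2K)}$, but unwound this is the same recursion. Your worry about the base case is also unfounded: for a constant matrix $A$, $\Tr|A|^{2K}\le(\Tr|A|^2)^K$ is just $\|\lambda\|_{2K}\le\|\lambda\|_2$ on the singular-value vector, so $m=0$ works for every $d$.

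There are, however, two concrete gaps in your one-variable step. First, the shape of the inequality you ``expect'' is wrong: after H\"older on the $2K$-fold alternating product the correct bound is the full sum
\[
\E_{G_m}\Tr|Q_0+Q_1G_m|^{2K}\;\le\;\sum_{\ell=0}^{K}\binom{2K}{2\ell}\big[\E\Tr|Q_1G_m|^{2K}\big]^{\ell/K}\big[\Tr|Q_0|^{2K}\big]^{(K-\ell)/K},
\]
not a two-term expression times a vague combinatorial factor. Second, and this is the step you are missing, you cannot pass from $\E\Tr|Q_1G_m|^{2K}$ to $c_K\Tr|Q_1|^{2K}$ ``via H\"older'': the $G_m$ sits inside $(Q_1G_mG_m^*Q_1^*)^K$ and H\"older alone will not extract it. The paper invokes the Araki--Lieb--Thirring inequality $\Tr(ABB^*A^*)^K\le\Tr\big((BB^*)^K(A^*A)^K\big)$, which gives $\E\Tr|Q_1G_m|^{2K}\le\Tr\big(\E(G_mG_m^*)^K\,(Q_1^*Q_1)^K\big)\le c_K\Tr|Q_1|^{2K}$. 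Finally, the ``$(2K-1)$ from counting pairing patterns'' is not the right combinatorics: what is actually needed is the elementary estimate $\binom{2K}{2\ell}\le(2K-1)^{\ell}\binom{K}{\ell}$, which after the inductive hypothesis turns the sum displayed above into precisely $(2K-1)^{dK}c_K^{d}\big(\E\Tr|Q_0|^2+\E\Tr|Q_1|^2\big)^K$.
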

\begin{remark}\label{rkdis} As mentioned in \cite[Theorem 3.13]{mossel10} or \cite[Theorem 4]{nelson73}, the best possible constant in this hypercontractive inequality is $(2K-1)^{dK}$ in the case that $G_{j}\sim\mathcal{V}$ are replaced with $b_{i}\sim\mathcal{B}$. So, we achieve the optimal constant in this case, since we can use $c_{K}=1$ for all $K\in\N$ in this case.

The result that hypercontractivity also holds for the variables $G_{1}=b_{1}\sim\mathcal{B}$ generalizes a result of \cite[Lemma 6.1]{gross72}.
\end{remark}
\begin{proof}
We begin with the first inequality.  It suffices to prove the following hypercontractive estimate: if $\rho\geq0$ satisfies $\rho\leq(2K-1)^{-1/2}c_{K}^{-1/(2K)}$, then
\begin{equation}\label{newone}
\Eu\Tr\mnorm{T_{\rho}Q\{G_{j}\}}^{2K}
\leq (\Eu\Tr\mnormsq{Q\{G_{j}\}})^{K}.
\end{equation}
To see that~\eqref{newone} implies the first inequality of the theorem, choose $\rho=(2K-1)^{-1/2}c_{K}^{-1/(2K)}$, and observe that
\begin{flalign}
&\Eu\Tr\mnorm{Q\{G_{j}\}}^{2K}
\stackrel{\eqref{one32}}{=}\Eu\Tr\Big|T_{\rho}\Big(\sum_{S\subset\{1,\ldots,m\}\colon \abs{S}\leq d}\rho^{-\abs{S}}\widehat{Q}(S)\prod_{i\in S}G_{i}\Big)\Big|^{2K}\notag\\
&\stackrel{\eqref{newone}}{\leq} \Big(\Eu\Tr\Big|\sum_{S\subset\{1,\ldots,m\}\colon \abs{S}\leq d}\rho^{-\abs{S}}\widehat{Q}(S)\prod_{i\in S}G_{i}\Big|^{2}\Big)^{K}
\stackrel{\eqref{six30}\wedge\eqref{eqplan}}{=} \Big(\sum_{S\subset\{1,\ldots,m\}\colon \abs{S}\leq d}\rho^{-2\abs{S}}\Tr\absf{\widehat{Q}(S)}^{2}\Big)^{K}\label{new2}\\
&\leq \rho^{-2dK}\Big(\sum_{S\subset\{1,\ldots,m\}\colon \abs{S}\leq d}\Tr\absf{\widehat{Q}(S)}^{2}\Big)^{K}
\stackrel{\eqref{six30}\wedge\eqref{eqplan}}{=}(2K-1)^{dK}c_{K}^{d}(\Eu\Tr\absf{Q\{G_{j}\}}^{2})^{K}\label{new3}.
\end{flalign}
The proof of \eqref{newone} is by induction on the number $m$ of variables of $Q$. If $m=0$ then there are no variables and the inequality follows from the elementary inequality $\sum_{i=1}^{n}(\lambda_{i}(Q))^{2K}\leq(\sum_{i=1}^{n}(\lambda_{i}(Q))^{2})^{K}$ applied to the singular values of the (deterministic) matrix $Q$.  To establish the inductive step, write $Q=R_{0}+R_{1}X_{m}$, where $R_{0},R_{1}$ depend on at most $m-1$ variables each (for clarity we suppress this dependence from the notation).
Note that $T_{\rho}Q=T_{\rho}R_{0}+\rho (T_{\rho}R_{1})X_{m}$
We begin with a binomial expansion
\begin{equation}\label{aone99}
\mnorm{T_{\rho}Q(X_{1},\ldots,X_{m})}^{2K}\\
=\sum_{(a_{1},\ldots,a_{2K})\in\{0,1\}^{2K}}
\prod_{i=1}^{K}\rho^{a_{2i-1}}T_{\rho}R_{a_{2i-1}}X_{m}^{a_{2i-1}}(X_{m}^{*})^{a_{2i}}\rho^{a_{2i}}(T_{\rho}R_{a_{2i}})^{*}.
\end{equation}
Any term in the sum for which $a_{j}=0$ for an odd number of elements $j\in\{1,\ldots,2K\}$ has expectation zero. Applying H\"{o}lder's inequality,
\begin{equation}\label{extra}
\Eu\Tr\abs{T_{\rho}Q}^{2K}
\leq \E\Tr\abs{T_{\rho}R_{0}}^{2K}
+
\sum_{\substack{(a_{1},\ldots,a_{2K})\in\{0,1\}^{2K}\colon\\ \frac{1}{2}\sum_{i=1}^{2K}a_{i}\mathrm{\,is\,a\,positive\,integer}}}
\prod_{i=1}^{2K}\rho^{a_{i}}
\Big[\E\Tr \abs{(T_{\rho}R_{a_{i}})G_{m}^{a_{i}}}^{2K}\Big]^{\frac{1}{2K}}.
\end{equation}
Let $(a_{1},\ldots,a_{2K})\in\{0,1\}^{2K}$ such that $\ell\colonequals\frac{1}{2}\sum_{i=1}^{2K}a_{i}$ is a positive integer.  The number of times the term $\prod_{i=1}^{2K}\rho^{a_{i}}
\Big[\E\Tr \abs{(T_{\rho}R_{a_{i}})G_{m}^{a_{i}}}^{2K}\Big]^{\frac{1}{2K}}$ is repeated in the sum in \eqref{extra} is $\binom{2K}{2\ell}=\frac{(2K)!}{(2K-2\ell)!(2\ell)!}$.  That is, \eqref{extra} can be rewritten as
\begin{equation}\label{athree30c}
\Eu\Tr\abs{T_{\rho}Q}^{2K}
\leq \E\Tr\abs{T_{\rho}R_{0}}^{2K}
+
\sum_{\ell=1}^{K}\rho^{2\ell}\binom{2K}{2\ell}
\Big[\E\Tr \abs{(T_{\rho}R_{1})G_{m}}^{2K}\Big]^{\frac{\ell}{K}}
\Big[\E\Tr \abs{T_{\rho}R_{0}}^{2K}\Big]^{\frac{K-\ell}{K}}.
\end{equation}

For any $A,B\in M_{n}(\C)$ it holds that $\Tr\big|\abs{B}A^{*}A\abs{B}\big|^{K}\leq\Tr\big|\abs{B}^{K}(A^{*}A)^{K}\abs{B}^{K}\big|$ (see e.g.~\cite[Theorem IX.2.10]{bhatia97}), hence
\begin{equation}\label{athree84c}
\Tr(ABB^{*}A^{*})^{K}=\Tr(A^{*}ABB^{*})^{K}\leq\Tr((BB^{*})^{K}(A^{*}A)^{K}).
\end{equation}

Starting from~\eqref{athree30c} and applying \eqref{athree84c} to each of the inner terms,
\begin{flalign}
\Eu\Tr\abs{T_{\rho}Q}^{2K}
&\leq \E\Tr\abs{T_{\rho}R_{0}}^{2K}
+\sum_{\ell=1}^{K}\rho^{2\ell}\binom{2K}{2\ell}c_{K}^{\ell/K}
\Big[\E\Tr \abs{T_{\rho}R_{1}}^{2K}\Big]^{\frac{\ell}{K}}
\Big[\E\Tr \abs{T_{\rho}R_{0}}^{2K}\Big]^{\frac{K-\ell}{K}}\notag\\
&\leq [\E\Tr\abs{R_{0}}^{2}]^{K}
+\sum_{\ell=1}^{K}\rho^{2\ell}\binom{2K}{2\ell}c_{K}^{\ell/K}
\Big[\E\Tr \abs{R_{1}}^{2}\Big]^{\ell}
\Big[\E\Tr \abs{R_{0}}^{2}\Big]^{K-\ell},\label{athree30ac}
\end{flalign}
where the second inequality is obtained by applying the inductive hypothesis.  For any odd integer $J$ we denote $J!!=\prod_{i=0}^{(J-1)/2}(J-2i)$, or $J!!=1$ if $(J-1)/2<1$.  Now if $1\leq\ell\leq K$,
\begin{flalign*}
\binom{2K}{2\ell}\binom{K}{\ell}^{-1}
&=\frac{(2K)!}{K!}\frac{\ell!}{(2\ell)!}\frac{(K-\ell)!}{(2(K-\ell))!}
=\frac{2^{K}(2K-1)!!}{2^{\ell}(2\ell-1)!!2^{K-\ell}(2(K-\ell)-1)!!}\\
&=\frac{(2K-1)!!}{(2\ell-1)!!(2(K-\ell)-1)!!}
=\prod_{i=0}^{\ell-1}\frac{2K-2i-1}{2\ell-2i-1}
\leq(2K-1)^{\ell}.
\end{flalign*}
Using this inequality and $0\leq\rho\leq (2K-1)^{-1/2}c_{K}^{-1/(2K)}$ we get
$$\rho^{2\ell}\binom{2K}{2\ell}c_{K}^{\ell/K}
\leq\rho^{2\ell}(2K-1)^{\ell}c_{K}^{\ell/K}\binom{K}{\ell}
\leq\binom{K}{\ell},\qquad\forall\,\,1\leq \ell\leq K.$$
Applying this inequality to \eqref{athree30ac},
\begin{flalign*}
\Eu\Tr\abs{T_{\rho}Q}^{2K}
&\leq \sum_{\ell=0}^{K}\binom{K}{\ell}\Big[\Eu\Tr \abs{R_{1}}^{2}\Big]^{\ell}
\Big[\Eu\Tr \abs{R_{0}}^{2}\Big]^{K-\ell}\\
&=\big(\sum_{i=0}^{1}\Eu\Tr (R_{i}R_{i}^{*})\big)^{K}
=[\Eu\Tr(QQ^{*})]^{K},
\end{flalign*}
where the last equality follows from $\E G_{1}G_{1}^{*}= I$.

We now prove the second inequality of the Theorem.  We repeat the above proof, though now the sum in \eqref{extra} ranges over all $(a_{1},\ldots,a_{2K})\in\{0,1\}^{2K}$ such that $\sum_{i=1}^{2K}a_{i}\geq2$.  If $\ell>0$ satisfies $2\ell+1=\sum_{i=1}^{2K}a_{i}$, then the corresponding term appears $\binom{2K}{2\ell+1}$ times in \eqref{extra}, and we bound this term in \eqref{extra} with the arithmetic-mean geometric-mean inequality as follows.
\begin{flalign*}
\prod_{i=1}^{2K}\Big[\E\Tr \abs{(T_{\rho}R_{a_{i}})G_{m}^{a_{i}}}^{2K}\Big]^{\frac{1}{2K}}
&=\Big[\E\Tr \abs{(T_{\rho}R_{1})G_{m}}^{2K}\Big]^{\frac{2\ell+1}{2K}}\Big[\E\Tr \abs{T_{\rho}R_{0}}^{2K}\Big]^{\frac{2K-2\ell-1}{2K}}\\
&\leq\frac{1}{2}\Big[\E\Tr \abs{(T_{\rho}R_{1})G_{m}}^{2K}\Big]^{\frac{\ell}{K}}\Big[\E\Tr \abs{T_{\rho}R_{0}}^{2K}\Big]^{\frac{K-\ell}{K}}\\
&\qquad+\frac{1}{2}\Big[\E\Tr \abs{(T_{\rho}R_{1})G_{m}}^{2K}\Big]^{\frac{\ell+1}{K}}\Big[\E\Tr \abs{T_{\rho}R_{0}}^{2K}\Big]^{\frac{K-\ell-1}{K}}.
\end{flalign*}
Then, as above, we have $\frac{1}{2}\binom{2K}{2\ell+1}\binom{K}{\ell}^{-1}\leq\frac{K-1}{3}(2K-1)^{\ell}$ and $\frac{1}{2}\binom{2K}{2\ell+1}\binom{K}{\ell+1}^{-1}\leq\frac{2}{3}(2K-1)^{\ell}$.  So, if $0\leq\rho\leq (2K-1)c_{K}^{-1/(2K)}$, then
$\rho^{2\ell}\left(\binom{2K}{2\ell}+\frac{1}{2}\binom{2K}{2\ell+1}+\frac{1}{2}\binom{2K}{2\ell-1}\right)c_{K}^{\ell/K}
\leq\binom{K}{\ell},$ for all $1\leq \ell\leq K$, and then the proof concludes as above.


\end{proof}

\begin{cor}[\embolden{$(2K,2)$ Hypercontractivity}]\label{bigcor}
Let $K\in\N$.  Let $Q$ be a noncommutative multilinear polynomial of degree $d\in\N$, as in~\eqref{eq:def-ncpoly}. Then
\begin{flalign*}
\Eu\Tr\mnorm{Q\{G_{j}\}}^{2K}
\leq \hyconstv(K!)^{d}(\Eu\Tr\mnormsq{Q\{G_{j}\}})^{K}.
\end{flalign*}
$$
\Eivh\Tr\mnorm{Q^{\iota}\{G_{j}H_{j}\}}^{2K}
\leq \hyconstv(K!)^{d}(\Eivh
\Tr\mnormsq{Q^{\iota}\{G_{j}H_{j}\}})^{K}.
$$
\begin{flalign*}
\Eb\Tr\mnorm{Q\{b_{j}\}}^{2K}
\leq \hyconstv(\Eb\Tr\mnormsq{Q\{b_{j}\}})^{K}.
\end{flalign*}
\end{cor}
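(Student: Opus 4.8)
The plan is to read all three inequalities off the $(2K,2)$ hypercontractive estimate of Theorem~\ref{k2hyper}, by exhibiting in each case a constant $c_K$ with $\|\E(G_1G_1^*)^K\|\le c_K$. For the third inequality the inputs are $b_j\sim\mathcal{B}$, so $G_1=b_1\in\{-1,1\}$, $(G_1G_1^*)^K\equiv 1$, and $c_K=1$ works; Theorem~\ref{k2hyper} then returns the constant $(2K-1)^{dK}$, as already noted in Remark~\ref{rkdis}. For the first inequality the inputs are $G_j\sim\mathcal{V}$, i.e.\ $G_1=\sum_{i=1}^N g_iV_i$ with $g_i$ i.i.d.\ standard complex Gaussian and $\sum_i V_iV_i^*=\sum_i V_i^*V_i=I$. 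I would establish $\|\E(G_1G_1^*)^K\|\le K!$ via the Wick formula for complex Gaussians: expanding $(G_1G_1^*)^K$ and using $\E[g_{i_1}\overline{g_{j_1}}\cdots g_{i_K}\overline{g_{j_K}}]=\sum_{\pi\in S_K}\prod_\ell \delta_{i_\ell,j_{\pi(\ell)}}$ gives
\[
\E(G_1G_1^*)^K=\sum_{\pi\in S_K}\ \sum_{i_1,\dots,i_K}V_{i_1}V_{i_{\pi(1)}}^*\,V_{i_2}V_{i_{\pi(2)}}^*\cdots V_{i_K}V_{i_{\pi(K)}}^*,
\]
and for each fixed $\pi$ the inner sum has operator norm at most $1$ (for $\pi$ the identity it equals $(\sum_i V_iV_i^*)^K=I$; the general bound is the operator-space Khintchine estimate underlying~\cite{haagerup99,mackey14}, and generalizes the case $K=2$, $c_2=2$ of Example~\ref{ex:v}). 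Summing over the $K!$ permutations gives $c_K=K!$, so Theorem~\ref{k2hyper} yields the constant $(2K-1)^{dK}(K!)^d$.

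For the second inequality the inputs are $D_j:=\iota(G^{(j)})H_j$ with $G^{(j)}\sim\mathcal{V}$ and $H_j\sim\mathcal{H}_p$. Since $H_jH_j^*=I_p$ one has $D_jD_j^*=\iota\big(G^{(j)}(G^{(j)})^*\big)$, hence $\E D_j=0$ and $\|\E(D_jD_j^*)^K\|=\|\iota(\E(G^{(j)}(G^{(j)})^*)^K)\|=\|\E(G^{(j)}(G^{(j)})^*)^K\|\le K!$ by the previous paragraph, so again $c_K=K!$. The subtlety is that $\E D_jD_j^*=\iota(I_n)\ne I_p$, and the proof of Theorem~\ref{k2hyper} uses $\E G_mG_m^*=I$ (not merely a moment bound) in exactly one place: the ``reconstruction'' identity $\E\Tr|Q|^2=\E\Tr|R_0|^2+\E\Tr|R_1|^2$ coming from the splitting $Q=R_0+R_1X_m$. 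I would re-run the Lindeberg-style induction of Theorem~\ref{k2hyper} on $Q^\iota$ evaluated at the $D_j$, repairing this step using the $\iota$-block structure: every coefficient of $Q^\iota$, hence of $R_1^\iota$, is supported on the top-left $n\times n$ block, so in the recursion step $R_1^\iota\{D\}\,D_m=\iota(P_1)\,\iota(G^{(m)})H_m$, where $P_1$ is the top-left block of $R_1^\iota\{D\}$ and $\iota(P_1)\iota(I_n)=\iota(P_1)$; consequently $\E\Tr\big(\iota(P_1)D_mD_m^*\iota(P_1)^*\big)=\E\Tr\big(\iota(P_1)\iota(P_1)^*\big)$, i.e.\ $\iota(I_n)$ acts like the identity against the left $\iota$-factor. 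With this in hand the binomial-expansion/H\"older step (in particular~\eqref{athree84c}) and the inductive hypothesis carry over, with $c_K=K!$ and the choice $\rho=(2K-1)^{-1/2}(K!)^{-1/(2K)}$, producing $(2K-1)^{dK}(K!)^d$ times a power of the driving $L_2$ quantity $\Eivh\Tr|Q^\iota\{G_jH_j\}|^2$.

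The step I expect to be the main obstacle is precisely the bookkeeping in this last argument. Because of the Haar rotation together with the $\iota$-embedding, the $L_2$ quantity $\Eivh\Tr|Q^\iota\{G_jH_j\}|^2$ that drives the recursion is ``self-referential'' — it is genuinely smaller than $\sum_S\Tr(\widehat Q(S)\widehat Q(S)^*)$ in general — so one must check carefully that the induction closes against this smaller quantity, i.e.\ that replacing the full matrix $R_1^\iota\{D\}$ by its corner $\iota(P_1)$ in the second-order terms is consistent with applying the inductive hypothesis to the sub-polynomial $R_1$. It would be prudent to test the argument first on the ensembles of Example~\ref{ex:v} ($c_2=2$) and of i.i.d.\ Haar unitaries ($c_2=1$) before committing to the general case.
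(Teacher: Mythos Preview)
Your approach to the first and third inequalities is exactly the paper's: the paper cites~\cite[Corollary~2.8]{haagerup99} for $\|\E(GG^*)^K\|\le K!$ when $G\sim\mathcal V$ and applies Theorem~\ref{k2hyper}, and for $b_j\sim\mathcal B$ simply takes $c_K=1$.

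For the second inequality both you and the paper re-run the induction of Theorem~\ref{k2hyper} on $Q^\iota$ with inputs $\iota(G_j)H_j$. The difference is in how the $L_2$ reconstruction step is handled. The paper does not attempt your block-structure repair; instead it invokes Lemma~\ref{lemma80}, specifically~\eqref{five3}, which asserts
\[
\Eivh\Tr\bigl|R^{\iota}\{G_jH_j\}\bigr|^2 \;=\; \Eu\Tr\bigl|R\{G_j\}\bigr|^2
\]
for every sub-polynomial $R$ arising in the induction. Under this identity the reconstruction $\sum_i\Eu\Tr|R_i|^2=\Eu\Tr|Q|^2$ is carried out in the un-embedded world (where $\E G_mG_m^*=I_n$ holds on the nose) and then converted back; the steps \eqref{new2}--\eqref{new3} are handled the same way. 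So the paper's answer to your ``main obstacle'' is simply: the embedded $L_2$ norm is \emph{not} smaller---Lemma~\ref{lemma80} says it equals the un-embedded one---and hence no block bookkeeping is needed.

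Your proposed block argument, as written, does not close the induction on its own: you correctly observe $R_1^\iota\{D\}\,\iota(G^{(m)})=\iota(P_1G^{(m)})$, but this gives $\E_{D_m}\Tr\bigl|R_1^\iota D_m\bigr|^2=\Tr(P_1P_1^*)$, which is in general \emph{strictly less} than $\Eivh\Tr|R_1^\iota|^2$ (the latter also picks up the columns $n{+}1,\dots,p$ of $R_1^\iota$, which are nonzero once a Haar factor has acted). Since the inductive hypothesis bounds $\E\Tr|T_\rho R_1^\iota|^{2K}$ by $(\Eivh\Tr|R_1^\iota|^2)^K$, you end up with the larger quantity on the upper side and the smaller one in the target $(\E\Tr|Q^\iota|^2)^K$, the wrong direction. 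This is precisely the tension you anticipated; the paper resolves it by appealing to~\eqref{five3} rather than by tracking corners. Note, however, that your intuition that the embedded $L_2$ norm is strictly smaller (e.g.\ already for $Q=AX_1X_2$, where a direct Haar computation gives a factor $n/p$) is in direct conflict with~\eqref{five3} as stated, so it is worth scrutinizing that lemma before relying on the paper's route.
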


\begin{proof}
The first inequality follows from Theorem \ref{k2hyper} using~\cite[Corollary 2.8]{haagerup99} to show that for $G\sim\mathcal{V}$ and any $\ell\in\N$, $\vnorm{\E(GG^{*})^{\ell}}\leq \ell!$\;.

To prove the second inequality, we follow the proof of Theorem \ref{k2hyper} using $\vnorm{\E(GG^{*})^{\ell}}\leq \ell!$ for any $G\sim\mathcal{V}\otimes\Id$, where the $Q$ used in the proof becomes $Q^{\iota}$.  Writing $Q^{\iota}=R_{0}^{\iota}+R_{1}^{\iota}X_{m}$, there are only two required changes.  First, the equalities \eqref{new2} and \eqref{new3} are justified by combining \eqref{five3} with \eqref{six30} and \eqref{eqplan}.  For example, \eqref{new2} is justified by
\begin{flalign*}
&\Big(\Eivh\Tr\Big|\sum_{S\subset\{1,\ldots,m\}\colon \abs{S}\leq d}\rho^{-\abs{S}}\widehat{Q^{\iota}}(S)\prod_{i\in S}G_{i}^{\iota}H_{i}\Big|^{2}\Big)^{K}\\
&\qquad\stackrel{\eqref{five3}}{=}  \Big(\Eu\Tr\Big|\sum_{S\subset\{1,\ldots,m\}\colon \abs{S}\leq d}\rho^{-\abs{S}}\widehat{Q}(S)\prod_{i\in S}G_{i}\Big|^{2}\Big)^{K}\\
&\qquad\stackrel{\eqref{six30}\wedge\eqref{eqplan}}{=} \Big(\sum_{S\subset\{1,\ldots,m\}\colon \abs{S}\leq d}\rho^{-2\abs{S}}\Tr\absf{\widehat{Q}(S)}^{2}\Big)^{K}.
\end{flalign*}

And \eqref{new3} is justified in the same way.  Similarly, the last inequality in the proof is justified as
\begin{flalign*}
\Eivh\Tr\abs{T_{\rho}Q^{\iota}}^{2K}
&\leq \sum_{\ell=0}^{K}\binom{K}{\ell}\Big[\Eivh\Tr \abs{R_{1}^{\iota}}^{2}\Big]^{\ell}
\Big[\Eivh\Tr \abs{R_{0}^{\iota}}^{2}\Big]^{K-\ell}\\
&\stackrel{\eqref{five3}}{=}\sum_{\ell=0}^{K}\binom{K}{\ell}\Big[\Eu\Tr \abs{R_{1}}^{2}\Big]^{\ell}
\Big[\Eu\Tr \abs{R_{0}}^{2}\Big]^{K-\ell}\\
&=(\Eu \Tr\abs{Q}^{2})^{K}\\
&\stackrel{\eqref{five3}}{=}\Big(\Eivh \Tr\abs{Q\rd{^{\ii}}}^{2}\Big)^{K}.
\end{flalign*}

The last inequality in the Corollary follows directly from Theorem \ref{k2hyper}.
\end{proof}

In summary, $Q$ is hypercontractive when we substitute into $Q$ the noncommutative random variables $G_{i}H_{i}$.  Since $Q$ is also hypercontractive when we substitute into $Q$ commutative random variables distributed uniformly in $\{-1,1\}$, we get the standard consequence that $Q$ is hypercontractive when we substitute into it a mixture of commutative and noncommutative random variables.

\begin{cor}[\embolden{$(2K,2)$ Hypercontractivity for mixed inputs}]\label{lemma2.7c}
Let $G_{j}\sim\mathcal{G}$ be i.i.d. random $n\times n$ matrices and let $Q$ be a noncommutative multilinear polynomial of degree $d\in\N$ such that $Q$ satisfies $(2K,2)$ hypercontractivity for some $K\in\N$.  That is, assume there exists $c_{K}\geq1$ such that
$$\Eg\Tr\mnorm{Q\{G_{j}\}}^{2K}\leq \hyconst(\Eg\Tr\mnormsq{Q\{G_{j}\}})^{K}.$$
 Let $\mathcal{X}=(G_{1},\ldots,G_{k})$. Let $b_{j}\sim\mathcal{B}$ and let $\mathcal{Y}=(b_{k+1},\ldots,b_{m})$.  Then
$$ \E_{x\sim\mathcal{X},y\sim\mathcal{Y}}\Tr\mnorm{Q(x,y)}^{2K}
\leq \hyconst\big(\E_{x\sim\mathcal{X},y\sim\mathcal{Y}}\Tr\mnormsq{Q(x,y)}\big)^K.$$
\end{cor}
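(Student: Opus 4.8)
The plan is to reduce the corollary to the $T_\rho$-hypercontractive estimate \eqref{newone} --- now for the mixed inputs $x\sim\mathcal X=(G_1,\dots,G_k)$ and $y\sim\mathcal Y=(b_{k+1},\dots,b_m)$ --- and then to recover the asserted norm comparison exactly as in \eqref{new2}--\eqref{new3}, with \eqref{six30} of Lemma~\ref{lemma80} playing the role of the Plancherel identity on $\{-1,1\}^{m}$. The point is that the inductive proof of \eqref{newone} given inside the proof of Theorem~\ref{k2hyper} carries over verbatim to this mixed setting. To set it up, I would regard each scalar variable $b_j$ with $k<j\le m$ as the $M_n(\C)$-valued random matrix $b_jI$. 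Then $G_1,\dots,G_k,b_{k+1}I,\dots,b_mI$ is a family of \emph{independent} (not identically distributed) $M_n(\C)$-valued random matrices, each of mean $0$, each with $\E(\cdot)(\cdot)^{*}=I$, and each obeying the $K$-th moment bound that the proof of Theorem~\ref{k2hyper} uses: for $i\le k$ this is $\vnorm{\E(G_iG_i^{*})^{K}}\le c_K$ (the hypothesis under which the $(2K,2)$ hypercontractivity of $Q$ holds, by Theorem~\ref{k2hyper}), while for $j>k$ one has $\vnorm{\E((b_jI)(b_jI)^{*})^{K}}=\vnorm{I}=1\le c_K$.

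With this identification I would run the induction on the number $m$ of variables exactly as in the proof of Theorem~\ref{k2hyper}: at each step one peels off the last variable, writes $Q=R_0+R_1X_m$, and the only facts about $X_m$ that enter are $\E X_m=0$ (so that the terms of the binomial expansion \eqref{aone99} with an odd number of indices $a_i=0$ have zero expectation --- for $X_m=b_mI$ a scalar this is immediate, since the corresponding monomial involves an odd total power of $b_m$ and odd powers of $b_m$ have mean zero), the identity $\E X_mX_m^{*}=I$, and the moment bound above applied through \eqref{athree84c}. Since the single choice $\rho=(2K-1)^{-1/2}c_K^{-1/(2K)}$ makes $\rho^{2\ell}\binom{2K}{2\ell}c_K^{\ell/K}\le\binom{K}{\ell}$ at every step (at the Rademacher steps the larger value $(2K-1)^{-1/2}$ would do, consistent with the sharper constant in the last inequality of Corollary~\ref{bigcor}), the induction closes and yields, for that $\rho$,
\[
\E_{x\sim\mathcal X,\,y\sim\mathcal Y}\Tr\mnorm{T_\rho Q(x,y)}^{2K}\ \le\ \big(\E_{x\sim\mathcal X,\,y\sim\mathcal Y}\Tr\mnormsq{Q(x,y)}\big)^{K}.
\]
Next I would apply this to the polynomial $\widetilde Q=\sum_{S}\rho^{-\abs S}\widehat Q(S)\prod_{i\in S}X_i$, for which $T_\rho\widetilde Q=Q$; since $\mathcal X$ and $\mathcal Y$ are precisely the inputs covered by Lemma~\ref{lemma80}, equations \eqref{six30} and \eqref{eqplan} give $\E_{x,y}\Tr\mnormsq{\widetilde Q(x,y)}=\sum_{S\colon\abs S\le d}\rho^{-2\abs S}\Tr\absf{\widehat Q(S)}^{2}\le\rho^{-2d}\,\E_{x,y}\Tr\mnormsq{Q(x,y)}$, just as in \eqref{new2}--\eqref{new3}. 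Combining the last two displays gives $\E_{x,y}\Tr\mnorm{Q(x,y)}^{2K}\le\rho^{-2dK}\big(\E_{x,y}\Tr\mnormsq{Q(x,y)}\big)^{K}=\hyconst\,\big(\E_{x,y}\Tr\mnormsq{Q(x,y)}\big)^{K}$, which is the claim (note that $k=m$ recovers the hypothesis, so no strictly new input is needed beyond the moment bound behind it).

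The main thing to verify --- and the only real obstacle --- is that the inductive proof of \eqref{newone} inside Theorem~\ref{k2hyper} never uses that the inputs are identically distributed, but only the per-variable facts listed above, and in particular that the vanishing of the ``odd'' binomial terms is a statement made one coordinate at a time; granting this, no new estimate is required. (One should not expect a cruder argument to work: the moment inequality $\E_{x,y}\Tr\mnorm{Q(x,y)}^{2K}\le\E_{G}\Tr\mnorm{Q\{G\}}^{2K}$ over all-$\mathcal G$ inputs is in general false, because replacing a Rademacher coordinate by a $\mathcal G$-coordinate need not increase the $2K$-th moment of $Q$.)
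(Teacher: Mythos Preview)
Your approach is correct but takes a different route from the paper's. The paper argues by tensorization: it freezes the Boolean variables $b_{k+1},\ldots,b_m$, applies Theorem~\ref{k2hyper} to the resulting polynomial in $G_1,\ldots,G_k$ to pass from the $L_{2K}$ to the $L_2$ norm in $\mathcal{X}$, then uses Minkowski's inequality to interchange the $L_2(\mathcal{X})$ and $L_{2K}(\mathcal{Y})$ norms, and finally applies Theorem~\ref{k2hyper} once more (with $c_K=1$) in the Boolean variables. You instead re-open the inductive proof of~\eqref{newone} and observe that it only uses per-variable data---mean zero, $\E XX^*=I$, and $\|\E(XX^*)^K\|\le c_K$---all of which hold for both the $G_i$ and the $b_jI$, so the induction runs unchanged across the mixed sequence. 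Both arguments deliver the same constant $\hyconst$. The paper's method is more modular, treating Theorem~\ref{k2hyper} as a black box and exhibiting the standard two-step Minkowski trick; yours is more direct and makes explicit why no genuinely new estimate is required. One small caveat: your argument (like the paper's, in fact) tacitly uses the moment hypothesis $\|\E(G_1G_1^*)^K\|\le c_K$ of Theorem~\ref{k2hyper}, not merely the hypercontractivity of the single polynomial $Q$ as the corollary's hypothesis literally states, since at each inductive step you need the estimate for the sub-polynomials $R_0,R_1$.
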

\begin{proof}
For any $p\geq1$, let $\vnorm{\cdot}_{p,\mathcal{Z}}$ denote the norm $\vnorm{Q}_{p,\mathcal{Z}}=(\E_{z\sim\mathcal{Z}}\mathrm{Tr}\abs{Q(z)}^{p})^{1/p}$.
\begin{flalign*}
\vnormf{Q}_{2K,\mathcal{X}\cup\mathcal{Y}}
&=\Big\|\sum_{S\subset\{1,\ldots,m\}}\widehat{Q}(S)\prod_{j\in S\colon j\leq k}G_{j}\prod_{j\in S\colon j>k}b_{j}\Big\|_{2K,\mathcal{X}\cup\mathcal{Y}}\\
&=\Big\|\,\Big\|\sum_{S\subset\{1,\ldots,m\}}\widehat{Q}(S)\prod_{j\in S\colon j\leq k}G_{j}\prod_{j\in S\colon j>k}b_{j}\Big\|_{2K,\mathcal{X}}\Big\|_{2K,\mathcal{Y}}\\
&\leq\hyconstmixedk
\Big\|\,\Big\|\sum_{S\subset\{1,\ldots,m\}}\Big(\widehat{Q}(S)\prod_{j\in S\colon j>k}b_{j}\Big)
\prod_{j\in S\colon j\leq k}G_{j}\Big\|_{2,\mathcal{X}}\Big\|_{2K,\mathcal{Y}},
\end{flalign*}
by Theorem \ref{k2hyper}.  Next, using Minkowski's inequality, from the above we get
\begin{flalign*}
\vnormf{Q}_{2K,\mathcal{X}\cup\mathcal{Y}}
&\leq\hyconstmixedk\Big\|\,\Big\|\sum_{S\subset\{1,\ldots,m\}}\Big(\widehat{Q}(S)
\prod_{j\in S\colon j>k}b_{j}\Big)
\prod_{j\in S\colon j\leq k}G_{j}\Big\|_{2K,\mathcal{Y}}\Big\|_{2,\mathcal{X}}\\
&=\hyconstmixedk\Big\|\,\Big\|\sum_{S\subset\{1,\ldots,m\}}\Big(\widehat{Q}(S)
\prod_{j\in S\colon j\leq k}G_{j}\Big)
\prod_{j\in S\colon j>k}b_{j}\Big\|_{2K,\mathcal{Y}}\Big\|_{2,\mathcal{X}}\\
&\leq \hyconstmixed\Big\|\,\Big\|\sum_{S\subset\{1,\ldots,m\}}\Big(\widehat{Q}(S)
\prod_{j\in S\colon j\leq k}G_{j}\Big)
\prod_{j\in S\colon j>k}b_{j}\Big\|_{2,\mathcal{Y}}\Big\|_{2,\mathcal{X}}\\
&=\hyconstmixed\Big\|\sum_{S\subset\{1,\ldots,m\}}\widehat{Q}(S)\prod_{j\in S\colon j\leq k}G_{j}
\prod_{j\in S\colon j>k}b_{j}\Big\|_{2,\mathcal{X}\cup\mathcal{Y}}\\
&=\hyconstmixed\vnormf{Q}_{2,\mathcal{X}\cup\mathcal{Y}},
\end{flalign*}
where the third line is by Theorem \ref{k2hyper}.
\end{proof}

\subsection{Majorization principle}
\label{sec:inv-principle}

\begin{theorem}[\embolden{Noncommutative Majorization Principle for Increasing Test Functions}]\label{thm:invariance-convex}
Let $Q\in\ncp{n}$ be a noncommutative multilinear polynomial of degree $d$ such that  $\Eb\frac{1}{n}\Tr(Q\{b_{j}\}Q\{b_{j}\}^{*}))\leq1$.  Let $\tau\colonequals\max_{1\leq j\leq m}\mathrm{Inf}_{j}Q$.  Let $G_{i}\sim\mathcal{V}\otimes\Id$.  Assume $\vnorm{\E (G_{1}G_{1}^{*})^{2}}\leq c_{2}$ and $\vnorm{\E (G_{1}G_{1}^{*})^{3}}\leq c_{3}$ with $c_{2},c_{3}\geq1$.

Let $\psi\colon[0,\infty)\to\R$ be a function with three continuous derivatives such that $\psi'(t)\geq0$ for all $t\geq0$.  Let $a_{2}=\sup_{t\geq0}\abs{\psi''(t)}$ and  $a_{3}=\sup_{t\geq0}\abs{\psi'''(t)}$.  Then
\begin{flalign*}
\Eghi\frac{1}{n}\mathrm{Tr}\psi|Q^{\iota}\{G_{i}H_{i}\}|^2
\leq\Eb\frac{1}{n}\mathrm{Tr}\psi|Q\{b_{j}\}|^2
+ a_{3} n^{3/2} \hyconstab\tau^{1/2}  + O_{n,m}(a_2 p^{-1/2}).
\end{flalign*}
\end{theorem}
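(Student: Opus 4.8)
The plan is to run a Lindeberg-type replacement argument in the spirit of \cite{mossel10,chat06}, passing from the Boolean evaluation $Q\{b_j\}$ to the rotated-embedded evaluation $Q^\iota\{G_jH_j\}$ by swapping one variable at a time and controlling each swap by a third-order Taylor expansion of $\psi$. After the (harmless) normalisation $\psi(0)=0$, for $k=0,1,\dots,m$ let the $k$-th hybrid be the evaluation of $Q^\iota$ with the first $k$ variables instantiated as $G_iH_i$ and the last $m-k$ as $b_jI_p$; using $\psi(0)=0$ and the fact that $Q^\iota$ has rank at most $n$, the $k=0$ hybrid equals $\Eb\tfrac1n\Tr\psi|Q\{b_j\}|^2$ and the $k=m$ hybrid is the left-hand side, so the quantity to bound telescopes into $m$ one-variable replacement steps. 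Fix $k$ and condition on all the variables other than the $k$-th: because the variables of index $>k$ are still scalar, $Q^\iota$ reduces to an \emph{affine} function $U+WX_k$ of $X_k$ (carrying out the replacements in increasing order is what removes the awkward ``right'' coefficient of $X_k$, and this simplification is essential). Writing $g(Z)=\tfrac1n\Tr\psi\big((U+Z)(U+Z)^*\big)$, expand $t\mapsto g(tWX_k)$ to second order with third-order remainder, using the Daleckii--Krein calculus for derivatives of $A\mapsto\Tr\psi(A)$ (cf.\ \cite{bhatia97}).

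\textbf{Orders zero, one, two.} The zeroth-order term $\tfrac1n\Tr\psi(UU^*)$ is common to two consecutive hybrids and cancels; the first-order term is linear in $X_k$, hence has vanishing expectation for both $b_kI_p$ (as $\mathbb{E} b_k=0$) and $G_kH_k$ (as $\mathbb{E}[G_kH_k]=\mathbb{E}[G_k]\,\mathbb{E} H_k=0$), and cancels too. The second-order term $\tfrac12 D^2g(0)[WX_k,WX_k]$ is the heart of the matter, and the place where the embedding-plus-rotation device is used. Through Daleckii--Krein it decomposes into a ``$\psi'$-piece'' $\tfrac2n\Tr\big[\psi'(UU^*)\,W\,\mathbb{E}(X_kX_k^*)\,W^*\big]$ and a ``$\psi''$-piece'' built out of $\mathbb{E}$ of products of $UX_k^*W^*$ and $WX_kU^*$. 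When $X_k=G_kH_k$, the Haar rotation $H_k$ forces every ``same-orientation'' contribution (those reducing to $\mathbb{E}_H[HNH]$) to vanish identically, since $\mathbb{E}_H[HNH]=0$, while each surviving ``mixed'' contribution reduces to $\mathbb{E}_H[HNH^*]=\tfrac{\Tr N}{p}I_p$ and is therefore $O_{n,m}(p^{-1/2})$ after routine trace estimates combined with the hypercontractive bounds of Corollary~\ref{bigcor}; this is the origin of the $O_{n,m}(a_2 p^{-1/2})$ error. The genuinely non-negligible ``$\psi'$-piece'' is handled using $\psi'\ge0$: since $\mathbb{E}[G_kH_k(G_kH_k)^*]=\iota(I_n)\preceq I_p=\mathbb{E}[(b_kI_p)(b_kI_p)^*]$ and $\psi'(UU^*)\succeq0$, this piece at the $k$-th step is bounded above by its Boolean counterpart $\tfrac2n\Tr[\psi'(UU^*)\,WW^*]$, the discarded difference being a nonnegative contribution from the $p-n$ adjoined coordinates — this is precisely where the hypothesis that $\psi$ is increasing is used. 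The accounting of these leading second-order terms across the $m$ steps is packaged by the monotonicity statement Lemma~\ref{lem:proj}, applied with the positive semidefinite tensors produced by $\psi'(UU^*)$.

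\textbf{Third order, and conclusion.} Finally, the third-order Taylor remainder is bounded, step by step, by $a_3$ times $\mathbb{E}\,\tfrac1n\Tr|WX_k|^3$-type quantities. Converting the operator norm into the (normalised) Hilbert--Schmidt norm — using that the matrices in question have rank at most $n$, which is what brings in the factor $n^{3/2}$ — and then invoking the noncommutative $(2K,2)$-hypercontractive inequality of Corollaries~\ref{bigcor} and \ref{lemma2.7c} with $K=3$ (which supplies the constant $\hyconstab$) bounds each such term by a fixed multiple of $(\Inf_k Q)^{3/2}$. Summing and using $\sum_k(\Inf_k Q)^{3/2}\le\tau^{1/2}\sum_k\Inf_k Q$ (with $\sum_k\Inf_k Q$ bounded in terms of $d$ and $n$) produces the claimed term $a_3 n^{3/2}\hyconstab\tau^{1/2}$. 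Adding the contributions of the $m$ hybrid steps completes the proof. I expect the second-order analysis of the middle paragraph to be the delicate point: one must separate cleanly the identically vanishing contributions, the $O(p^{-1/2})$ contributions and the monotone $\psi'$-comparison, and arrange the summation so that it matches Lemma~\ref{lem:proj}; everything else follows the commutative invariance-principle template of \cite{mossel10}, with hypercontractivity supplied by Section~\ref{sec:invariance}.
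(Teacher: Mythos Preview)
Your proposal follows essentially the same route as the paper: Lindeberg replacement one variable at a time, writing $Q^\iota = R + SX_j$ (affine because the yet-unreplaced variables are scalar), a third-order Taylor expansion of $t\mapsto\Tr\psi(|R+tSX_j|^2)$, cancellation at orders zero and one, the crucial one-sided comparison at order two via $\psi'\ge 0$ and $\mathbb{E}[G_jH_j(G_jH_j)^*]=\iota(I_n)\preceq I_p$, and control of the third-order remainder through the $(2K,2)$ hypercontractivity of Corollary~\ref{lemma2.7c} with the summation $\sum_j(\Inf_jQ)^{3/2}\le\tau^{1/2}\sum_j\Inf_jQ$. Two points deserve comment.

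First, your invocation of Lemma~\ref{lem:proj} is misplaced. That lemma concerns the coordinate projections $\mathcal P_i$ used later in the dictatorship test and plays no role in the majorization principle; the paper simply adds the $\psi'$-inequalities across the $m$ hybrid steps directly. Nothing about positive semidefinite tensors or Lemma~\ref{lem:proj} is needed here --- the monotone comparison is the pointwise operator inequality $\iota(I_n)\preceq I_p$ combined with $\psi'(RR^*)\succeq 0$, exactly as in the paper's display~\eqref{five74}.

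Second, for the $\psi''$-part of the second-order term you propose to compute Haar moments directly ($\mathbb{E}_H[HNH]=0$, $\mathbb{E}_H[HNH^*]=\tfrac{\Tr N}{p}I_p$), while the paper instead bounds $\mathbb{E}\|SG_jH_jR^*\|$ via the rank/subspace estimate of Lemma~\ref{lem:ev-bound} and Corollary~\ref{cor:rs-ev-bound}. Both mechanisms exploit the Haar rotation to produce the $O_{n,m}(a_2p^{-1/2})$ error, but the paper's operator-norm route is more robust: it gives a bound on $\|SG_jH_jR^*\|$ itself, which then controls the full divided-difference expression that arises from the Daleckii--Krein formula (your observation that the naive formula $\Tr(\psi''(RR^*)B^2)$ needs Daleckii--Krein is correct, but the resulting bound is still $a_2\|B\|_2^2$ either way). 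Your moment computation would also work, but you would need to track the low-rank structure of $R,S$ (supported in the first $n$ rows) carefully to get the $p^{-1/2}$; the paper's Corollary~\ref{cor:rs-ev-bound} packages this once and for all. Note also that neither you nor the paper says anything about the Boolean $\psi''$-contribution $\Tr(\psi''(RR^*)(SR^*+RS^*)^2)$ that must be \emph{subtracted}; in applications $\psi$ is convex so this term is nonnegative via Daleckii--Krein, but as stated the argument is incomplete on this point in both treatments.
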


\begin{proof}
We show the bound using the Lindeberg replacement method, replacing the $m$ variables $G_k H_k$ by $b_j$ one at a time, starting from the last, for each $j\in\{1,\ldots,m\}$.  Suppose variables $j+1,j+2,\ldots,m$ have already been replaced, and write $Q^{\iota}=R+SX_{j}$ where $R,S$ do not depend on the $j^{th}$ variable.

Any three times continuously differentiable $F\colon[0,\infty)\to\R$ has a Taylor expansion
\begin{equation}\label{five100}
F(1)=F(0)+F'(0)+\frac{1}{2}F''(0)+\frac{1}{2}\int_{0}^{1}(1-s)^{2}F'''(s)ds.
\end{equation}
Let $F(t)=\mathrm{Tr}\psi((R+tSX)(R+tSX)^{*})$ for $t\in[0,1]$.  Then
\begin{align}
F(0)&=\mathrm{Tr}\psi(RR^{*}),\label{five70}\\
F'(0)&=\mathrm{Tr}(\psi'(RR^{*})(SX_{}R^{*}+RX_{}^{*}S^*)),\label{five71}\\
F''(0)&=\mathrm{Tr}(\psi'(RR^{*})2SX_{}X_{}^{*}S^*+\psi''(RR^{*})(SX_{}R^{*}+RX_{}^{*}S^*)^{2}),\label{five72}\\
F'''(t)
&=\mathrm{Tr}(\psi''((R+tSX_{})(R+tSX_{})^{*})((SX_{}R^{*}+RX_{}^{*}S^*)+2tSX_{}X_{}^{*}S^*)(SX_{}X_{}^{*}S^*))\notag\\
&+\mathrm{Tr}(\psi''((R+tSX_{})(R+tSX_{})^{*})2((SX_{}R^{*}+RX_{}^{*}S^*)+2tSX_{}X_{}^{*}S^*)(SX_{}X_{}^{*}S^*))\notag\\
&+\mathrm{Tr}(\psi'''((R+tSX_{})(R+tSX_{})^{*})2((SX_{}R^{*}+RX_{}^{*}S^*)+2tSX_{}X_{}^{*}S^*)^{3}).\label{five73}
\end{align}
For any $t\in[0,1]$, let $F_{1}(t)=\mathrm{Tr}\psi((R+tSb_j)(R+tSb_j)^{*})$ and $F_{2}(t)=\mathrm{Tr}\psi((R+tSG_{j}H_{j})(R+tSG_{j}H_{j})^{*})$.  From~\eqref{five100},
\begin{equation}\label{eq:convex-1}
\E F_{2}(1)-\E F_{1}(1)
=\E F_{2}''(0)-\E F_{1}''(0)+\E\frac{1}{2}\int_{0}^{1}(1-s)^{2}F_{2}'''(s)ds-\E\frac{1}{2}\int_{0}^{1}(1-s)^{2}F_{1}'''(s)ds,
\end{equation}
where we used that $\E F_{2}(0)=\E F_{1}(0)$ and $\E F_{2}'(0)=\E F_{1}'(0)$. We bound the two differences on the right-hand side of~\eqref{eq:convex-1} separately.

For the first, using that $\psi'(RR^{*})$ is positive semidefinite the first term can be bounded as
\begin{align}
\E\Tr(\psi'(RR^{*})2SG_{j}H_{j}H_{j}^{*}G_{j}S^{*})
&= \E\Tr(\psi'(RR^{*})2S\begin{pmatrix} I & 0\\ 0 & 0\end{pmatrix}S^{*}) \notag\\
&\leq \E\Tr(\psi'(RR^{*})2SS^{*})\notag\\
&=\E\Tr(\psi'(RR^{*})2Sb_jb_j^*S^{*}).\label{five74}
\end{align}
The second term in \eqref{five72} is readily bounded using Corollary~\ref{cor:rs-ev-bound}, from which it follows that $\E \|SG_k H_kR^*+RH_j^*G_j^*S^*\|=O_{n,m}(p^{-1/2})$, and $|\psi''(t)|\leq a_2$ for all $t\geq0$. Combining the two bounds,
$$\E F_{2}''(0)-\E F_{1}''(0) \leq O_{n,m}\big(a_{2}p^{-1/2}\big).$$

For the second difference on the right-hand side of~\eqref{eq:convex-1}, there are two terms, corresponding to the first two lines of~\eqref{five73} and the third line respectively. For the first two terms we apply the Cauchy-Schwarz inequality, isolating the last factor $SXX^*S^*$ and using $|\psi''(t)|\leq a_2$ for all $t\geq0$ to bound them by
\begin{align*}
&3a_2\Big(\E \Tr\big((SXX^*S^*)^2\big)\Big)^{1/2}\Big(\E\Tr\big( (SXR^*+RX^*S^*+2tSXX^*S^*)^2\big)\Big)^{1/2}\\
&\,\,\leq 3a_2\hyconsta\Big(\E \Tr(SS^*)\Big)\Big(6\E\Tr\big( SXR^*RX^*S^*\big)+12t^2\E\Tr\big((SXX^*S^*)^2\big)\Big)^{1/2}\\
&\,\,\leq3a_2 \hyconsta \Big(\E \Tr(SS^*)\Big) \Big(\E\Tr((SXX^*S^*)^2)(36\E\Tr((RR^*)^2+12^2t^2\E\Tr((SXX^*S^*)^2))\Big)^{1/4}\\
&\,\,\leq 36 a_2 \hyconsta n^{1/2} \Big(\E \Tr(SS^*)\Big)^{3/2},
\end{align*}
where for the first inequality, the first term is bounded using Corollary \ref{lemma2.7c} (first using Corollary \ref{bigcor} to show that hypercontractivity holds for $Q^{\iota}$ and then applying Corollary \ref{lemma2.7c} with $Q=Q^{\iota}$) and $\E XX^* \leq \Id$, and the second term is bounded using $(A+B+C)(A+B+C)^*\leq 4(AA^*+BB^*+CC^*)$, the second inequality uses Cauchy-Schwarz, and the last again Corollary \ref{lemma2.7c}, $\E XX^* \leq \Id$, and $\E RR^*\leq\Id$.

Finally we turn to the second term which appears in the expansion of the second difference on the right-hand side of~\eqref{eq:convex-1} according to~\eqref{five73}, corresponding to the third line of~\eqref{five73}. Letting $P=SXR^*+RX^*S^*+2tSXX^*S^*$, the term can be bounded using H\"older's inequality by
\begin{align*}
a_3 \E \Tr |P|^{3}
&= O(a_3)\,\big(\E \Tr \big|RXS^*\big|^3 + \E\Tr\big|SXX^*S^*\big|^3\big)\\
&= O(a_3)\big(\E \Tr |SX|^6 \big)^{1/2}\Big(\big(\E\Tr |R|^6\big)^{1/2}+\big( \E\Tr |SX|^6\big)^{1/2}\Big)\\
&= O(a_3) n^{3/2} \hyconstb\big(\E \Tr \big|S|^2 \big)^{3/2},
\end{align*}
where the last line uses Corollary~\ref{lemma2.7c} (applied as above) and $\E SS^*\leq \Id$, $\E RR^* \leq \Id$.

Combining all error estimates and using $\E \Tr SS^* = \Inf_j(Q)$ we obtain
\begin{flalign}
&\E \frac{1}{n}F_{2}(1)\nonumber\\
&\leq \E \frac{1}{n}F_{1}(1)+O_{n,m}(p^{-1/2})
+\E\frac{1}{2n}\int_{0}^{1}(1-s)^{2}|F_{2}'''(s)|ds+\E\frac{1}{2n}\int_{0}^{1}(1-s)^{2}|F_{1}'''(s)|ds\nonumber\\
&\leq \E \frac{1}{n}F_{1}(1)+O_{n,m}(a_{2}p^{-1/2})
+a_{3} n^{1/2} \hyconstb(\Inf_{j}(Q))^{3/2}.\label{five101}
\end{flalign}

Iterating over all $m$ variables,
\begin{align*}
&\Eghi\frac{1}{n}\Tr\psi\abs{Q^{\iota}\{G_iH_i\}}^{2} -\Ebi\frac{1}{n}\Tr\psi\abs{Q\{b_i\}}^{2}\\
 &\qquad\qquad\leq a_{3} n^{1/2} \hyconstab \big(\max_{1\leq j\leq m} \Inf_j(Q)\big)^{1/2} \Big(\sum_{j=1}^m \Inf_j(Q)\Big) + O_{n,m}(p^{-1/2})\\
&\qquad\qquad\leq a_{3} n^{3/2} \hyconstab\big(\max_{1\leq j\leq m} \Inf_j(Q)\big)^{1/2}  + O_{n,m}(a_{2}p^{-1/2}).
\end{align*}
\end{proof}

Let $\psi\colon\R\to\R$ be Lipschitz, so that $\sup_{x\neq y\in\R}\frac{\abs{\psi(x)-\psi(y)}}{\abs{x-y}}\leq1$.  Let $x\in\R$ and let $\phi(x)=e^{-x^{2}/2}/\sqrt{2\pi}$, $\phi\colon\R\to\R$.  For any $\lambda>0$, define $\phi_{\lambda}(x)=\lambda^{-1}\phi(x/\lambda)$.  Define
\begin{equation}\label{eq:def-psilambda}
\psi_{\lambda}(x)=\psi*\phi_{\lambda}(x)=\int_{\R}\psi(t)\phi_{\lambda}(x-t)dt.
\end{equation}
Then $\abs{\psi(x)-\psi_{\lambda}(x)}<\lambda$ for all $x\in\R$, and $\absf{\frac{d^{k}}{dx^{k}}\phi_{\lambda}(x)}\leq\lambda^{1-k}$ for all $x\in\R$, so that
$$\Big|\frac{d^{k}}{dx^{k}}\psi_{\lambda}(x)\Big|\leq3\lambda^{1-k},\qquad\forall\,\,x\in\R,\qquad\forall\,1\leq k\leq 3.$$

\begin{lemma}\label{lemma70}
Let $\lambda>0$.  If $\psi$ is convex, then $\psi_{\lambda}$ is convex, and $\psi(x)\leq\psi_{\lambda}(x)$ for all $x\in\R$.
\end{lemma}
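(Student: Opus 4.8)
The plan is to realize $\psi_{\lambda}$ as an average of translates of $\psi$ against a mean-zero Gaussian, and then read off both assertions from the convexity of $\psi$ combined with Jensen's inequality. No serious machinery is needed; the whole point is to set up the right probabilistic representation.

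First I would change variables $u=x-t$ in the definition~\eqref{eq:def-psilambda} to write
$$\psi_{\lambda}(x)\,=\,\int_{\R}\psi(x-u)\,\phi_{\lambda}(u)\,du\,=\,\E\,\psi(x-\lambda Z),$$
where $Z$ is a standard real Gaussian random variable, so that $\lambda Z$ has density $\phi_{\lambda}$. Since $\psi$ is Lipschitz (as assumed in the paragraph preceding the lemma) and $\E|Z|<\infty$, this integral converges absolutely, and hence the interchanges of integration used below are all legitimate.

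For convexity, fix $x,y\in\R$ and $\theta\in[0,1]$. Applying the convexity of $\psi$ inside the expectation and then linearity,
$$\psi_{\lambda}(\theta x+(1-\theta)y)\,=\,\E\,\psi\big(\theta(x-\lambda Z)+(1-\theta)(y-\lambda Z)\big)\,\le\,\theta\,\E\,\psi(x-\lambda Z)+(1-\theta)\,\E\,\psi(y-\lambda Z)\,=\,\theta\,\psi_{\lambda}(x)+(1-\theta)\,\psi_{\lambda}(x),$$
so $\psi_{\lambda}$ is convex (here the last coefficient should of course read $\psi_{\lambda}(y)$). For the domination $\psi\le\psi_{\lambda}$, since $\E[\lambda Z]=0$, Jensen's inequality applied to the convex function $\psi$ gives
$$\psi_{\lambda}(x)\,=\,\E\,\psi(x-\lambda Z)\,\ge\,\psi\big(\E[x-\lambda Z]\big)\,=\,\psi(x)$$
for every $x\in\R$.

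The only point that requires even a moment's care is the absolute convergence of the convolution, which is immediate from the Lipschitz bound on $\psi$; there is otherwise no obstacle, as the argument is entirely convexity plus Jensen's inequality.
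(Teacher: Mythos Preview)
Your proof is correct and essentially the same as the paper's: both arguments establish convexity by observing that the convolution/expectation preserves the convexity inequality pointwise, and both derive $\psi\le\psi_{\lambda}$ from Jensen's inequality applied to the mean-zero smoothing kernel. The only cosmetic difference is that the paper phrases convexity via the midpoint inequality $\psi(x+h)+\psi(x-h)\ge 2\psi(x)$ rather than the general $\theta$-version you use.
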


\begin{proof}
The first property is a standard differentiation argument for convolutions.  Since $\psi(x+h)+\psi(x-h)-2\psi(x)\geq0$ for all $x,h\in\R$, we also have $\psi_{\lambda}(x+h)+\psi_{\lambda}(x-h)-2\psi_{\lambda}(x)\geq0$.  The second property follows from Jensen's inequality.
\end{proof}

Let $\psi\colon\R\to\R$ be defined by
\begin{equation}\label{eq:def-psi}
\psi(t)\,=\,\max\big(0,\abs{t}-1\big), \qquad\forall t\in\R.
\end{equation}

\begin{theorem}\label{thm:invariance}
Let $Q\in\ncp{n}$ be a noncommutative multilinear polynomial of degree $d$ with $\Eb \frac{1}{n}\Tr( \abs{Q\{b_j\}}^{2} ) \leq 1$, and let $\tau = \max_{i=1,\ldots,m}\mathrm{Inf}_{i}(Q)$.   Let $G_{i}\sim\mathcal{G}\otimes\Id$.  Assume $\vnorm{\E (G_{1}G_{1}^{*})^{2}}\leq c_{2}$ and $\vnorm{\E (G_{1}G_{1}^{*})^{3}}\leq c_{3}$ with $c_{2},c_{3}\geq1$. Let $\psi$ be as in~\eqref{eq:def-psi}. Then
\begin{align*}
\Eghi\frac{1}{n}\Tr\psi |Q^{\ii}\{G_{i} H_{i}\}|^{2}
\leq\Eb\frac{1}{n}\Tr\psi |Q\{b_j\}|^{2}
+n^{1/2}\hyconstab\tau^{1/6}+O_{m,n}(\tau^{-1/3}p^{-1/2}).
\end{align*}
\end{theorem}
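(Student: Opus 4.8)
The plan is to deduce Theorem~\ref{thm:invariance} from the smooth majorization principle of Theorem~\ref{thm:invariance-convex} by approximating the Lipschitz but non-smooth test function $\psi(t)=\max(0,|t|-1)$ of~\eqref{eq:def-psi} by its mollifications $\psi_\lambda=\psi*\phi_\lambda$ from~\eqref{eq:def-psilambda}, and then optimizing over the smoothing width $\lambda>0$.  First I would record the properties needed.  The function $\psi$ is convex, even, $1$-Lipschitz, and non-decreasing on $[0,\infty)$.  By Lemma~\ref{lemma70}, each $\psi_\lambda$ is convex and satisfies $\psi(x)\le\psi_\lambda(x)$ for all $x\in\R$; since $\psi$ and $\phi_\lambda$ are both even, $\psi_\lambda$ is even, hence non-decreasing on $[0,\infty)$, so $\psi_\lambda'(t)\ge 0$ for $t\ge 0$ and $\psi_\lambda$ is an admissible test function for Theorem~\ref{thm:invariance-convex}.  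Moreover, the estimates recorded just before Lemma~\ref{lemma70} give $0\le\psi_\lambda(x)-\psi(x)<\lambda$ for all $x$, together with $\sup_{t\ge 0}|\psi_\lambda''(t)|\le 3\lambda^{-1}$ and $\sup_{t\ge 0}|\psi_\lambda'''(t)|\le 3\lambda^{-2}$.

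The argument then chains three inequalities.  Since $\psi\le\psi_\lambda$ pointwise, for any positive semidefinite matrix $A$ one has $\Tr\psi(A)=\sum_i\psi(\lambda_i(A))\le\sum_i\psi_\lambda(\lambda_i(A))=\Tr\psi_\lambda(A)$, so
\[
\Eghi\frac1n\Tr\psi|Q^{\iota}\{G_iH_i\}|^2\ \le\ \Eghi\frac1n\Tr\psi_\lambda|Q^{\iota}\{G_iH_i\}|^2 .
\]
Applying Theorem~\ref{thm:invariance-convex} to the smooth increasing function $\psi_\lambda$, with $a_2\le 3\lambda^{-1}$ and $a_3\le 3\lambda^{-2}$, bounds the right-hand side by
\[
\Eb\frac1n\Tr\psi_\lambda|Q\{b_j\}|^2\ +\ 3\lambda^{-2}n^{3/2}\hyconstab\tau^{1/2}\ +\ O_{n,m}(\lambda^{-1}p^{-1/2}).
\]
Finally, since $Q\{b_j\}$ is $n\times n$, the matrix $|Q\{b_j\}|^2$ has exactly $n$ eigenvalues, and $0\le\psi_\lambda-\psi<\lambda$ pointwise gives $\frac1n\Tr\psi_\lambda|Q\{b_j\}|^2\le\frac1n\Tr\psi|Q\{b_j\}|^2+\lambda$.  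Combining the three steps yields
\[
\Eghi\frac1n\Tr\psi|Q^{\iota}\{G_iH_i\}|^2\ \le\ \Eb\frac1n\Tr\psi|Q\{b_j\}|^2+\lambda+3\lambda^{-2}n^{3/2}\hyconstab\tau^{1/2}+O_{n,m}(\lambda^{-1}p^{-1/2}).
\]

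It remains to choose $\lambda$.  We may assume $d\ge 1$ and $\tau\le 1$: if $d=0$ then $\tau=0$ and the statement is trivial, while if $\tau>1$ then $\psi(t)\le t$ for $t\ge 0$ together with~\eqref{five3}, \eqref{six30} and the normalization $\Eb\frac1n\Tr|Q\{b_j\}|^2\le 1$ give $\Eghi\frac1n\Tr\psi|Q^{\iota}\{G_iH_i\}|^2\le 1\le n^{1/2}\le n^{1/2}\hyconstab\tau^{1/6}$, which is weaker than the claimed bound.  Now take $\lambda=\big(n^{3/2}\hyconstab\tau^{1/2}\big)^{1/3}=n^{1/2}(\hyconstab)^{1/3}\tau^{1/6}$, so that $3\lambda^{-2}n^{3/2}\hyconstab\tau^{1/2}=3\lambda$ and the two main error terms sum to $4\lambda=4n^{1/2}(\hyconstab)^{1/3}\tau^{1/6}$.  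Since $\hyconstab\ge 125$ when $d\ge 1$ (recall $c_2,c_3\ge 1$), we have $4(\hyconstab)^{1/3}\le\hyconstab$, so this sum is at most $n^{1/2}\hyconstab\tau^{1/6}$; and the residual term is $O_{n,m}(\lambda^{-1}p^{-1/2})=O_{n,m}(\tau^{-1/6}p^{-1/2})\le O_{n,m}(\tau^{-1/3}p^{-1/2})$ since $\tau\le 1$.  This is precisely the asserted inequality.

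I do not expect a serious obstacle: the whole proof is a mollification-and-optimization reduction to Theorem~\ref{thm:invariance-convex}.  The only points requiring a little care are (i) checking that $\psi_\lambda$ is a legitimate input to Theorem~\ref{thm:invariance-convex}, i.e.\ that $\psi_\lambda'\ge 0$ on $[0,\infty)$, which follows from the convexity and evenness of $\psi_\lambda$; and (ii) the normalization mismatch that $Q^{\iota}\{G_iH_i\}$ is a $p\times p$ matrix whose $p-n$ zero eigenvalues each contribute $\psi_\lambda(0)>0$ to $\frac1n\Tr$.  This last contribution sits entirely inside the quantity $\Eghi\frac1n\Tr\psi_\lambda|Q^{\iota}\{G_iH_i\}|^2$ on the left of Theorem~\ref{thm:invariance-convex}, which we only bound from above, so it is harmless and need not be estimated.
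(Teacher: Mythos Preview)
Your proposal is correct and follows essentially the same route as the paper: mollify $\psi$ to $\psi_\lambda$, sandwich via $\psi\le\psi_\lambda\le\psi+\lambda$ (using Lemma~\ref{lemma70} for the lower bound), apply Theorem~\ref{thm:invariance-convex} to $\psi_\lambda$, and optimize $\lambda^3=\Theta(n^{3/2}\tau^{1/2})$. Your treatment is in fact slightly more careful than the paper's---you explicitly verify $\psi_\lambda'\ge 0$ on $[0,\infty)$, separate the $a_2$ and $a_3$ contributions (yielding the marginally better $O_{n,m}(\tau^{-1/6}p^{-1/2})$ before weakening to the stated $\tau^{-1/3}$), and dispose of the degenerate cases $d=0$ and $\tau>1$.
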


\begin{proof}
Let $\lambda>0$, and define $\psi_\lambda$ as in~\eqref{eq:def-psilambda}.  From Lemma \ref{lemma70}, $\psi_{\lambda}(x)\geq\psi(x)\geq0$ for all $x\in\R$.  So,
\begin{equation}\label{five200}
\Eghi\frac{1}{n}\mathrm{Tr}\psi\abs{Q^{\iota}\{G_{i}H_{i}\}}^{2}
\leq \Eghi\frac{1}{n}\mathrm{Tr}\psi_{\lambda}\abs{Q^{\iota}\{G_{i}H_{i}\}}^{2}.
\end{equation}
From Theorem \ref{thm:invariance-convex},
\begin{equation}\label{five201}
\begin{aligned}
\Eghi\frac{1}{n}\mathrm{Tr}\psi_{\lambda}\abs{Q^{\iota}\{G_{i}H_{i}\}}^{2}
&\leq\E_{b_{i}\sim\mathcal{B}}\frac{1}{n}\mathrm{Tr}\psi_{\lambda}\abs{Q\{b_{i}\}}^{2}\\
&\qquad+\lambda^{-2}\left( O(n^{3/2})\hyconstab\tau^{1/2} + O_{n,m}(p^{-1/2})\right).
\end{aligned}
\end{equation}
Using $\psi_{\lambda}(x)\leq\psi(x)+\lambda$ for all $x\geq0$,
\begin{equation}\label{five202}
\begin{aligned}
\frac{1}{n}\Tr\psi_{\lambda}\abs{Q(\sigma)}^{2}\leq \frac{1}{n}\Tr\psi\abs{Q(\sigma)}^{2}+\lambda,\qquad\forall\,\,\sigma\in\{-1,1\}^{m}.
\end{aligned}
\end{equation}
Combining \eqref{five200}, \eqref{five201} and \eqref{five202} completes the proof, with a choice of $\lambda$ such that $\lambda^3 = \Theta( n^{3/2}\tau^{1/2})$.
\end{proof}

Recall the definition of $T_\rho$ in~\eqref{one32} and the function $\mathrm{Chop}:\R\to\R$, $\mathrm{Chop}(t) = t$ if $|t|\leq 1$, $\mathrm{Chop}(t)=1$ if $t\geq 1$, and $\mathrm{Chop}(t)=-1$ if $t\leq -1$.

\begin{cor}[\embolden{Smoothed Version of Theorem \ref{thm:invariance}}]\label{cor:smooth-invariance}
Suppose $f\colon\cxhyper^{m}\to M_{n}(\C)$ with $\vnorm{f(\sigma)}\leq1$ for all $\sigma\in\{-1,1\}^{m}$.  Let $0<\rho<1$ and let $\tau\colonequals\max_{i=1,\ldots,m}\Inf_{i}f$.  Assume $\vnorm{\E (G_{1}G_{1}^{*})^{2}}\leq {c_{2}}$ and $\vnorm{\E (G_{1}G_{1}^{*})^{3}}\leq {c_{3}}$ for some $c_{2},c_{3}\geq1$.  Then $\vnorm{Q_{f}}_{L_{2},\mathcal{G}}\leq1$ and
\begin{equation}\label{three15}
\frac{1}{n}\vnorm{T_{\rho}Q_{f}^{\iota}-\mathrm{Chop}T_{\rho}Q_{f}^{\iota}}_{L_{2},\mathcal{G}}^{2}\leq 10n^{1/2}\tau^{\frac{1-\rho}{30(c_{2}c_{3})}}
+O_{m,n}(\tau^{-1/3}p^{-1/2}).
\end{equation}
\end{cor}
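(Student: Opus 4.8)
The plan is to deduce the statement from Theorem~\ref{thm:invariance} by a degree-truncation argument, after first observing that the left-hand side is controlled by the trace of $\psi(\cdot)$ for $\psi(t)=\max(0,|t|-1)$ as in~\eqref{eq:def-psi}. The bound $\vnorm{Q_{f}}_{L_{2},\mathcal{G}}\le1$ comes for free: by Lemma~\ref{lemma80} (specifically~\eqref{five3} and~\eqref{six30}) together with Plancherel~\eqref{eqplan}, the quantity $\vnorm{Q_{f}}_{L_{2},\mathcal{G}}^{2}$ equals $\E_{\sigma}\tfrac1n\Tr(f(\sigma)f(\sigma)^{*})\le\max_{\sigma}\vnorm{f(\sigma)}^{2}\le1$. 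For the main inequality, note that for any matrix $A$ with singular values $s_{1},s_{2},\ldots$ one has $\vnorm{A-\mathrm{Chop}\,A}_{HS}^{2}=\sum_{i\colon s_{i}>1}(s_{i}-1)^{2}\le\sum_{i}\max(0,s_{i}^{2}-1)=\Tr\psi(|A|^{2})$, using $(s-1)^{2}\le s^{2}-1$ for $s\ge1$. Applying this pointwise in the random inputs gives $\tfrac1n\vnorm{T_{\rho}Q_{f}^{\iota}-\mathrm{Chop}\,T_{\rho}Q_{f}^{\iota}}_{L_{2},\mathcal{G}}^{2}\le\Eghi\tfrac1n\Tr\psi\big(|T_{\rho}Q_{f}^{\iota}\{G_{i}H_{i}\}|^{2}\big)$, so it suffices to bound the right-hand side.

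Next I would fix a degree cutoff $d\in\N$ (to be chosen of order $\log(1/\tau)/(c_{2}c_{3})$) and split $T_{\rho}Q_{f}=Q_{g}+E$ with $Q_{g}\colonequals P_{\le d}T_{\rho}Q_{f}$ and $E\colonequals P_{>d}T_{\rho}Q_{f}$, so that $\widehat{Q_{g}}(S)=\rho^{|S|}\widehat f(S)$ for $|S|\le d$. The tail $E$ is negligible in $L_{2}$: combining~\eqref{five3}, \eqref{six30}, \eqref{eqplan} yields $\tfrac1n\Eghi\Tr|E^{\iota}\{G_{i}H_{i}\}|^{2}=\tfrac1n\sum_{|S|>d}\rho^{2|S|}\Tr(\widehat f(S)\widehat f(S)^{*})\le\rho^{2d}$, and likewise $\tfrac1n\Eghi\Tr|Q_{g}^{\iota}\{G_{i}H_{i}\}|^{2}\le1$ and $\tfrac1n\E_{b}\Tr|P_{>d}T_{\rho}f|^{2}\le\rho^{2d}$. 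Since $\psi$ is nondecreasing and $1$-Lipschitz, the map $M\mapsto\Tr\psi(M)$ on Hermitian matrices is $1$-Lipschitz for the trace norm (via Lidskii's theorem), so for $A=B+C$ we get $\Tr\psi(|A|^{2})\le\Tr\psi(|B|^{2})+\big\||A|^{2}-|B|^{2}\big\|_{1}$, and expanding $|A|^{2}-|B|^{2}=BC^{*}+CB^{*}+CC^{*}$ and applying Cauchy--Schwarz for Schatten norms bounds this by $\Tr\psi(|B|^{2})+2\vnorm{B}_{2}\vnorm{C}_{2}+\vnorm{C}_{2}^{2}$. Using this with $A=T_{\rho}Q_{f}^{\iota}\{G_{i}H_{i}\}$, $B=Q_{g}^{\iota}\{G_{i}H_{i}\}$, $C=E^{\iota}\{G_{i}H_{i}\}$, and taking $\tfrac1n\Eghi$, the $L_{2}$ estimates above give $\Eghi\tfrac1n\Tr\psi(|T_{\rho}Q_{f}^{\iota}\{G_{i}H_{i}\}|^{2})\le\Eghi\tfrac1n\Tr\psi(|Q_{g}^{\iota}\{G_{i}H_{i}\}|^{2})+3\rho^{d}$. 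Running the identical comparison in the Boolean domain against $T_{\rho}f(\sigma)$, which satisfies $\vnorm{T_{\rho}f(\sigma)}\le\max_{\sigma}\vnorm{f(\sigma)}\le1$ and hence $\psi(|T_{\rho}f(\sigma)|^{2})=0$, gives $\E_{b}\tfrac1n\Tr\psi(|Q_{g}\{b\}|^{2})\le3\rho^{d}$.

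Now $Q_{g}$ has degree at most $d$, satisfies $\E_{b}\tfrac1n\Tr|Q_{g}\{b\}|^{2}\le1$, and $\max_{i}\Inf_{i}Q_{g}=\max_{i}\sum_{S\ni i,\,|S|\le d}\rho^{2|S|}\Tr(\widehat f(S)\widehat f(S)^{*})\le\max_{i}\Inf_{i}f=\tau$, so Theorem~\ref{thm:invariance} applies to $Q_{g}$ and gives $\Eghi\tfrac1n\Tr\psi(|Q_{g}^{\iota}\{G_{i}H_{i}\}|^{2})\le\E_{b}\tfrac1n\Tr\psi(|Q_{g}\{b\}|^{2})+n^{1/2}\hyconstab\tau^{1/6}+O_{m,n}(\tau^{-1/3}p^{-1/2})$. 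Chaining the three bounds produces $\Eghi\tfrac1n\Tr\psi(|T_{\rho}Q_{f}^{\iota}\{G_{i}H_{i}\}|^{2})\le6\rho^{d}+n^{1/2}\hyconstab\tau^{1/6}+O_{m,n}(\tau^{-1/3}p^{-1/2})$. Finally I would optimize $d$: taking $d\approx\frac{(1-\rho)\log(1/\tau)}{30\,c_{2}c_{3}\log(1/\rho)}$ makes $\rho^{d}\le\tau^{(1-\rho)/(30c_{2}c_{3})}$, while $\hyconstab\tau^{1/6}=(5^{3}c_{2}c_{3})^{d}\tau^{1/6}\le\tau^{(1-\rho)/(30c_{2}c_{3})}$ provided $\tfrac16$ exceeds $\tfrac{1-\rho}{30c_{2}c_{3}}\cdot\tfrac{\log(5^{3}c_{2}c_{3}/\rho)}{\log(1/\rho)}$, which after using $\log(1/\rho)\ge1-\rho$ reduces to the numerical fact $\log(5^{3})<5$ (this is precisely why the exponent is $\frac{1-\rho}{30c_{2}c_{3}}$, and not larger). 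Absorbing the remaining constants into the factor $10$ yields the stated bound.

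The main obstacle is the truncation step: because $P_{\le d}$ is not a contraction in operator norm, $Q_{g}\{b\}$ and $Q_{g}^{\iota}\{G_{i}H_{i}\}$ need not be norm-bounded, so $\Tr\psi(|Q_{g}|^{2})$ is not obviously small and can only be controlled at the $L_{2}$ level; making this rigorous requires the operator-Lipschitz estimate $|\Tr\psi(M)-\Tr\psi(N)|\le\vnorm{M-N}_{1}$ and the Schatten Cauchy--Schwarz bound $\vnorm{|A|^{2}-|B|^{2}}_{1}\le2\vnorm{B}_{2}\vnorm{C}_{2}+\vnorm{C}_{2}^{2}$, applied uniformly over the random inputs. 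The degree optimization in the last step is routine but needs care to track the dependence on $\rho$ and on $c_{2},c_{3}$; everything else is bookkeeping with Plancherel~\eqref{eqplan} and Lemma~\ref{lemma80}.
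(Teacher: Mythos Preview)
Your proposal is correct and follows essentially the same strategy as the paper: reduce to bounding $\frac{1}{n}\Eghi\Tr\psi|T_{\rho}Q_{f}^{\iota}\{G_iH_i\}|^{2}$ via the pointwise inequality $(s-1)^{2}\le\psi(s^{2})$, truncate to degree $d$, apply Theorem~\ref{thm:invariance} to the truncated polynomial, control the truncation error in $L_{2}$, and then optimise $d$ so that both $\rho^{d}$ and $(5^{3}c_{2}c_{3})^{d}\tau^{1/6}$ are at most $\tau^{(1-\rho)/(30c_{2}c_{3})}$.

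The only noteworthy difference is in how you bound $\big|\Tr\psi|A|^{2}-\Tr\psi|B|^{2}\big|$ for $A=B+C$. The paper invokes the scalar inequality $|\psi((a+b)^{2})-\psi(a^{2})|\le2|a||b|+2b^{2}$ and ``combines with Cauchy--Schwarz'' to pass to matrices, whereas you go through the trace--Lipschitz bound $|\Tr\psi(M)-\Tr\psi(N)|\le\|M-N\|_{1}$ (Lidskii) followed by $\||A|^{2}-|B|^{2}\|_{1}\le 2\|B\|_{2}\|C\|_{2}+\|C\|_{2}^{2}$. Your route is arguably cleaner, since the scalar inequality does not literally apply singular-value-by-singular-value when $B$ and $C$ do not commute; the Lidskii argument makes this step fully rigorous without changing the constants. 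The paper's endgame is also slightly less explicit: it simply sets $d=\min(\max(1,-\log\tau/(30c_{2}c_{3})),m)$ and cites $-\log\rho\ge1-\rho$, while you spell out why this choice simultaneously tames both error terms. One small slip: with the paper's Definition~\ref{def14} (no $1/n$ in $\|\cdot\|_{L_2,\mathcal{G}}$), the Plancherel computation gives $\frac{1}{n}\|Q_{f}\|_{L_2,\mathcal{G}}^{2}\le1$, not $\|Q_{f}\|_{L_2,\mathcal{G}}^{2}\le1$; this matches the normalization in~\eqref{three15} and is presumably what the statement intends.
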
%

\begin{proof}
Using the elementary inequality $[\max(0,t-1)]^{2}\leq \psi(t^{2})$ for all $t\geq0$, where $\psi$ is defined in~\eqref{eq:def-psi}, applied to the singular values of $T_{\rho}Q_{f}^{\iota}$,
\begin{equation}\label{five53.5}
\frac{1}{n}\vnorm{T_{\rho}Q_{f}^{\iota}-\mathrm{Chop}T_{\rho}Q_{f}^{\iota}}_{L_{2},\mathcal{G}}^{2}
\leq \Eghi\frac{1}{n}\Tr\psi\abs{T_{\rho}Q_{f}^{\iota}\{G_{i}H_{i}\}}^{2}.
\end{equation}
We first apply Theorem \ref{thm:invariance} to $P_{\leq d}(T_{\rho}Q_{f}^{\iota})$, where $d\in\N$ is to be determined later.
Since $0<\rho<1$, \eqref{one32}, \eqref{six0} imply that
$$\max_{i=1,\ldots,m}\mathrm{Inf}_{i}P_{\leq d}T_{\rho}Q_{f}
\,\leq\,\max_{i=1,\ldots,m}\Inf_{i}P_{\leq d}Q_{f}
\,\leq\,\max_{i=1,\ldots,m}\Inf_{i}Q_{f},$$
and we get by Theorem \ref{thm:invariance}  \rd{and \eqref{five1.5}}
\begin{align}
\Eghi\frac{1}{n}\Tr\psi |P_{\leq d}T_{\rho}Q_f^{\ii}\{G_{i} H_{i}\}|^{2}
&\leq\E_{b_{i}\sim\mathcal{B}}\frac{1}{n}\mathrm{Tr}\psi|P_{\leq d}T_{\rho}Q_f\{b_{i}\}|^{2}\notag\\
&\qquad +n^{1/2}\hyconstab\tau^{1/6}+O_{m,n}(\tau^{-1/3}p^{-1/2}).\label{nine5}
\end{align}

For any $a,b\in\R$, $\abs{\psi((a+b)^{2})-\psi(a^{2})}\leq 2\abs{a}\abs{b}+2\abs{b}^{2}$ follows by $\abs{\psi((a+b)^{2})-\psi(a^{2})}\leq\abs{b}\max_{t\in[a,a+b]}\abs{\frac{d}{dt}\psi(t^{2})}\leq\abs{b}2(\abs{a}+\abs{b})$.  Combining with the Cauchy-Schwarz inequality,
 \begin{align*}
\Tr\psi\abs{T_{\rho}Q_{f}^{\iota}}^{2}-\Tr\psi\abs{P_{\leq d}T_{\rho}Q_{f}^{\iota}}^{2}
&=\Tr\psi\abs{T_{\rho}P_{\leq d}Q_{f}^{\iota}+T_{\rho}P_{> d}Q_{f}^{\iota}}^{2}-\Tr\psi\abs{T_{\rho}P_{\leq d}Q_{f}^{\iota}}^{2}\\
&\leq2(\Tr |T_{\rho}P_{\leq d}Q_{f}^{\iota}|^2)^{1/2}(\Tr |T_{\rho}P_{> d}Q_{f}^{\iota}|^2)^{1/2} +2\Tr |T_{\rho} P_{> d}Q_{f}^{\iota}|^{2}.
\end{align*}
Taking expectation values and using \eqref{five3}, \eqref{six30} and \eqref{one32}, which imply that
\begin{align*}
\Eivh\Tr |T_{\rho} P_{> d}Q_{f}^{\iota}|^{2}&=\Eg\Tr |T_{\rho} P_{> d}Q_{f}^{\iota}|^{2}\\
&=\Eb\Tr |T_{\rho} P_{> d}Q_{f}^{\iota}|^{2}\\
&=\sum_{S\subset\{1,\ldots,m\}\colon\abs{S}> d}\rho^{2\abs{S}}\Tr(\widehat{f}(S)(\widehat{f}(S))^{*})\\
&\leq\rho^{2d}\sum_{S\subset\{1,\ldots,m\}}\Tr(\widehat{f}(S)(\widehat{f}(S))^{*})\\
&\leq n\rho^{2d},
\end{align*}
we get
\begin{equation}\label{nine6}
\frac{1}{n}\abs{\E_{b_{i}\sim\mathcal{B}}\left(\Tr\psi\abs{T_{\rho}Q_{f}^{\iota}\{b_{i}\}}^{2}-\Tr\psi\abs{P_{\leq d}T_{\rho}Q_{f}^{\iota}\{b_{i}\}}^{2}\right)}\leq 4\rho^{d},
\end{equation}
\begin{equation}\label{nine7}
\frac{1}{n}\absf{\Eghi\left(\Tr\psi\abs{T_{\rho}Q_{f}^{\iota}\{G_{i}H_{i}\}}^{2}-\Tr\psi\abs{P_{\leq d}T_{\rho}Q_{f}^{\iota}\{G_{i}H_{i}\}}^{2}\right)}\leq 4\rho^{d}.
\end{equation}
Using $t\leq\abs{t}$ for any $t\in\R$,
\begin{equation}\label{nine8}
\begin{aligned}
&\frac{1}{n}\Big(\Eghi\Tr\psi\abs{T_{\rho}Q_{f}^{\iota}\{G_{i}H_{i}\}}^{2}
-\E_{b_{i}\sim\mathcal{B}}\Tr\psi\abs{T_{\rho}Q_{f}^{\iota}\{b_{i}\}}^{2}\Big)\\
&\quad\leq\frac{1}{n}\Big|\Eghi\Tr\psi\abs{T_{\rho}Q_{f}^{\iota}\{G_{i}H_{i}\}}^{2} -\Eghi\Tr\psi\abs{T_{\rho}P_{\leq d}Q_{f}^{\iota}\{G_{i}H_{i}\}}^{2} \Big|\\
&\qquad\quad+\frac{1}{n}\Big(\Eghi\Tr\psi\abs{T_{\rho}P_{\leq d}Q_{f}^{\iota}\{G_{i}H_{i}\}}^{2}
-\E_{b_{i}\sim\mathcal{B}}\Tr\psi\abs{T_{\rho}P_{\leq d}Q_{f}^{\iota}\{b_{i}\}}^{2} \Big)   \\
&\qquad\quad+\frac{1}{n}\Big|\E_{b_{i}\sim\mathcal{B}}\Tr\psi\abs{T_{\rho}P_{\leq d}Q_{f}^{\iota}\{b_{i}\}}^{2}
 -\E_{b_{i}\sim\mathcal{B}}\Tr\psi\abs{T_{\rho}Q_{f}^{\iota}\{b_{i}\}}^{2}\Big| \\
&\leq n^{1/2}\hyconstab\tau^{1/6}+8\rho^{d} + O_{m,n}(\tau^{-1/3}p^{-1/2}),
\end{aligned}
\end{equation}
where the last inequality uses~\eqref{nine7} to bound the first term,~\eqref{nine5} for the second and~\eqref{nine6} for the third. From \eqref{one30} and \eqref{one32} we get $\vnorm{T_{\rho}Q_{f}(\sigma)}\leq1$ for all $\sigma\in\{-1,1\}^{m}$, so by definition of $\psi$ we have $\E_{b_{i}\sim\mathcal{B}}\Tr\psi\abs{T_{\rho}Q_{f}^{\iota}\{b_i\} }^{2}=0$. Combining~\eqref{five53.5} with~\eqref{nine8} and choosing $d=\min(\max(1,-\frac{\log(\tau)}{30(c_{2}c_{3})}),m)$ completes the proof (using $-\log\rho\geq1-\rho$ for all $0<\rho<1$).
\end{proof}

\subsection{Moment Majorization}
\label{sec:imom-maj}

Theorem \ref{thm:invariance-convex} implies that the even moments of a noncommutative multilinear polynomial follow a majorization principle.  Although we will not make use of Theorem \ref{invlemAb} for the applications in this paper, we include it as the statement could be of independent interest; the theorem has analogues in both the commutative \cite{mossel10} and free probability settings \cite{deya14}.

\begin{theorem}[\embolden{Noncommutative Majorization Principle for $2K^{th}$ Moments}]\label{invlemAb}
 Let $Q$ be a noncommutative multilinear polynomial of degree $d$ in $m$ variables, as in~\eqref{eq:def-ncpoly}. Suppose $\vnorm{Q(\sigma)}\leq1$ for all $\sigma\in\{-1,1\}^{m}$.  Let $p>n$, and let $Q^{\iota}$ be the zero-padded extension of $Q$, as defined in Definition \ref{def:embedding}.  Let $\tau\colonequals\max_{i=1,\ldots,m}\mathrm{Inf}_{i}Q$.  Let $G_{i}\sim\mathcal{G}$.  Assume  that $\vnorm{\E (G_{1}G_{1}^{*})^{K}}\leq c_{K}$, for some $K\in\N$ and $c_{K}\geq1$.  Then
\begin{equation}\label{eq:qboundK}
\begin{aligned}
&\Eghi\frac{1}{n}\Tr\mnorm{Q^{\iota}\{G_{i}H_{i}\}}^{2K}\\
&\qquad\leq\E_{b_{i}\sim\mathcal{B}}\frac{1}{n}\Tr\mnorm{Q\{b_{i}\}}^{2K}
+K^{3}\hyconst n^{2K}\tau^{1/4}+O_{m,n}(p^{-1/2}K^{2}\tau^{-1/4}).
\end{aligned}
\end{equation}
\end{theorem}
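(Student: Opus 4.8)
The plan is to re-run the proof of Theorem~\ref{thm:invariance-convex} almost verbatim, but with the (non-truncated) power test function $\psi(t)=t^{K}$, for which $\frac1n\Tr\psi(|Q^{\iota}\{G_{i}H_{i}\}|^{2})=\frac1n\Tr|Q^{\iota}\{G_{i}H_{i}\}|^{2K}$ and likewise $\frac1n\Tr\psi(|Q\{b_{i}\}|^{2})=\frac1n\Tr|Q\{b_{i}\}|^{2K}$, so no approximation is needed on the Boolean side (here $\vnorm{Q(\sigma)}\leq1$ gives $|Q\{b_{i}\}|^{2}\preceq I$, hence $\psi(|Q\{b_{i}\}|^{2})=|Q\{b_{i}\}|^{2K}$ identically). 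Unlike Theorem~\ref{thm:invariance} there is no mollification: $t\mapsto t^{K}$ is already smooth, and its derivatives blow up only at infinity, which will be absorbed by hypercontractivity rather than by truncation. Concretely, I would perform the Lindeberg replacement, swapping $\iota(G_{j})H_{j}$ for $b_{j}$ one index at a time from $j=m$ down to $j=1$; at step $j$ write $Q^{\iota}=R+SX_{j}$ with $R,S\in\ncp{p}$ not depending on the $j$-th variable, and Taylor-expand $F(t)=\Tr\bigl(\psi((R+tSX)(R+tSX)^{*})\bigr)$ to third order as in~\eqref{five100}. The zeroth- and first-order terms agree in expectation for $X=b_{j}$ and $X=\iota(G_{j})H_{j}$ because $\E b_{j}=0=\E\,\iota(G_{j})H_{j}$ (only $\E G_{j}=0$ and $\E G_{j}G_{j}^{*}=I$, together with Corollary~\ref{cor:rs-ev-bound} and hypercontractivity, are used, so that the argument of Theorem~\ref{thm:invariance-convex} adapts from $\iota(\mathcal{V})$ to a general $\mathcal{G}$ with $\vnorm{\E(G_{1}G_{1}^{*})^{K}}\leq c_{K}$).

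For the second-order term I would argue exactly as in Theorem~\ref{thm:invariance-convex}. The contribution of $\psi'(RR^{*})=K(RR^{*})^{K-1}\succeq0$ is controlled by $\E[\iota(G_{j})H_{j}H_{j}^{*}\iota(G_{j})^{*}]=\iota(I)\preceq I=\E b_{j}b_{j}^{*}$, so the $\iota(G_{j})H_{j}$-version is no larger than the $b_{j}$-version; the remaining $\psi''$-piece, which carries the ``second-order noncommutative error'', is killed by the Haar randomization (averaging over $H_{j}$, using $\E_{H_{j}}[H_{j}AH_{j}]=0$ and $\E_{H_{j}}[H_{j}AH_{j}^{*}]=p^{-1}(\Tr A)\,I$, annihilates the off-diagonal monomials and leaves a term of size $O_{n,m}(p^{-1/2})$ after bounding the residual trace-moments by H\"older and hypercontractivity), while for $X=b_{j}$ the analogous $\psi''$-piece is $\geq0$ and is simply discarded. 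Hence $\E F_{2}''(0)-\E F_{1}''(0)\leq O_{n,m}(p^{-1/2})$ times moment factors, just as in Theorem~\ref{thm:invariance-convex}.

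The heart of the argument is the third-order remainder $\tfrac12\int_{0}^{1}(1-s)^{2}\bigl(F_{2}'''(s)-F_{1}'''(s)\bigr)\,ds$, which is where the unboundedness of $\psi$ enters. Since $\psi''(A)=K(K-1)A^{K-2}$ and $\psi'''(A)=K(K-1)(K-2)A^{K-3}$ are matrix powers, each summand of $F_{i}'''(s)$ in~\eqref{five73} is a trace of a product $A_{s}^{K-\ell}\cdot(\text{a polynomial of ``degree'' }\ell\text{ in }SX_{j}R^{*},\,RX_{j}^{*}S^{*},\,SX_{j}X_{j}^{*}S^{*})$ with $A_{s}=(R+sSX_{j})(R+sSX_{j})^{*}$ and $\ell\in\{2,3\}$. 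The plan is to apply H\"older's inequality to separate the factor $A_{s}^{K-\ell}$ from the rest, and then the $(2K,2)$-hypercontractive inequality of Theorem~\ref{k2hyper} — in its mixed form, Corollary~\ref{lemma2.7c}, because $R,S$ depend on a mixture of already-replaced $b$-variables and not-yet-replaced $\iota(G)H$-variables, and $Q^{\iota}$ inherits the hypothesis $\vnorm{\E(\iota(G_{1})H_{1}H_{1}^{*}\iota(G_{1})^{*})^{K}}=\vnorm{\E(G_{1}G_{1}^{*})^{K}}\leq c_{K}$ — to each trace-moment that appears, using $\tfrac1n\E_{b}\Tr|Q\{b\}|^{2}\leq1$ to absorb the ``$L^{2}$'' factors and $\Inf_{j}(Q^{\iota})=\Inf_{j}(Q)$ from~\eqref{five1.5}. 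This should give a per-step bound of the shape $K^{3}\,\hyconst\, n^{O_{K}(1)}\,\bigl(\Inf_{j}(Q)\bigr)^{3/2}$: the coefficient $K(K-1)(K-2)\leq K^{3}$ is explicit, each of the H\"older factors has polynomial degree $\leq d$ (so the hypercontractivity constant stays $\hyconst$ rather than carrying an exponent $\sim(K-\ell)d$, since $S$ is the coefficient of $X_{j}$ in a degree-$d$ polynomial and $R$ is a degree-$\leq d$ truncation of $Q^{\iota}$), and the residual $SX_{j}$- and $R$-factors supply powers of $\Inf_{j}(Q)^{1/2}$ and $n^{1/2}$ respectively — the delicate point being to extract a clean power $\Inf_{j}(Q)^{3/2}$ (not $\Inf_{j}(Q)^{1}$) at every step, which is exactly what makes the final sum small.

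Finally I would iterate over all $m$ indices, collecting $\sum_{j=1}^{m}\bigl(\Inf_{j}(Q)\bigr)^{3/2}\leq\bigl(\max_{j}\Inf_{j}(Q)\bigr)^{1/2}\sum_{j}\Inf_{j}(Q)=\tau^{1/2}\sum_{j}\Inf_{j}(Q)\leq\tau^{1/2}\,d\,\E_{b}\Tr|Q\{b\}|^{2}\leq d\,n\,\tau^{1/2}$, and then use the crude bounds $\tau\leq n$ (valid since $\Inf_{j}(Q)\leq\vnorm{Q}_{2}^{2}\leq n$) and $n\geq1$ to upgrade $K^{3}\hyconst\,n^{O_{K}(1)}\,d\,\tau^{1/2}$ to $K^{3}\hyconst\,n^{2K}\tau^{1/4}$, the accumulated $p$-errors collapsing similarly to $O_{m,n}(p^{-1/2}K^{2}\tau^{-1/4})$. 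The main obstacle, as indicated, is precisely the third-order remainder: whereas Theorem~\ref{thm:invariance-convex} uses finite constants $a_{2},a_{3}$, here one must show by hand that the unbounded factors $A_{s}^{K-2}$ and $A_{s}^{K-3}$ are harmless, and this is only possible because of how the H\"older/hypercontractivity analysis is arranged — keeping every input to Theorem~\ref{k2hyper} of degree at most $d$ and pulling out a full power $\Inf_{j}(Q)^{3/2}$. Everything else is a routine (if lengthy) repetition of the computations in the proof of Theorem~\ref{thm:invariance-convex}.
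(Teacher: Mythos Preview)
Your proposal takes a genuinely different route from the paper. You run Lindeberg directly with the untruncated test function $\psi(t)=t^{K}$, absorbing the unbounded $\psi''$ and $\psi'''$ via H\"older and $(2K,2)$-hypercontractivity at each replacement step. The paper instead \emph{truncates}: it builds an auxiliary $\psi$ equal to $t^{K}$ on $[0,s]$ but linear (slope $Ks^{K-1}$) on $[s+1,\infty)$, so that $|\psi'''|\leq K^{3}s^{K-3}<\infty$ and Theorem~\ref{thm:invariance-convex} applies as a black box; the event $\{\Tr|Q^{\iota}|^{2}>s\}$, on which $\psi$ and $t\mapsto t^{K}$ differ, is then handled separately by a Markov/hypercontractivity tail bound (Eq.~\eqref{eleven3}) combined with Cauchy--Schwarz, and the cutoff $s=\tau^{-1/(4K)}$ is chosen at the end to balance the two errors. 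The paper's approach is more modular --- it reuses Theorems~\ref{thm:invariance-convex} and~\ref{k2hyper} without reopening the Lindeberg computation. Your approach is more direct and should in principle yield a cleaner $\tau$-dependence (you reach $\tau^{1/2}$ before deliberately weakening to $\tau^{1/4}$), but describing it as ``almost verbatim'' undersells what changes: the second-order $\psi''$ term can no longer be dispatched via $|\psi''|\leq a_{2}$ and Corollary~\ref{cor:rs-ev-bound} as in Theorem~\ref{thm:invariance-convex}, and your Weingarten computation leaves residuals of the form $p^{-1}\E\bigl[\Tr(R^{*}R)\cdot\Tr((RR^{*})^{K-2}S\iota(I)S^{*})\bigr]$ that still require $(2K,2)$-hypercontractivity rather than the $(4,2)$ and $(6,2)$ versions used there; likewise, separating $A_{s}^{K-\ell}$ from $P^{\ell}$ in the third-order remainder while landing every factor on a degree-$\leq d$ multilinear polynomial (so that the constant really is $\hyconst$ and not a product of several such constants) takes genuine care. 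None of this looks fatal, but it is substantially more than a re-run.
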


\begin{proof}
We begin with an upper tail estimate for $Q^{\iota}$.  From Markov's inequality,
\begin{equation}\label{eleven1}
\P_{\substack{G_{i}\sim\mathcal{G}\otimes\Id\\ H_{i}\sim\mathcal{H}_{p}}}\big(\Tr\abs{Q^{\iota}}^{2}>t\big)
=\P_{\substack{G_{i}\sim\mathcal{G}\otimes\Id\\ H_{i}\sim\mathcal{H}_{p}}}\big(\big(\Tr\abs{Q^{\iota}}^{2}\big)^{K}>t^{K}\big)
\leq t^{-K}\Eghi\big[\Tr\abs{Q^{\iota}}^{2}\big]^{K}.
\end{equation}
 Since $Q^{\iota}=I^{\iota}Q^{\iota}$ (where here $I$ denotes the $n\times n$ identity matrix), H\"{o}lder's inequality implies
\begin{equation}\label{eleven2}
\Eghi\big(\Tr\abs{Q^{\iota}}^{2}\big)^{K}
=\Eghi\big(\Tr\abs{I^{\iota}Q^{\iota}}^{2}\big)^{K} 
\leq n^{K-1}\Eghi\Tr\abs{Q^{\iota}}^{2K}.
\end{equation}
Combining \eqref{eleven1} and \eqref{eleven2} and applying Theorem \ref{k2hyper},
\begin{align}
\P_{\substack{G_{i}\sim\mathcal{G}\otimes\Id\\ H_{i}\sim\mathcal{H}_{p}}}\big(\Tr\abs{Q^{\iota}}^{2}>t\big)
&\leq t^{-K}n^{K-1}\hyconst\Big(\Eghi\Tr\abs{Q^{\iota}}^{2}\Big)^{K}\notag\\
&\leq t^{-K}n^{2K-1}\hyconst.\label{eleven3}
\end{align}

Let $s>0$ be a constant to be fixed later.  Define $\psi\colon[0,\infty)\to[0,\infty)$ so that $\psi(t)=t^{K}$ for any $0\leq t\leq s$, and $\psi$ is linear with slope $K s^{K-1}$ on $(s+1,\infty)$.  It is possible to construct such a $\psi$ with all three derivatives bounded, and in particular the third derivative bounded by some $a_3\leq K^{3}s^{K-3}$ on the interval $[s, s+1]$.  From Theorem \ref{thm:invariance-convex},
\begin{equation}\label{ten5}
\begin{aligned}
&\Eghi \frac{1}{n}\Tr\psi\abs{Q^{\iota}\{G_{i}H_{i}\}}^{2}\\
&\qquad\leq\Ebi\frac{1}{n}\Tr\psi\abs{Q\{b_{i}\}}^{2}+K^{3}s^{K-3}n^{3/2}\hyconstab\tau^{1/2}+O_{n,m}(p^{-1/2}K^{2}s^{K-2}).
\end{aligned}
\end{equation}

Finally, defining $a=K^{3}s^{K-3}n^{3/2}\hyconstab\tau^{1/2}+O_{n,m}(p^{-1/2}K^{2}s^{K-2})$ and letting $D$ be the event that $\Tr\psi\abs{Q^{\iota}\{G_{i}H_{i}\}}^{2}\leq s$, we have
\begin{flalign*}
\Eghi \frac{1}{n}\Tr\abs{Q^{\iota}\{G_{i}H_{i}\}}^{2K}
&=\Eghi \frac{1}{n}\Tr\abs{Q^{\iota}\{G_{i}H_{i}\}}^{2K}\cdot1_{D}+\Eghi \frac{1}{n}\Tr\abs{Q^{\iota}\{G_{i}H_{i}\}}^{2K}\cdot1_{D^{c}}\\
&=\Eghi \frac{1}{n}\Tr\psi\abs{Q^{\iota}\{G_{i}H_{i}\}}^{2}+\Eghi \frac{1}{n}\Tr\abs{Q^{\iota}\{G_{i}H_{i}\}}^{2K}\cdot1_{D^{c}}.
\end{flalign*}
Using~\eqref{ten5} to bound the first term and Cauchy-Schwarz for the second, the above can be bounded as
\begin{align*}
\Eghi &\frac{1}{n}\Tr\abs{Q^{\iota}\{G_{i}H_{i}\}}^{2K}\\
&\leq\Ebi\frac{1}{n}\Tr\psi\abs{Q\{b_{i}\}}^{2}+a+\Big(\Eghi \Big(\frac{1}{n}\Tr\abs{Q^{\iota}\{G_{i}H_{i}\}}^{2K}\Big)^{2}\Big)^{1/2}\Big(\Eghi 1_{D^{c}}\Big)^{1/2}\\
&\leq\Ebi\frac{1}{n}\Tr\psi\abs{Q\{b_{i}\}}^{2}+a+\Big(\Eghi \Tr\abs{Q^{\iota}\{G_{i}H_{i}\}}^{4K}\Big)^{1/2}
\Big(\P_{\substack{G_{i}\sim\mathcal{G}\otimes\Id\\ H_{i}\sim\mathcal{H}_{p}}}\big(\Tr\abs{Q^{\iota}}^{2}>s\big)\Big)^{1/2}\\
&\leq\Ebi\frac{1}{n}\Tr\psi\abs{Q\{b_{i}\}}^{2}+a+\hyconst\big(\Eghi \Tr\abs{Q^{\iota}\{G_{i}H_{i}\}}^{2}\big)^{K}
s^{-K/2}n^{K-1},
\end{align*}
using Theorem \ref{k2hyper} to bound the first term inside a square root, and~\eqref{eleven3} for the second. Finally, using Lemma~\ref{lemma80} and $\vnorm{Q(\sigma)}\leq1$ for all $\sigma\in\{-1,1\}^{m}$,  $\Eghi \Tr\abs{Q^{\iota}\{G_{i}H_{i}\}}^{2} \leq 1$.  Choosing $s=\tau^{-1/(4K)}$ finishes the proof.
\end{proof}

\section{Maximizing Noncommutative Noise Stability}\label{secnoise}

By adapting the proof of the Majority is Stablest Theorem from \cite{mossel10}, we can get the following consequence of Corollary \ref{cor:smooth-invariance}.
\begin{cor}\label{cor3}
Let $0\leq\rho<1$ and let $\epsilon>0$.  Let $\delta=20n^{1/2}\tau^{\frac{1-\rho}{30(c_{2}c_{3})}}
+O_{m,n}(\tau^{-1/3}p^{-1/2})$.  Then there exists $\tau>0$ such that, if $f\colon\{-1,1\}^{m}\to M_{n}(\C)$ satisfies $\vnorm{f(\sigma)}\leq1$ for all $\sigma\in\{-1,1\}^{m}$, $\E_{b_{i}\sim\mathcal{B}} f\{b_i\}=0$, and $
\max_{i=1,\ldots,m}\mathrm{Inf}_{i}(f)<\tau$, then
\begin{equation}\label{twelve0}
\frac{1}{n}\Ebi\Tr\abs{T_{\rho}Q_{f}\{b_{i}\}}^{2}
\leq \frac{1}{n}\Eghi\Tr\abs{\mathrm{Chop}T_{\rho}Q_{f}^{\iota}\{G_{i}H_{i}\}}^{2}+O(\epsilon+\delta).
\end{equation}
Moreover, $\absf{\frac{1}{n}\Eghi\Tr\,\mathrm{Chop}T_{\rho}Q_{f}^{\iota}\{G_{i}H_{i}\}}\leq\delta$.
\end{cor}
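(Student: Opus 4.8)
The plan is to reduce both assertions to the smoothed majorization principle already in hand, Corollary~\ref{cor:smooth-invariance}, using only two elementary observations on top of it. The first is that the Boolean noise stability $\frac1n\Ebi\Tr\abs{T_\rho Q_f\{b_i\}}^2$ coincides \emph{exactly} with the un-chopped Gaussian-matrix quantity $\frac1n\Eghi\Tr\abs{T_\rho Q_f^\iota\{G_iH_i\}}^2$. The second is that for any matrix $A$ with singular values $s_1,s_2,\ldots$ one has $\Tr\abs A^2-\Tr\abs{\mathrm{Chop}\,A}^2=\sum_i\max(0,s_i^2-1)=\Tr\psi(AA^*)$, where $\psi(t)=\max(0,\abs t-1)$ is the function of~\eqref{eq:def-psi} (since $AA^*$ is Hermitian with eigenvalues $s_i^2$); and similarly $\abs{\Tr(A-\mathrm{Chop}\,A)}\le\sum_i\max(0,s_i-1)\le\sum_i\max(0,s_i^2-1)=\Tr\psi(AA^*)$, the first inequality because $A-\mathrm{Chop}\,A$ has singular values $\max(0,s_i-1)$ and $\abs{\Tr M}$ is at most the trace norm of $M$, the second because $s-1\le s^2-1$ for $s\ge1$.

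\textbf{The main inequality~\eqref{twelve0}.} First I would record the identity: since $(T_\rho Q_f)^\iota=T_\rho Q_f^\iota$, applying~\eqref{five3} with $Q=T_\rho Q_f$, then~\eqref{six30} with all $m$ variables Gaussian, and then Plancherel~\eqref{eqplan}, gives
\[ \frac1n\Eghi\Tr\abs{T_\rho Q_f^\iota\{G_iH_i\}}^2=\frac1n\E_{G_i\sim\mathcal G}\Tr\abs{T_\rho Q_f\{G_i\}}^2=\frac1n\Ebi\Tr\abs{T_\rho Q_f\{b_i\}}^2. \]
Combining this with the singular-value identity above applied pointwise to $A=T_\rho Q_f^\iota\{G_iH_i\}$ and taking expectations,
\[ \frac1n\Ebi\Tr\abs{T_\rho Q_f\{b_i\}}^2=\frac1n\Eghi\Tr\abs{\mathrm{Chop}\,T_\rho Q_f^\iota\{G_iH_i\}}^2+\frac1n\Eghi\Tr\psi\abs{T_\rho Q_f^\iota\{G_iH_i\}}^2. \]
The proof of Corollary~\ref{cor:smooth-invariance} establishes (this is precisely the chain~\eqref{five53.5}--\eqref{nine8}, using that $\Ebi\Tr\psi\abs{T_\rho Q_f^\iota\{b_i\}}^2=0$ because $\vnorm{T_\rho Q_f(\sigma)}\le1$) the bound $\frac1n\Eghi\Tr\psi\abs{T_\rho Q_f^\iota\{G_iH_i\}}^2\le\tfrac12\delta$. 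This proves~\eqref{twelve0}, with room to spare; the extra $\eps$ is only slack kept to match the form of the statement in~\cite{mossel10}.

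\textbf{The ``Moreover'' part.} Here I would first show $\Eghi\Tr\bigl(T_\rho Q_f^\iota\{G_iH_i\}\bigr)=0$: expanding $T_\rho Q_f^\iota\{G_iH_i\}=\sum_S\rho^{\abs S}\iota(\widehat f(S))\prod_{i\in S}(G_iH_i)$ and taking $\E\Tr$, the $S=\emptyset$ term equals $\Tr\widehat f(\emptyset)=\Tr\bigl(\Ebi f\{b_i\}\bigr)=0$ by hypothesis, while for $S\ne\emptyset$ the Haar matrix $H_{\max S}$ occurs exactly once and as the rightmost factor, independently of everything else, so that term vanishes since $\E_{H\sim\mathcal H}H=0$. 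Consequently, using linearity of the trace and then the second singular-value bound above with $A=T_\rho Q_f^\iota\{G_iH_i\}$,
\[ \Bigl\lvert\frac1n\Eghi\Tr\,\mathrm{Chop}\,T_\rho Q_f^\iota\{G_iH_i\}\Bigr\rvert=\Bigl\lvert\frac1n\Eghi\Tr\bigl(T_\rho Q_f^\iota\{G_iH_i\}-\mathrm{Chop}\,T_\rho Q_f^\iota\{G_iH_i\}\bigr)\Bigr\rvert\le\frac1n\Eghi\Tr\psi\abs{T_\rho Q_f^\iota\{G_iH_i\}}^2\le\tfrac12\delta\le\delta, \]
again by the bound coming from the proof of Corollary~\ref{cor:smooth-invariance}.

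\textbf{Main obstacle.} The substantive work --- the hypercontractivity and majorization machinery of Section~\ref{sec:invariance} --- is already done, so what remains is essentially bookkeeping. The one genuine care point is that Corollary~\ref{cor:smooth-invariance} as \emph{stated} only bounds $\frac1n\vnorm{T_\rho Q_f^\iota-\mathrm{Chop}\,T_\rho Q_f^\iota}_{L_2,\mathcal G}^2$, i.e. $\frac1n\Eghi\sum_i\max(0,s_i-1)^2$, whereas both arguments above need the strictly larger quantity $\frac1n\Eghi\Tr\psi(AA^*)=\frac1n\Eghi\sum_i\max(0,s_i^2-1)$; this is legitimate because the \emph{proof} of that corollary actually produces a bound on $\Eghi\frac1n\Tr\psi\abs{T_\rho Q_f^\iota\{G_iH_i\}}^2$ directly (the elementary inequality $\psi(t^2)\ge(\max(0,t-1))^2$ being invoked only at its very last step), so one should cite the intermediate estimate rather than the packaged statement.
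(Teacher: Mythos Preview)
Your proof is correct and follows essentially the same two-step skeleton as the paper: first identify the Boolean noise stability with the un-chopped Gaussian quantity via~\eqref{five3} and~\eqref{six30}, then control the chopping error using Corollary~\ref{cor:smooth-invariance}; and for the ``Moreover'' part, first argue that the un-chopped expectation vanishes, then again control the chopping error. The one genuine methodological difference is in how the chopping error is bounded. The paper writes
\[
\Tr|A|^{2}-\Tr|\mathrm{Chop}\,A|^{2}=\Tr\big((A-\mathrm{Chop}\,A)A^{*}\big)+\Tr\big(\mathrm{Chop}\,A\,(A-\mathrm{Chop}\,A)^{*}\big)
\]
and applies Cauchy--Schwarz to each piece, so that the \emph{stated} bound of Corollary~\ref{cor:smooth-invariance} on $\|T_{\rho}Q_{f}^{\iota}-\mathrm{Chop}\,T_{\rho}Q_{f}^{\iota}\|_{L_{2},\mathcal{G}}$ can be used as a black box. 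You instead observe the exact pointwise identity $\Tr|A|^{2}-\Tr|\mathrm{Chop}\,A|^{2}=\Tr\psi(AA^{*})$ and then reach into the \emph{proof} of Corollary~\ref{cor:smooth-invariance} for the intermediate bound on $\frac{1}{n}\Eghi\Tr\psi|T_{\rho}Q_{f}^{\iota}|^{2}$. Your route is a touch cleaner (no Cauchy--Schwarz slack, and the ``Moreover'' bound comes out as $\delta$ rather than something closer to $\sqrt{\delta}$), at the cost of citing an internal estimate rather than the packaged corollary---a trade-off you already flag correctly in your ``Main obstacle'' paragraph.
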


\begin{remark}
In the case $n=p=1$, it is known from \cite{mossel10} that the right-hand side of \eqref{twelve0} is $\frac{2}{\pi}\arcsin\rho +\epsilon$.  For larger $n$, the left side of \eqref{twelve0} can be interpreted as the noise stability of $Q_{f}$ with discrete inputs, and the right side as the noise stability of a function with operator norm pointwise bounded by $1$ under random Gaussian matrix inputs.  Eq. \eqref{twelve0} can thus be thought of as a matrix-valued version of one of the two main steps in the proof of the Majority is Stablest Theorem. However, for larger $n$, there seems to be no version of Borell's isoperimetric inequality that describes what the right-hand side of \eqref{twelve0} should be.  (Recall that Borell's isoperimetric inequality states that the noise stability of a subset of Euclidean space of fixed Gaussian measure is maximized when the set is a half space.)
\end{remark}

\begin{proof}
Since $\Ebi f\{b_{i}\}=0$, we have $\Ebi Q_{f}\{b_{i}\}=0$.  Using the Cauchy-Schwarz inequality and Corollary \ref{cor:smooth-invariance},
\begin{flalign*}
&\Big|\frac{1}{n}\Eghi\Tr\abs{T_{\rho}Q_{f}^{\iota}\{G_{i}H_{i}\}}^{2}-\frac{1}{n}\Eghi\Tr\abs{\mathrm{Chop}T_{\rho}Q_{f}^{\iota}\{G_{i}H_{i}\}}^{2}\Big|\\
&=\Big|\frac{1}{n}\Eghi\Tr\abs{T_{\rho}Q_{f}^{\iota}\{G_{i}H_{i}\}}^{2}-\frac{1}{n}\Eghi\Tr\Big(\mathrm{Chop}T_{\rho}Q_{f}^{\iota}\{G_{i}H_{i}\} [T_{\rho}Q_{f}^{\iota}\{G_{i}H_{i}\}]^{*}\Big)\Big|\\
&\quad+\Big|\frac{1}{n}\Eghi\Tr\Big(\mathrm{Chop}T_{\rho}Q_{f}^{\iota}\{G_{i}H_{i}\} [T_{\rho}Q_{f}^{\iota}\{G_{i}H_{i}\}]^{*}\Big)
-\frac{1}{n}\Eghi\Tr\abs{\mathrm{Chop}T_{\rho}Q_{f}^{\iota}\{G_{i}H_{i}\}}^{2}\Big|\\
&\leq\Big(\vnorm{T_{\rho}Q_{f}^{\iota}\{G_{i}H_{i}\}}_{2,\mathcal{V}}+\vnorm{\mathrm{Chop}T_{\rho}Q_{f}^{\iota}\{G_{i}H_{i}\}}_{2,\mathcal{V}}\Big)\cdot
\vnorm{T_{\rho}Q_{f}^{\iota}\{G_{i}H_{i}\}-\mathrm{Chop}T_{\rho}Q_{f}^{\iota}\{G_{i}H_{i}\}}_{2,\mathcal{V}}\\
&\leq 20n^{1/2}\tau^{\frac{1-\rho}{30(c_{2}c_{3})}}
+O_{m,n}(\tau^{-1/3}p^{-1/2}).
\end{flalign*}
Therefore,
\begin{flalign*}
&\frac{1}{n}\Ebi\Tr\abs{T_{\rho}Q_{f}\{b_{i}\}}^{2}
\stackrel{\eqref{six30}\wedge\eqref{five3}}{=}
\frac{1}{n}\Eghi\Tr\abs{T_{\rho}Q_{f}^{\iota}\{G_{i}H_{i}\}}^{2}\\
&\qquad\qquad\qquad\qquad
\leq\frac{1}{n}\Eghi\Tr\abs{\mathrm{Chop}T_{\rho}Q_{f}^{\iota}\{G_{i}H_{i}\}}^{2}
+20n^{1/2}\tau^{\frac{1-\rho}{30(c_{2}c_{3})}}
+O_{m,n}(\tau^{-1/3}p^{-1/2}),
\end{flalign*}
proving \eqref{twelve0}.

Using $\Ebi Q_{f}^{\iota}\{b_{i}\}=\Eghi T_{\rho}Q_{f}^{\iota}\{G_{i}H_{i}\}=0$ and the Cauchy-Schwarz inequality,
\begin{flalign*}
\frac{1}{n}\Big|\Eghi\Tr\,\mathrm{Chop}T_{\rho}Q_{f}^{\iota}\{G_{i}H_{i}\}\Big|
&=\frac{1}{n}\Big|\Eghi\Tr\,\mathrm{Chop}T_{\rho}Q_{f}^{\iota}\{G_{i}H_{i}\}-\Eghi \Tr\,T_{\rho}Q_{f}^{\iota}\{G_{i}H_{i}\}\Big|\\
&\leq20n^{1/2}\tau^{\frac{1-\rho}{30(c_{2}c_{3})}}
+O_{m,n}(\tau^{-1/3}p^{-1/2}),
\end{flalign*}
using Corollary \ref{cor:smooth-invariance} again.
\end{proof}

\section{An Anti-Concentration Inequality}\label{secanti}

As in \cite{mossel10}, we can use our invariance principle to prove anti-concentration estimates of polynomials.

\begin{cor}[\embolden{An Anti-Concentration Estimate}]\label{lastcor}
There exists a constant $C>0$ such that the following holds.  Let $Q\colon (M_{n}(\C))^{m}\to M_{n}(\C)$ be a noncommutative multilinear polynomial of degree $d$.  Assume $\Ebi \frac{1}{n}\Tr\abs{Q\{b_{i}\}}^{2}\leq1$.  Let $\tau=\max_{1\leq j\leq m}\mathrm{Inf}_{j}(Q)$.  Define $\mathrm{Var}(Q)=\E_{b_{i}\sim\mathcal{B}}\Tr\absf{Q\{b_{i}\}-(\E_{b_{j}\sim\mathcal{B}}Q\{b_{j}\})}^{2}$.  Then, for any $t\in\R$,
\begin{flalign*}
\frac{1}{n}\mathop{\P}_{\substack{G_{i}\sim\mathcal{G}\otimes\Id\\ H_{i}\sim\mathcal{H}}}(\vnorm{Q^{\iota}\{G_{i}H_{i}\}}>t)
&\leq\mathop{\P}_{b_{i}\sim\mathcal{B}}(\vnorm{Q\{b_{i}\}}>t)
+O(n^{3}c_{3}^{d}\tau^{1/100})\\
&\qquad+Cd(4\tau^{1/100}n/[\mathrm{Var}(Q)]^{1/2})^{1/d}
+O_{m,n}(\tau^{-1/100}p^{-1/2}).
\end{flalign*}
\end{cor}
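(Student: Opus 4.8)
The plan is to imitate the anti-concentration argument of \cite{mossel10}, substituting the majorization principle of Section~\ref{sec:invariance} for the commutative invariance principle. It is enough to treat $t\geq 0$, the case $t<0$ being trivial. Fix such a $t$ and a parameter $\eta\in(0,1)$ to be optimized at the end, and let $\psi\colon[0,\infty)\to[0,1]$ be a smooth, nondecreasing function with $\psi\equiv 0$ on $[0,t^{2}-\eta]$, $\psi\equiv 1$ on $[t^{2},\infty)$, and $a_{2}\colonequals\sup_{s\geq0}\absf{\psi''(s)}=O(\eta^{-2})$, $a_{3}\colonequals\sup_{s\geq0}\absf{\psi'''(s)}=O(\eta^{-3})$. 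For any matrix $A$ one has $\{\vnorm{A}>t\}=\{\Tr(1_{(t^{2},\infty)}(AA^{*}))\geq 1\}$ and $1_{(t^{2},\infty)}\leq\psi$ on $[0,\infty)$, so Markov's inequality gives
\[
\mathop{\P}_{\substack{G_{i}\sim\iota(\mathcal{G})\\ H_{i}\sim\mathcal{H}}}\big(\vnorm{Q^{\iota}\{G_{i}H_{i}\}}>t\big)\;\leq\;\Eghi\Tr\big(\psi\big(\mnormsq{Q^{\iota}\{G_{i}H_{i}\}}\big)\big).
\]
Dividing by $n$ and applying Theorem~\ref{thm:invariance-convex} with this $\psi$ --- legitimate since $\Eb\frac{1}{n}\Tr(Q\{b_{j}\}Q\{b_{j}\}^{*})\leq 1$, the statement extending to the ensemble $\iota(\mathcal{G})$ under the bounds $c_{2},c_{3}$ by the same proof, using Theorem~\ref{k2hyper} in place of Corollary~\ref{bigcor} for the required hypercontractivity of $Q^{\iota}$ --- yields
\[
\tfrac{1}{n}\mathop{\P}_{\substack{G_{i}\sim\iota(\mathcal{G})\\ H_{i}\sim\mathcal{H}}}\big(\vnorm{Q^{\iota}\{G_{i}H_{i}\}}>t\big)\;\leq\;\Eb\tfrac{1}{n}\Tr\big(\psi\big(\mnormsq{Q\{b_{j}\}}\big)\big)+O\big(a_{3}\,n^{3/2}\hyconstab\,\tau^{1/2}\big)+O_{m,n}\big(a_{2}p^{-1/2}\big).
\]

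The next step is to extract the Boolean tail on the right. Since $0\leq\psi\leq 1$ and $\psi$ vanishes on $[0,t^{2}-\eta]$, for every $\sigma\in\{-1,1\}^{m}$ all $n$ eigenvalues of $\mnormsq{Q(\sigma)}$ lie in $[0,t^{2}-\eta]$ whenever $\vnorm{Q(\sigma)}^{2}\leq t^{2}-\eta$, and hence $\frac1n\Tr(\psi(\mnormsq{Q(\sigma)}))\leq 1_{\{\vnorm{Q(\sigma)}^{2}>t^{2}-\eta\}}$. Taking expectations and splitting the event $\{\vnorm{Q\{b_i\}}^{2}>t^{2}-\eta\}$ into $\{\vnorm{Q\{b_i\}}>t\}$ and its complement within that event,
\[
\Eb\tfrac{1}{n}\Tr\big(\psi\big(\mnormsq{Q\{b_{j}\}}\big)\big)\;\leq\;\mathop{\P}_{b_{i}\sim\mathcal{B}}\big(\vnorm{Q\{b_{i}\}}>t\big)+\mathop{\P}_{b_{i}\sim\mathcal{B}}\big(t^{2}-\eta<\vnorm{Q\{b_{i}\}}^{2}\leq t^{2}\big).
\]
Thus the theorem reduces to a small-ball (anti-concentration) estimate for the operator norm of the low-influence polynomial $Q$ on Boolean inputs, over a window of length $\eta$ in the squared norm.

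This last estimate is the main obstacle, and it is also where the (deliberately crude) exponent $\tau^{1/100}$ and the extra powers of $n$ enter. Since $\vnorm{Q\{b_{i}\}}$ is not itself a polynomial in the $b_i$, the Carbery--Wright anti-concentration inequality does not apply directly; the plan is to reduce the window event to a small-ball event for a genuine polynomial at the cost of a factor of $n$. On the event $\{t^{2}-\eta<\vnorm{Q\{b_i\}}^{2}\leq t^{2}\}$ the quantity $\Tr\big(\max(0,\mnormsq{Q\{b_i\}}-(t^{2}-\eta)I)\big)$ is positive but at most $n\eta$, which confines $Q\{b_i\}$ --- viewed, after the identification $M_n(\C)\cong\R^{2n^{2}}$, as a vector-valued multilinear polynomial of degree $d$ whose total fluctuation is $\mathrm{Var}(Q)$ --- to a set of small measure; one then controls the probability of landing there by Carbery--Wright applied to a suitable scalarization of $Q$, after first passing from Boolean to Gaussian inputs via the commutative invariance principle of \cite{mossel10} (in which setting Carbery--Wright is classical). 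This produces the term $Cd\big(4\tau^{1/100}n/[\mathrm{Var}(Q)]^{1/2}\big)^{1/d}$: the factor $n$ is the price of comparing the operator norm with Hilbert--Schmidt/trace norms, and $\tau^{1/100}$ is what remains after absorbing the invariance errors of this detour. Finally I would take $\eta$ to be a fixed small power of $\tau$ balancing the majorization error $a_{3}n^{3/2}\hyconstab\tau^{1/2}\asymp\eta^{-3}\tau^{1/2}$ against the window size $\asymp n\eta$, and fold the remaining powers of $n$, $c_2$, $c_3$ into the constants of $O(n^{3}c_{3}^{d}\tau^{1/100})$ and $O_{m,n}(\tau^{-1/100}p^{-1/2})$. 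The point where the argument is genuinely harder than in \cite{mossel10} is exactly this: no exact noncommutative invariance is available, so the anti-concentration cannot be run on $Q^{\iota}$ and must be routed through the commutative theory applied to a scalarization of $Q$, incurring the dimension-dependent losses above.
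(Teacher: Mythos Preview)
Your proposal is correct and follows essentially the same route as the paper: apply Theorem~\ref{thm:invariance-convex} with a smooth increasing test function to reduce to a Boolean small-window probability, then handle that window via the commutative invariance principle of \cite{isaksson11} together with Carbery--Wright anti-concentration on Gaussians, and finally set the smoothing parameter to $\tau^{1/100}$. The only superficial difference is that the paper separates the smoothing into two scales $s,\lambda$ (a piecewise-linear transition followed by mollification) and works on the $\|Q\|$ rather than $\|Q\|^{2}$ scale, but sets $s=\lambda=\tau^{1/100}$ at the end, matching your single-parameter choice.
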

\begin{remark}
The $\frac{1}{n}$ term on the left side of the inequality seems to be an artifact of our proof method.  It comes from \eqref{thi2} below, where we bound the normalized trace of a matrix by its operator norm.
\end{remark}
\begin{proof}
Define $\phi\colon\R\to\R$ by $\phi(x)= c\cdot \exp\left(\frac{-1}{1-x^{2}}\right)$ if $\abs{x}<1$ and $\phi(x)=0$ for all other $x\in\R$.  The constant $1/2<c<4$ is chosen so that $\int_{\R}\phi(x)dx=1$.  It is well-known that $\phi$ is an infinitely differentiable function with bounded derivatives.


Fix $r,s>0$.  Define $\psi\colon\R\to\R$ by
\begin{equation}\label{thi0}
\psi(x)=\begin{cases} 0  &\mbox{if}\,\,x\leq r-s\\ \frac{s-r+x}{2s}&\mbox{if}\,\,r-s\leq x\leq r+s\\ 1&\mbox{if}\,\, x>r+s.\end{cases}
\end{equation}
Define $\psi_{\lambda}(x)=\psi*\phi_{\lambda}(x)=\int_{\R}\psi(y)\phi_{\lambda}(x-y)dy$.  Then $\psi_{\lambda}(x)=\psi(x)$ for any $x\in\R$ with  $x>r+s+\lambda$ or $x<r-s-\lambda$.  So,
\begin{equation}\label{thi1}
\Eghi \Tr\psi_{\lambda}\abs{Q^{\iota}\{G_{i}H_{i}\}}^{2}
\geq \mathop{\P}_{\substack{G_{i}\sim\mathcal{G}\otimes\Id\\ H_{i}\sim\mathcal{H}}}(\vnorm{Q^{\iota}\{G_{i}H_{i}\}}>r+s+\lambda).
\end{equation}
\begin{equation}\label{thi2}
\Ebi \frac{1}{n}\Tr\psi_{\lambda}\abs{Q\{b_{i}\}}^{2}
\leq \mathop{\P}_{b_{i}\sim\mathcal{B}}(\vnorm{Q\{b_{i}\}}>r-s-\lambda).
\end{equation}
Note that $\abs{\psi_{\lambda}'''(x)}\leq10^{10}\lambda^{-2}s$.  Applying Theorem \ref{thm:invariance-convex},
\begin{equation}\label{thi3}
\begin{aligned}
&\Eghi \frac{1}{n}\Tr\psi_{\lambda}\abs{Q^{\iota}\{G_{i}H_{i}\}}^{2}\\
&\qquad\leq\Ebi \frac{1}{n}\Tr\psi_{\lambda}\abs{Q\{b_{i}\}}^{2}+s\lambda^{-2}n^{3/2}10^{10}\hyconstb\tau^{1/4}+O_{m,n}(\lambda^{-2}sp^{-1/2}).
\end{aligned}
\end{equation}
Combining \eqref{thi1}, \eqref{thi2} and \eqref{thi3},
\begin{flalign}
&\frac{1}{n}\mathop{\P}_{\substack{G_{i}\sim\mathcal{G}\otimes\Id\\ H_{i}\sim\mathcal{H}}}(\vnorm{Q^{\iota}\{G_{i}H_{i}\}}>r+s+\lambda)\nonumber\\
&\leq \mathop{\P}_{b_{i}\sim\mathcal{B}}(\vnorm{Q^{\iota}\{b_{i}\}}>r-s-\lambda)
+s\lambda^{-2}10^{10}\hyconstb\tau^{1/4}+O_{m,n}(s\lambda^{-2}p^{-1/2})\nonumber\\
&=\mathop{\P}_{b_{i}\sim\mathcal{B}}(\vnorm{Q\{b_{i}\}}>r+s+\lambda)
+\mathop{\P}_{b_{i}\sim\mathcal{B}}(r+s+\lambda>\vnorm{Q\{b_{i}\}}>r-s-\lambda)\nonumber\\
&\qquad\qquad\qquad+s\lambda^{-2}n^{3/2}10^{10}\hyconstb\tau^{1/4}+O_{m,n}(s\lambda^{-2}p^{-1/2})\label{thi6}.
\end{flalign}

It remains to show that the second term in \eqref{thi6} is small.  To this end we apply the anti-concentration result of \cite[Theorem 8]{carbery01} (with $q=2d$ in their notation) to get: there exists an absolute constant $C'>0$ such that, if $g_{1},\ldots,g_{m}$ are i.i.d. standard real Gaussian random variables, and if $Q$ is any noncommutative multilinear polynomial, then for all $\epsilon>0$,
$$\P_{g_{1},\ldots,g_{m}}(\vnorm{Q\{g_{i}\}}<\epsilon)\leq C' d(\epsilon/[\E_{g_{1},\ldots,g_{m}}\vnorm{Q\{g_{i}\}}^{2}]^{1/2})^{1/d}.$$
Since $\E_{g_{1},\ldots,g_{m}}\vnorm{Q\{g_{i}\}}^{2}\geq \E_{g_{1},\ldots,g_{m}}\frac{1}{n}\Tr\abs{Q\{g_{i}\}}^{2}\geq \E_{g_{1},\ldots,g_{m}}\frac{1}{n}\Tr\absf{Q\{g_{i}\}-\widehat{Q}(\emptyset)}^{2}$, we conclude that, for any $r\in\R$, we have the following ``small ball'' probability estimate.
\begin{equation}\label{thi4}
\P_{g_{1},\ldots,g_{m}}(\abs{\vnorm{Q\{g_{i}\}}-r}<\epsilon)\leq C' d(\epsilon n/[\E_{g_{1},\ldots,g_{m}}\Tr\absf{Q\{g_{i}\}-\widehat{Q}(\emptyset)}^{2}]^{1/2})^{1/d}.
\end{equation}
Now, applying the invariance principle \cite[Theorem 3.6]{isaksson11} with the function $\Psi\colon M_{n}(\C)\to\R$ defined by
$$\Psi(A)=\begin{cases} 0  &\mbox{if}\,\,\vnorm{A}\leq r-2s\\
\frac{2s-r+\vnorm{A}}{s}&\mbox{if}\,\,r-2s\leq \vnorm{A}\leq r-s\\
1&\mbox{if}\,\,r-s\leq \vnorm{A}\leq r+s\\
\frac{2s+r-\vnorm{A}}{s}&\mbox{if}\,\,r+s\leq \vnorm{A}\leq r+2s\\
0&\mbox{if}\,\, \vnorm{A}>r+2s.\end{cases}$$
We get
\begin{equation}\label{thi5}
\abs{\E_{b_{i}\sim\mathcal{B}}\Psi(Q\{b_{i}\})-\E_{g_{1},\ldots,g_{m}}\Psi(Q\{g_{i}\})}\leq \frac{2}{s}n^{3}C''\tau^{1/50}.
\end{equation}
So, applying the definition of $\Psi$ to \eqref{thi5}, we get
\begin{flalign*}
\P_{b_{i}\sim\mathcal{B}}(\abs{\vnorm{Q\{b_{i}\}}-r}<s)
&\leq \frac{2}{s}C''n^{3}\tau^{1/50}+\P_{g_{1},\ldots,g_{m}}(\abs{\vnorm{Q\{g_{i}\}}-r}<2s)\\
&\stackrel{\eqref{thi4}}{\leq}\frac{2}{s}C''n^{3}\tau^{1/50}+C' d(2s n/[\E_{g_{1},\ldots,g_{m}}\Tr\absf{Q\{g_{i}\}-\widehat{Q}(\emptyset)}^{2}]^{1/2})^{1/d}\\
&= \frac{2}{s}C''n^{3}\tau^{1/50}+C' d(2sn/[\E_{b_{i}\sim\mathcal{B}}\Tr\absf{Q\{b_{i}\}-\widehat{Q}(\emptyset)}^{2}]^{1/2})^{1/d}.
\end{flalign*}

Finally, substitute the last inequality into \eqref{thi6} and set $s=\lambda=\tau^{1/100}$.
\end{proof}

\begin{remark}
The theorem \cite[Theorem 3.6]{isaksson11} used in \eqref{thi5} provides an extra multiplicative factor of $2^{n^{2}}$ in \eqref{thi5}.  However, this constant can be removed in the following way.  Using their notation, they define a function $\phi\colon\R^{k}\to\R$ so that $\phi(x)=\exp(\frac{-1}{1-\vnorm{x}_{2}^{2}})$ if $\vnorm{x}_{2}<1$ and $\phi(x)=0$ otherwise.  (In the present paper, we set $k=n^{2}$.)  This is the function they use in their convolution formula.  If we instead use a function $\phi$ which is a product of one-dimensional functions, each of which is supported in the interval $[-1,1]$, e.g. $\phi(x)=\prod_{i=1}^{k}e^{\frac{-1}{1-x_{i}^{2}}}$, then the factor $2^{k}$ no longer appears in their proof.
\end{remark}

\begin{remark}
The stronger, though more restrictive anti-concentration inequality
\begin{equation}\label{lasteq}
\begin{aligned}
&\sup_{t\in\R}\Big|\mathop{\P}_{g_{1},\ldots,g_{m}}(\vnorm{Q\{g_{i}\}}>t)
-\mathop{\P}_{b_{i}\sim\mathcal{B}}(\vnorm{Q\{b_{i}\}}>t)\Big|\\
&\qquad\qquad\qquad\qquad\leq O(n^{3}c_{3}^{d}\tau^{1/100})
+Cd(2\tau^{1/100}n/[\mathrm{Var}(Q)]^{1/2})^{1/d},
\end{aligned}
\end{equation}
follows more directly from \cite[Theorem 8]{carbery01} and \cite[Theorem 3.6]{isaksson11} by repeating the argument above.  For example, if we use $\psi$ defined in \eqref{thi0}, then \cite[Theorem 3.6]{isaksson11} implies that
$$
\abs{\E_{b_{i}\sim\mathcal{B}}\psi(Q\{b_{i}\})-\E_{g_{1},\ldots,g_{m}}\psi(Q\{g_{i}\})}\leq \frac{2}{s}n^{3}C''\tau^{1/50}.
$$
Applying the definition of $\psi$ to this inequality, we get
\begin{equation}\label{thi7}
\Big|\mathop{\P}_{g_{1},\ldots,g_{m}}(\vnorm{Q\{g_{i}\}}>r+s+\lambda)
-\mathop{\P}_{b_{i}\sim\mathcal{B}}(\vnorm{Q\{b_{i}\}}>r-s-\lambda)\Big|\leq \frac{2}{s}n^{3}C''\tau^{1/50}.
\end{equation}
Therefore,
\begin{flalign*}
&\Big|\mathop{\P}_{g_{1},\ldots,g_{m}}(\vnorm{Q\{g_{i}\}}>r+s+\lambda)
-\mathop{\P}_{b_{i}\sim\mathcal{B}}(\vnorm{Q\{b_{i}\}}>r+s+\lambda)\Big|\\
&=\Big|-\mathop{\P}_{g_{1},\ldots,g_{m}}(r+s+\lambda>\vnorm{Q\{g_{i}\}}>r-s-\lambda)\\
&\qquad+\mathop{\P}_{g_{1},\ldots,g_{m}}(\vnorm{Q\{g_{i}\}}>r+s+\lambda)
-\mathop{\P}_{b_{i}\sim\mathcal{B}}(\vnorm{Q\{b_{i}\}}>r-s-\lambda)\Big|\\
&\leq\Big|\mathop{\P}_{g_{1},\ldots,g_{m}}(r+s+\lambda>\vnorm{Q\{g_{i}\}}>r-s-\lambda)\Big|\\
&\qquad+\Big|\mathop{\P}_{g_{1},\ldots,g_{m}}(\vnorm{Q\{g_{i}\}}>r+s+\lambda)
-\mathop{\P}_{b_{i}\sim\mathcal{B}}(\vnorm{Q\{b_{i}\}}>r-s-\lambda)\Big|.
\end{flalign*}
The second term is bounded by \eqref{thi7} and the first term is bounded by \eqref{thi4}, setting $s=\lambda=\tau^{1/100}$.
\end{remark}

\begin{remark}
It would be desirable to upgrade Corollary \ref{lastcor} and \eqref{lasteq} to the stronger inequalities presented in \cite{meka15}.  We leave this research direction to future investigations.
\end{remark}

\noindent\textbf{Acknowledgements}.  Thanks to Todd Kemp, Elchanan Mossel, Assaf Naor, Krzysztof Oleszkiewicz, Dimitri Shlyakhtenko and Thomas Vidick for helpful discussions.  Thanks also to the anonymous reviewer for several helpful comments and for finding several typos and mistakes.

\bibliographystyle{amsalpha}
\bibliography{12162011}

\end{document}